\newcommand{\norm}[1]{\left\|#1\right\|}
\newcommand{\interior}{\mathrm{int}\,}
\newcommand{\reInt}{\mathrm{ri}\,}
\newcommand{\spanVec}{\mathrm{span}\,}
\newcommand{\inProd}[2]{\langle #1 , #2 \rangle }
\newcommand{\PSDcone}[1]{{\mathcal{S}^{#1}_+}}
\newcommand{\stdCone}{ {\mathcal{K}}}
\newcommand{\stdFace}{ \mathcal{F}}
\newcommand{\diag}{{\mathrm{ diag } \,}}
\newcommand{\T}{\top\hspace{-1pt}}               
\renewcommand{\Re}{\mathbb{R}}
\def\dom{{\rm dom}}
\newcommand{\Sd}[1]{{#1}^{\downarrow}}
\newcommand{\orR}[1]{\Re^{#1}_{\downarrow}}
\DeclarePairedDelimiter\abs{\lvert}{\rvert}%
\newcommand{\dist}{ {\mathrm{dist}\,}}
\DeclareMathOperator*{\argmin}{arg\,min}
\newcommand{\mult}{\operatorname{mult}}
\newcommand{\POC}[2]{{\mathcal{K}_{#1}^{#2}}}
\newcommand{\pK}{\POC{p}{n+1}}
\newcommand{\cj}[1]{\stdFace_{#1}^{\Delta}}
\renewcommand{\S}{\mathcal{S}}                    
\newcommand{\OPT}[1]{{#1}_{\mathrm{opt}}}
\newcommand{\RNum}[1]{\uppercase\expandafter{\romannumeral #1\relax}}
\newtheorem{definition}{Definition}[section]
\newtheorem{lemma}[definition]{Lemma}
\newtheorem{proposition}[definition]{Proposition}
\newtheorem{example}[definition]{Example}
\newtheorem{corollary}[definition]{Corollary}
\newtheorem{theorem}[definition]{Theorem}
\newtheorem*{remark}{Remark}
\newtheorem{assumption}{Assumption}
\newtheorem{assumptionAlpha}{Assumption}
\newcommand{\COMM}[2]{{
\ifthenelse{\equal{#1}{AT}}{\color{red}}{
\ifthenelse{\equal{#1}{BFL}}{\color{blue}}}
[#1: #2]
}}
\newcounter{greyFW}
\newcounter{greyFWEl}
\newcommand{\greyIfOne}[2]{
	\ifnumequal{#1}{1}{{\color{gray}{#2}}}{{#2}}
}
\newenvironment{DIFnomarkup}{}{}
\hline\multicolumn{1}{|c|}{}& \multicolumn{2}{c|}{FW} & \multicolumn{2}{c|}{FW EleSym}& \multicolumn{2}{c}{AGM}& \multicolumn{2}{|c|}{AGM EleSym} \\
\hline\multicolumn{1}{|c|}{}& \multicolumn{3}{c|}{FW} & \multicolumn{3}{c|}{FW EleSym} \\
\newcommand{\printExpDataLineDet}{\error\%&%
	\setcounter{greyFW}{0}\setcounter{greyFWEl}{0}%
	\ifdimless{\FWsuccess cm}{90.0 cm}%
	{%
		\stepcounter{greyFW}%
	}{}%
	\ifdimless{\FWElsuccess cm}{90.0 cm}{%
			\stepcounter{greyFWEl}%
		}{}%
	\ifcsvstrcmp{\FWsuccess}{0}{- & - & - &}{
		\greyIfOne{\value{greyFW}}{\npfourdigitnosep\nprounddigits{1}{\numprint{\FWit}$\pm$\numprint{\FWitstd}}}& 
		\greyIfOne{\value{greyFW}}{{\FWx}$\pm${\FWxstd}}&
		\greyIfOne{\value{greyFW}}{\FWabs$\pm$\FWabsstd}&%
	}%
	 \ifcsvstrcmp{\FWElsuccess}{0}{- & - & -}{%
	 			  \greyIfOne{\value{greyFWEl}}{\npfourdigitnosep\nprounddigits{1}{\numprint{\FWElit}$\pm$\numprint{\FWElitstd}}}&%
	 			  \greyIfOne{\value{greyFWEl}}{{\FWElx}$\pm$\FWElxstd}&%
	 			  \greyIfOne{\value{greyFWEl}}{{\FWElabs}$\pm${\FWElabsstd}}%
 			     }%
}
\hline\multicolumn{1}{|c|}{}& \multicolumn{3}{c|}{$\hat c_D$} & \multicolumn{3}{c|}{$2\hat c_D$} & \multicolumn{3}{c|}{$4\hat c_D$} \\
\newcommand{\printExpDatacd}{\error\%&%
	  \ifcsvstrcmp{\cdiFWElsuccess}{0}{- & - & - &}{%
	 	\nprounddigits{2}\numprint{\cdiFWEl} $\pm$ \numprint{\cdiFWElstd}&
	 	\npfourdigitnosep\nprounddigits{1}{\numprint{\cdiFWElit}$\pm$\numprint{\cdiFWElitstd}}&%
	 	\nprounddigits{1}\numprint{\cdiFWElsuccess}&%
	 }%
	 \ifcsvstrcmp{\cdiiFWElsuccess}{0}{- & - & - &}{%
	 	\nprounddigits{2}\numprint{\cdiiFWEl} $\pm$ \numprint{\cdiiFWElstd}&
	 	\npfourdigitnosep\nprounddigits{1}{\numprint{\cdiiFWElit}$\pm$\numprint{\cdiiFWElitstd}}&%
	 	\nprounddigits{1}\numprint{\cdiiFWElsuccess}&%
	 }%
 	 \ifcsvstrcmp{\cdiiiFWElsuccess}{0}{- & - & - }{%
 		\nprounddigits{2}\numprint{\cdiiiFWEl} $\pm$ \numprint{\cdiiiFWElstd}&
 		\npfourdigitnosep\nprounddigits{1}{\numprint{\cdiiiFWElit}$\pm$\numprint{\cdiiiFWElitstd}}&%
 		\nprounddigits{1}\numprint{\cdiiiFWElsuccess}%
 	}%
}
\hline\multicolumn{1}{|c|}{}& \multicolumn{3}{c|}{$8\hat c_D$} & \multicolumn{3}{c|}{$16\hat c_D$} & \multicolumn{3}{c|}{$100\hat c_D$} \\
\newcommand{\printExpDatacdAlt}{\error\%&%
	\ifcsvstrcmp{\cdivFWElsuccess}{0}{- & - & - &}{%
		\nprounddigits{2}\numprint{\cdivFWEl} $\pm$ \numprint{\cdivFWElstd}&
		\npfourdigitnosep\nprounddigits{1}{\numprint{\cdivFWElit}$\pm$\numprint{\cdivFWElitstd}}&%
		\nprounddigits{1}\numprint{\cdivFWElsuccess}&%
	}%
	\ifcsvstrcmp{\cdvFWElsuccess}{0}{- & - & - &}{%
		\nprounddigits{2}\numprint{\cdvFWEl} $\pm$ \numprint{\cdvFWElstd}&
		\npfourdigitnosep\nprounddigits{1}{\numprint{\cdvFWElit}$\pm$\numprint{\cdvFWElitstd}}&%
		\nprounddigits{1}\numprint{\cdvFWElsuccess}&%
	}%
	\ifcsvstrcmp{\cdviFWElsuccess}{0}{- & - & - }{%
		\nprounddigits{2}\numprint{\cdviFWEl} $\pm$ \numprint{\cdviFWElstd}&
		\npfourdigitnosep\nprounddigits{1}{\numprint{\cdviFWElit}$\pm$\numprint{\cdviFWElitstd}}&%
		\nprounddigits{1}\numprint{\cdviFWElsuccess}%
	}%
}
\newcounter{boldFW}
\newcounter{boldFWEl}
\newcommand{\boldIfOne}[2]{
	\ifnumequal{#1}{1}{\textbf{#2}}{{#2}}
}
\newcommand{\boldIfLessThan}[2]{
	\ifdimless{#1 cm}{15 cm}{\textbf{#2}}{{#2}}
}
\newcommand{\printExpDataLine}{\error\%& %
	\setcounter{boldFW}{0}\setcounter{boldFWEl}{0}
	\ifdimgreater{\FWsuccess cm}{99.9 cm}%
	{%
		\ifdimless{\FWElsuccess cm}{99.9 cm}{%
			\stepcounter{boldFW}%
		}%
		{%
			\ifdimless{\FW cm}{\FWEl cm}{\stepcounter{boldFW}}{\stepcounter{boldFWEl}}%
		}%
	}%
	{%
		\ifdimless{\FWElsuccess cm}{99.9 cm}{%
		}%
		{%
			\stepcounter{boldFWEl}%
		}	%
	}%
	\ifcsvstrcmp{\FWsuccess}{0}{- & 0 &}{
		\boldIfOne{\value{boldFW}}{\nprounddigits{2}\numprint{\FW} $\pm$ \numprint{\FWstd}} & \nprounddigits{1}\numprint{\FWsuccess}&}
	\ifcsvstrcmp{\FWElsuccess}{0}{- & 0 &}{
		\boldIfOne{\value{boldFWEl}}{\nprounddigits{2}\numprint{\FWEl} $\pm$ \numprint{\FWElstd}} & \nprounddigits{1}\numprint{\FWElsuccess}&}
	\ifcsvstrcmp{\AGMsuccess}{0}{- & 0 &}{
		\nprounddigits{2}\numprint{\AGM} $\pm$ \numprint{\AGMstd} & \nprounddigits{1}\numprint{\AGMsuccess}&}		
	\ifcsvstrcmp{\AGMElsuccess}{0}{- & 0 }{
		\nprounddigits{2}\numprint{\AGMEl} $\pm$ \numprint{\AGMElstd} & \nprounddigits{1}\numprint{\AGMElsuccess}}%
}
\hline\multicolumn{1}{|c|}{}& \multicolumn{2}{c|}{$p=1.1$} & \multicolumn{2}{c|}{$p=1.3$}& \multicolumn{2}{c}{$p=3$}& \multicolumn{2}{|c|}{$p=5$} \\
\newcommand{\printpConeExpDataLine}{\error\% & 
	\ifcsvstrcmp{\FWisuccess}{0}{- & 0 &}{
		\boldIfLessThan{\FWi}{\nprounddigits{2}\numprint{\FWi} $\pm$ \numprint{\FWistd}} & \nprounddigits{1}\numprint{\FWisuccess}&}
	\ifcsvstrcmp{\FWiisuccess}{0}{- & 0 &}{
		\boldIfLessThan{\FWii}{\nprounddigits{2}\numprint{\FWii} $\pm$ \numprint{\FWiistd}} & \nprounddigits{1}\numprint{\FWiisuccess}&} 
	\ifcsvstrcmp{\FWiiisuccess}{0}{- & 0 &}{
		\boldIfLessThan{\FWiii}{\nprounddigits{2}\numprint{\FWiii} $\pm$ \numprint{\FWiiistd}} & \nprounddigits{1}\numprint{\FWiiisuccess}&}
	\ifcsvstrcmp{\FWivsuccess}{0}{- & 0 }{
		\boldIfLessThan{\FWiv}{\nprounddigits{2}\numprint{\FWiv} $\pm$ \numprint{\FWivstd}} & \nprounddigits{1}\numprint{\FWivsuccess}}
}
\hline\multicolumn{1}{|c|}{}& \multicolumn{3}{c|}{$p=1.1$} & \multicolumn{3}{c|}{$p=1.3$}\\
\newcommand{\printpConeExpDataLineDet}{\error\% & %
	\setcounter{greyFW}{0}\setcounter{greyFWEl}{0}%
	\ifdimless{\FWisuccess cm}{90.0 cm}%
	{
		\stepcounter{greyFW}%
	}{}%
	\ifdimless{\FWiisuccess cm}{90.0 cm}{%
		\stepcounter{greyFWEl}%
	}{}%
	\ifcsvstrcmp{\FWisuccess}{0}{- & - & - &}{
		\greyIfOne{\value{greyFW}}{\npfourdigitnosep\nprounddigits{1}{\numprint{\FWiit}$\pm$\numprint{\FWiitstd}}}& 
		\greyIfOne{\value{greyFW}}{{\FWix}$\pm${\FWixstd}}&
		\greyIfOne{\value{greyFW}}{\FWiabs$\pm$\FWiabsstd}& 
	}
	\ifcsvstrcmp{\FWiisuccess}{0}{- & - & -}{
		\greyIfOne{\value{greyFWEl}}{\npfourdigitnosep\nprounddigits{1}{\numprint{\FWiiit}$\pm$\numprint{\FWiiitstd}}} & 
		\greyIfOne{\value{greyFWEl}}{{\FWiix}$\pm${\FWiixstd}}&
		\greyIfOne{\value{greyFWEl}}{\FWiiabs$\pm$\FWiiabsstd}%
	}	 
}
\hline\multicolumn{1}{|c|}{}& \multicolumn{3}{c|}{$p=3$} & \multicolumn{3}{c|}{$p=5$}\\
\newcommand{\printpConeExpDataLineDetAlt}{\error\% &%
	\setcounter{greyFW}{0}\setcounter{greyFWEl}{0}%
	\ifdimless{\FWiiisuccess cm}{90.0 cm}%
	{
		\stepcounter{greyFW}%
	}{}%
	\ifdimless{\FWivsuccess cm}{90.0 cm}{%
		\stepcounter{greyFWEl}%
	}{}%
	\ifcsvstrcmp{\FWiiisuccess}{0}{- & - & - &}{%
		\greyIfOne{\value{greyFW}}{\npfourdigitnosep\nprounddigits{1}{\numprint{\FWiiiit}$\pm$\numprint{\FWiiiitstd}}}& 
		\greyIfOne{\value{greyFW}}{{\FWiiix}$\pm${\FWiiixstd}}&
		\greyIfOne{\value{greyFW}}{\FWiiiabs$\pm$\FWiiiabsstd}&%
	}
	\ifcsvstrcmp{\FWivsuccess}{0}{- & - & - }{%
		\greyIfOne{\value{greyFWEl}}{\npfourdigitnosep\nprounddigits{1}{\numprint{\FWivit}$\pm$\numprint{\FWivitstd}}}& 
		\greyIfOne{\value{greyFWEl}}{{\FWix}$\pm${\FWivxstd}}&
		\greyIfOne{\value{greyFWEl}}{\FWiabs$\pm$\FWivabsstd}%
	} 
}
\numberwithin{equation}{section}
\begin{document}
\title{Projection onto hyperbolicity cones and beyond: a dual Frank-Wolfe approach}
\author{
	Takayuki Nagano\thanks{
	Department of Mathematical Informatics, Graduate School of Information Science and Technology, University of Tokyo,  7-3-1 Hongo, Bunkyo-ku, Tokyo 113-8656, Japan.
	}
	\and
	Bruno F. Louren\c{c}o%
	\thanks{Department of Fundamental Statistical Mathematics, Institute of Statistical Mathematics, 10-3 Midori-cho, Tachikawa, Tokyo 190-8562, Japan.
		This author was supported partly by the JSPS Grant-in-Aid for Early-Career Scientists  23K16844 and by the Japan Science and Technology Agency (JST) as part of Adopting Sustainable Partnerships for Innovative Research Ecosystem (ASPIRE), Grant Number JPMJAP2520.
		(\texttt{bruno@ism.ac.jp})}
	\and 
	Akiko Takeda\thanks{
		Department of Mathematical Informatics, Graduate School of Information Science and Technology,
	  University of Tokyo,  7-3-1 Hongo, Bunkyo-ku, Tokyo 113-8656, Japan and
          Center  for  Advanced  Intelligence  Project, RIKEN,  1-4-1, Nihonbashi, Chuo-ku, Tokyo  103-0027, Japan.
          This was author was supported partly by the JSPS Grant-in-Aid  	
          for Scientific Research (B)  23H03351 and JST ERATO Grant Number JPMJER1903. (\texttt{takeda@mist.i.u-tokyo.ac.jp}) 
	}
}
\maketitle
\begin{abstract}
We discuss the problem of projecting a point onto an arbitrary hyperbolicity cone from both theoretical and numerical perspectives. 
While hyperbolicity cones are furnished with a generalization of the notion of eigenvalues, obtaining closed form expressions for the projection operator as in the case of semidefinite matrices is an elusive endeavour.
To address that we propose a Frank-Wolfe method to handle this task and, more generally, strongly convex optimization over closed convex cones. One of our innovations is that the Frank-Wolfe method is actually applied to the dual problem and, by doing so, subproblems can be solved in closed-form using minimum eigenvalue functions and conjugate vectors. 
To test the validity of our proposed approach, we present numerical experiments where we check  the performance of alternative approaches including interior point methods and an earlier accelerated gradient method proposed by Renegar. 
We also show numerical examples where the hyperbolic polynomial has millions of monomials. 
Finally, we also discuss the problem of projecting onto $p$-cones which, although not hyperbolicity cones in general, are still amenable to our techniques.  
\end{abstract}

\section{Introduction}\label{sec:int}
Hyperbolicity cones \cite{gaarding1959inequality,guler1997hyperbolic,BGLS01,renegar2004hyperbolic} are a far-reaching family of closed convex cones containing all symmetric cones and all polyhedral cones. 
In particular, the cone  of $n\times n$ real symmetric positive semidefinite matrices $\PSDcone{n}$ is a hyperbolicity cone. 
One distinctive feature of $\PSDcone{n}$ is that the orthogonal projection onto $\PSDcone{n}$ has a well-known expression that can be described in terms of the spectral decomposition of a matrix. 
Analogously, for a hyperbolicity cone there is a natural notion of \emph{eigenvalues} (see Section~\ref{sec:hyp_p}) that is strong enough to allow the extension of certain linear algebraic results about symmetric matrices, e.g., \cite{BGLS01,renegar2004hyperbolic}. 
With this in mind, the starting point of this project was the following questions:
\begin{quote}
	Given an arbitrary hyperbolicity cone $\Lambda \subseteq \Re^n$ and $x \in \Re^n$, how to compute the projection of $x$ onto $\Lambda$ efficiently, i.e., how to compute
	$P_\Lambda(x) \coloneqq \argmin_{y \in \Lambda } \norm{y-x}$?
	Are there closed form expressions in terms of the eigenvalues of $x$ ?
\end{quote}
In a nutshell our answers are as follows. We only found a closed form expression for the projection operator for a rather narrow family of hyperbolicity cones (see Section~\ref{sec:projection} and Proposition~\ref{prop:proj}). For more general cones, numerical methods seem necessary, which we discuss in detail in this paper in Sections~\ref{sec:algorithm} and \ref{sec:numerical_experiment}. 

Before we move on, we take a step back to motivate these questions. 
Besides being an interesting geometric question on its own, the usefulness of having a readily available projection operator for some convex set is well-documented in optimization (e.g., see \cite{HM11}). 
It is, after all, a basic requirement for the applicability of several algorithms. 
For example, augmented Lagrangian methods for solving conic optimization problems typically require the projection operator onto the underlying cone or its dual, e.g., see \cite[Section~2]{SS04}. 
The same is true for operator splitting approaches, e.g., see \cite[Section~3]{OCPB16} and \cite[Section~3.2]{BNZ24} which discuss specifically the case of conic constraints.

Not only that, methods such as \emph{cyclic projections} and others (e.g., see \cite{BB96}) can be used to refine the feasibility properties of a solution obtained by a numerical solver. 
All of this is, of course, contingent on either having a ``reasonable'' closed form solution for the projection operator or a fast numerical method.

In the case of an arbitrary hyperbolicity cone, the fact that we have a relatively powerful notion of eigenvalues  gives some hope of an analogue of \eqref{eq:psd_proj}. 
Unfortunately, even though we have \emph{eigenvalues}, we do not have a suitable generalization of the notion of \emph{spectral decomposition} that is always available for an arbitrary hyperbolicity cone. Nevertheless, in this paper we will present several partial results on this front regarding the computation of \emph{distance functions to hyperbolicity cones}.


As for numerical approaches, we propose a Frank-Wolfe based method for computing the projection operator onto a hyperbolicity cone $\Lambda$. However, developing a Frank-Wolfe based approach successfully  has its own challenges. For example, the subproblems appearing during the Frank-Wolfe iteration should either have closed form solutions or be efficiently solvable. 
In addition, it is typically required that the feasible region be compact, 
which is not true for the problem of projecting a point onto a convex cone.

In this work, we show that it is possible to overcome \emph{all} these difficulties in the case of hyperbolicity cones and we will discuss a \emph{dual} Frank-Wolfe method for solving the projection problem over a hyperbolicity cone and beyond. 
Our approach is dual in the sense that the Frank-Wolfe algorithm is actually applied to the Fenchel dual of our problem of interest. 
This is because, surprisingly, solving the problem from the dual side leads to subproblems that have closed form solutions in terms of the underlying hyperbolic polynomial. 

Although our focus will be on the hyperbolicity cone case, 
the method we discuss in this paper is actually capable of solving a larger class of problems as follows:
\begin{equation}\label{prob:main}
\begin{array}{ll}
\displaystyle
\min_{x \in \mathbb{R}^n} &f(x) \\
\mathrm{s.t. } & Tx+b \in \stdCone
\end{array}
\end{equation}
where $f:\mathbb{R}^n \rightarrow \mathbb{R}$ is a closed proper $\mu$-strongly convex function,
$T:\mathbb{R}^n \rightarrow \mathbb{R}^m$ is a linear map and $\stdCone$ is
a full-dimensional pointed closed convex cone. 
In this case, we are able to show that the subproblems that appear in the Frank-Wolfe algorithm can be expressed in terms of \emph{generalized eigenvalue functions}.
The problem \eqref{prob:main} contains as a special case  the problem of computing the projection of an arbitrary point onto $\stdCone$, since for fixed $x_0 \in \Re^n$, we can take $f(x) \coloneqq \norm{x-x_0}^2$, let $T$ be the identity map and $b \coloneqq 0$.

We now summarize the main contributions of this paper.
\begin{itemize}
	\item We provide a few theoretical results on the projection operator and the distance function to hyperbolicity cones. In particular, for the so-called \emph{isometric} hyperbolic polynomials, there are formulae  analogous to the ones that hold for the positive semidefinite cone, see Propositions~\ref{prop:dist_hyperbolicity} and \ref{prop:projection}.
	\item We propose a dual Frank-Wolfe method for solving \eqref{prob:main}, which includes the particular case of projecting onto a hyperbolicity cones. One of our main results is that the solution to the subproblems appearing in our method can be expressed in terms of generalized minimal eigenvalue computations and conjugate vectors, see Theorem~\ref{theo:FW_sub_opt}. 
	In the particular case of hyperbolicity cones, we show how conjugate vectors can easily be obtained from the underlying hyperbolic polynomial, see Proposition~\ref{prop:normal_vector}.
	We then provide several convergence results in Section~\ref{sec:convergence}. 
	We emphasize that since the Frank-Wolfe method is applied to a dual problem of \eqref{prob:main}, it is still necessary to bridge the gap between the dual and primal problems. With this issue in mind, we provide some convergence results from the primal side, see Theorems~\ref{theo:xk} and \ref{thm:convergence_rate}. 
	We also provide a discussion on practical issues one may find when implementing our approach, see Section~\ref{sec:practical}.
	\item We provide an implementation of our algorithm and numerical experiments in Section~\ref{sec:numerical_experiment}. Taking interior point methods as a baseline, we compare against an earlier algorithm proposed by Renegar for hyperbolicity cones \cite{renegar2019accelerated}.  
	We also show that our implementation is capable of handling polynomials with millions of monomials, provided that the underlying computational algebra is carefully implemented, see Section~\ref{sec:num_mil}. At the end, we also have numerical experiments for non-hyperbolicity cones, see Section~\ref{sec:experiment_proj_p}.

\end{itemize}

\subsection{Related works}
In this brief subsection, we review some key works on optimization aspects of hyperbolicity cones.
G\"{u}ler wrote a pioneering work on hyperbolic polynomials and interior point methods (IPMs) \cite{guler1997hyperbolic}.
Nowadays, there are a few IPM-based generic conic solvers that are capable of handling hyperbolicity cone constraints, such as  DDS \cite{karimi2019domain}, Hypatia \cite{coey2022solving} and alfonso \cite{papp2022alfonso}. 
For Hypatia and alfonso, it is possible to use their ``generic conic interface'' to implement optimization over hyperbolicity cones.
DDS, on the other hand, has specific functionalities tailored for hyperbolicity cones. 

In any case, the problem of finding the orthogonal projection onto a hyperbolicity cone $\stdCone$ can be naturally formulated as a conic linear program over the direct product between $\stdCone$ and an additional second-order cone constraint, e.g., see \eqref{eq:hypcone2cone}.
Therefore, finding the projection can be, in theory, done with one of those solvers.

Regarding first-order methods, Renegar proposed a first-order algorithm 
for conic linear programs over hyperbolicity cones which uses smoothing and accelerating techniques \cite{renegar2019accelerated}. 

In Section~\ref{sec:numerical_experiment}, we present numerical experiments in order to compare the performance of different approaches for particular cases of the problem in \eqref{prob:main}.
Although we defer a detailed discussion to Section~\ref{sec:numerical_experiment}, we will see that our proposed algorithm (which is a first-order method) is quite competitive in comparison with the aforementioned approaches. 

\paragraph{On the generalized Lax conjecture}
A cone is said to be \emph{spectrahedral} if it is linearly isomorphic to 
an intersection of the form $\PSDcone{n}\cap L$, where $L \subseteq \S^n$ is subspace and $\S^n$ denotes the space of $n\times n$ real symmetric matrices. 
Every spectrahedral cone is a hyperbolicity cone and the \emph{generalized Lax conjecture} is precisely the question of whether the converse also holds.
Under the generalized Lax conjecture, a conic linear program over a hyperbolicity cone (i.e., a hyperbolic program) can be reformulated as a single semidefinite program, see also the 
discussion in \cite[Corollary~2]{MM23}.

Given $X \in \S^n$, it is not clear how the spectral decomposition of $X$ is related to the orthogonal projection of $X$ onto $\PSDcone{n}\cap L$, when $L$ is an arbitrary subspace of $\S^n$.
Because of that, even if, say, the generalized Lax conjecture turns out to be true, it is still makes sense to investigate numerical methods for computing projections onto hyperbolicity cones. 
Another issue is that even when a spectrahedral representation is available, it may lift the problem to a larger ambient space compared to its original formulation, which may be less efficient computationally. This happens, for example, when expressing a second-order cone constraint as a semidefinite constraint.

\subsection{Outline of this work}
In Section~\ref{sec:preliminaries}, we recall some necessary definitions from convex analysis, hyperbolic polynomials and the Frank-Wolfe method.
In Section~\ref{sec:projection} we prove a theoretical discussion on the distance function and the projection operator onto an isometric hyperbolicity cone.
In Section~\ref{sec:algorithm}, we propose and analyze a  first-order dual algorithm to optimize strongly convex functions over regular cones based on a Frank-Wolfe method. 
In Section~\ref{sec:numerical_experiment}, we show the results of numerical experiments. We compare our algorithm with Renegar's method described in \cite{renegar2019accelerated} and
the DDS \cite{karimi2019domain} solver. As our proposed method is also applicable to more general cones, we also include numerical experiments for the problem of projecting onto $p$-cones and a comparison with Mosek \cite{MC2020}. 
Section~\ref{sec:conc} concludes this paper.

\section{Preliminaries}\label{sec:preliminaries}
We start with notations and basic definitions.
Given an element $u \in \Re^n$, we will denote its $i$-th component 
by $u_i$. We use $\bm{1}_n$ to denote the $n$-dimensional vector whose components are all equal to $1$.
We write $\orR{n}$ for the cone of elements $u \in \Re^n$ satisfying 
$u_1 \geq \cdots \geq u_n$. Let $u \in \Re^n$, we denote by $\Sd{u}$ the element in $\Re^n_{\downarrow}$  
corresponding to a reordering of the coordinates of $u$ in such a way 
that
\[
\Sd{u}_1 \geq \cdots \geq \Sd{u}_n.
\]
 We write $\Re^n_+$ for the nonnegative orthant, i.e., the elements $u \in \Re^n$ such that $u_i \geq 0$ for every $i$.

For a convex subset $S \subseteq \Re^n$, we denote its indicator function, recession cone, interior and relative interior by $\delta_S$, $0^+S$, $\interior S$ and $\reInt S$, respectively. %
Additionally, we suppose $\Re^n$ is endowed with an inner product $\inProd{\cdot}{\cdot}$ and a corresponding induced norm $\norm{\cdot}$.
With that, we denote by $S^\perp$ the set of elements orthogonal to $S$.
For a convex cone $\stdCone \subseteq \Re^n$, we define its dual cone as
\[ \stdCone^* \coloneqq  \{ x \in \Re^n \mid \forall y \in \stdCone, \:\:\langle x ,y \rangle \geq 0 \}. \] 
A cone $\stdCone$ is said to be \emph{pointed} if $\stdCone \cap - \stdCone = \{0\}$ and \emph{full-dimensional} if its interior is non-empty.  A full-dimensional pointed cone is said to be \emph{regular}.

Two elements satisfying $x \in \stdCone, y \in \stdCone^*$  and $\inProd{x}{y} = 0$ are said to be \emph{conjugate}. 
For $x \in \stdCone$, we denote the set of elements conjugate to $x$ by $\cj{x}$ so that
\begin{equation}\label{eq:cj_def}
\cj{x} \coloneqq  \left\lbrace y \in \stdCone^* \mid \inProd{y}{x} = 0 \right\rbrace = \stdCone^*\cap\{x\}^\perp.
\end{equation}
The reason for this notation is that if $\stdFace_x$ denotes the unique face of $\stdCone$ satisfying $x \in \reInt \stdFace_x$, then $\cj{x}$ as defined in \eqref{eq:cj_def} is exactly the conjugate face to $\stdFace_x$, i.e., $\cj{x} = \stdCone^* \cap \stdFace_x^\perp$ holds. For more details on faces of cones, see \cite{Ba81,Pa00}.

For a closed convex function $f:\Re^n \to \Re\cup \{\infty\}$, we denote its conjugate function by $f^*$, which is defined as
\begin{equation*}
  f^*(s) \coloneqq \sup_{x\in \mathrm{dom}f} \{\langle s, x\rangle -f(x)\},
\end{equation*}
where $\mathrm{dom}f = \{x \in \Re^n \mid f(x) < \infty\}$.
The subdifferential of $f$ at $x \in \Re^n$ is denoted by $\partial f(x)$.

For a linear map $T:\Re^n\to \Re^m$, we denote by $T^*$ the adjoint map of $T$. We denote the operator norm of $T$ by $\|T\|_{\mathrm{op}}$, which is defined as
\begin{equation*}
\|T\|_{\mathrm{op}} \coloneqq \sup_{x \neq 0} \frac{\|Tx\|}{\|x\|},
\end{equation*}
where by a slight abuse of notation we use the same symbol $\| \cdot \|$ to indicate the underlying norm in $\Re^n$ and $\Re^m$.

A differentiable function $f:\Re^n\to \Re$ is called \emph{$L$-smooth} if $\nabla f$ is Lipschitz continuous with constant $L>0$, i.e.,
\begin{equation*}
\norm{\nabla f(x)-\nabla f(y)} \leq L\norm{x-y}, \qquad \forall x,y \in \Re^n.
\end{equation*}
For a $\mu>0$, $f$ is called \emph{$\mu$-strongly convex} if for every $\theta \in [0,1]$ we have
\begin{equation*} f(\theta x +(1-\theta)y)
  \leq \theta f(x) + (1-\theta)f(y) -\frac{\mu}{2}\theta(1-\theta)\norm{x-y}^2, \qquad \forall x,y \in \Re^n.
\end{equation*}
Finally, we recall that the conjugate of a $\mu$-strongly convex function is $1/\mu$-smooth, see \cite[Theorem~4.2.1]{HUL96_II}.

\subsection{Hyperbolic polynomials}\label{sec:hyp_p}

Let $p: \Re^n \to \Re$ be a homogeneous polynomial, we say that 
\emph{$p$ is hyperbolic along the direction $e$} if and only if 
$p(e) \neq 0$ and for every $x \in \Re^n$, the univariate polynomial 
\[
t \mapsto p(x -te)
\]
has only real roots. Here we summarize some basic properties of hyperbolic polynomials that will be necessary in the subsequent sections. For more details, see \cite{gaarding1959inequality,guler1997hyperbolic,renegar2004hyperbolic}.

Suppose that $p$ has degree $d$. 
We define the map $\lambda: \Re^n \to \orR{d}$ that maps $x \in \Re^n$ to the $d$ roots of the polynomial $p(x -te)$,
ordered from largest to smallest. That is, we have 
\begin{align*}
p(x-te) & = p(e)\prod _{i=1}^d(\lambda_i(x)-t)\\
& \lambda_1(x) \geq \cdots \geq \lambda_d(x).
\end{align*}
In analogy to classical linear algebra, we will say that $\lambda_1(x),\ldots, \lambda_d(x)$ are the \emph{eigenvalues of $x$}. We also write $\lambda_d(x)$ as $\lambda_{\min}(x)$ to emphasize that $\lambda_d(x)$ is the smallest eigenvalue.
Then, the \emph{hyperbolicity cone of $p$ along the direction $e$} is
the closed convex cone $\Lambda(p,e)$  given by 
\[
\Lambda(p,e) = \{x \in \Re^n \mid \lambda_{\min}(x) \geq 0 \},
\]
see Section~2 in \cite{renegar2004hyperbolic}. If $p$ and $e$ are clear from the context, we write $\Lambda(p,e)$ as $\Lambda$. 
If $\hat e \in \interior \Lambda(p,e)$, then 
$\Lambda(p,e) = \Lambda(p,\hat e)$ holds, see \cite[Theorem~3]{renegar2004hyperbolic}.

For $u \in \Re^{d}$ we denote by $\lambda^{-1}(u)$ the inverse image of $\{u\}$ by $\lambda$, so that 
\begin{equation}\label{eq:lamb_inv}
\lambda^{-1}(u) \coloneqq \{x \in \Re^n \mid \lambda(x) = u\}.
\end{equation}
Also, for a subset $S \subseteq \Re^n$ we have 
\begin{equation}\label{eq:lamb_S}
\lambda(S) \coloneqq \{\lambda(x) \mid x \in S\}. 
\end{equation}
Since we are assuming that the eigenvalues are ordered, we always have $\lambda(S) \subseteq \Re^d_{\downarrow}$.


For $x \in \Re^n$, we denote by $\mult(x)$ the number of zero eigenvalues of $x$. That is, $\mult(x)$ is the multiplicity of zero as a root of $t \mapsto p(x-te)$.

A hyperbolic polynomial $p$ is said to be \emph{complete} if and only if \[\{x \in \Re^n \mid \lambda(x) = 0 \} = \{x \in \Re^n \mid \mult(x) = d\} =\{0\}.\]
This happens if and only if $\Lambda$ is \emph{pointed}, see \cite[Proposition~11]{renegar2004hyperbolic}.

Before we move on, we briefly relate the discussion so far to classical linear algebra. 
The eigenvalues given by some hyperbolic polynomial $p$ are  roots of univariate polynomials that are analogous to characteristic polynomials of real symmetric matrices.
In particular, as will be discussed later in Example~\ref{ex:isom}, the determinant polynomial  $\det: \S^n \to \Re$ over the real $n\times n$ symmetric matrices is a hyperbolic polynomial along the identity matrix. 
In this case, the eigenvalues of $x \in \S^n$ as a symmetric matrix coincide with the eigenvalues of $x$ in the sense described in this section. 

That said, the analogy to symmetric matrices is not perfect because while we have a suitable generalization of the notion of characteristic polynomial, there are no obvious analogues to eigenvectors and spectral decompositions when $p$ is an arbitrary hyperbolic polynomial.
In \cite[Section~6]{BGLS98_pre}, there is a discussion on an analogue of the spectral decomposition for hyperbolic polynomials, but it does not apply to arbitrary $p$ in view of \cite[Theorem~6.5]{BGLS98_pre}. 
We will revisit this point in Section~\ref{sec:proj_op}.

\paragraph{Derivative relaxations}
Let $D_{e}p$ denote the directional derivative of $p$ along $e$, so that\[
D_{e}p(x) = \lim _{t \to 0} \frac{p(x+te) -p(x)}{t}, \quad \forall x \in \Re^n.
\]
Then, the function $D_ep : \Re^n \to \Re$ is also a hyperbolic polynomial along $e$. The hyperbolicity cone associated with $(D_e p,e)$,
is called \emph{the derivative cone} (of $p$ along $e$) and is denoted by $\Lambda'$.


We write $D_e^i p$ for the higher order derivatives, so that $D_e^i p (x) = \frac{d^ip(x+te)}{dt^i}|_{t=0}$. Then, 
we define $p^{(0)}\coloneqq p$, $p^{(1)} \coloneqq D_e^1 p, \ldots, p^{(d)} \coloneqq D_e^d p$. Taking the derivative repeatedly gives a sequence of hyperbolic polynomials
and associated hyperbolicity cones
\begin{equation*}
  \Lambda \subseteq \Lambda^{(1)} \subseteq \dots \subseteq \Lambda^{(d-1)},
\end{equation*}
where $\Lambda^{(i)} \coloneqq \Lambda(p^{(i)},e)$.

Finally, we need the following property of hyperbolicity cones.
\begin{theorem}{\cite[Theorem 12]{renegar2004hyperbolic}}
  \label{thm:mult_derivative}
  Let $\Lambda$ be a hyperbolicity cone. Define $\partial^r \Lambda = \{ x \in \Lambda \mid \mathrm{mult}(x) = r \}$.
  Then, for $r \geq 2$,
  \begin{equation*}
      \partial^r \Lambda^{(1)} = \partial^{r+1} \Lambda.
  \end{equation*}
  Also,
  \begin{equation*}
  \partial^1\Lambda^{(1)} \cap \Lambda  = \partial^2 \Lambda.
  \end{equation*}
  \end{theorem}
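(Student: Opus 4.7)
The plan is to work with the univariate polynomial $q(t) \coloneqq p(x-te)$, whose derivative by the chain rule satisfies $q'(t) = -p^{(1)}(x-te)$. Consequently, the eigenvalues of $x$ relative to $p$ are the roots $\lambda_1(x) \geq \dots \geq \lambda_d(x)$ of $q$, while those relative to $p^{(1)}$ are the roots $\mu_1 \geq \dots \geq \mu_{d-1}$ of $q'$. Two elementary facts about real-rooted univariate polynomials then drive everything: (i) the interlacing property $\lambda_i(x) \geq \mu_i \geq \lambda_{i+1}(x)$ for $i = 1, \dots, d-1$; and (ii) if $0$ is a root of $q$ of multiplicity $s \geq 1$, then $0$ is a root of $q'$ of multiplicity exactly $s - 1$, which follows from factoring $q(t) = t^s h(t)$ with $h(0) \neq 0$ and differentiating.

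For the first identity, the inclusion $\partial^{r+1}\Lambda \subseteq \partial^r\Lambda^{(1)}$ is immediate from $\Lambda \subseteq \Lambda^{(1)}$ together with (ii). The reverse inclusion is the substantial step: given $x \in \Lambda^{(1)}$ with $\mathrm{mult}_{p^{(1)}}(x) = r \geq 2$, since the $\mu_i$ are nonnegative and ordered the zero eigenvalues sit at the tail, so $\mu_{d-r} = \dots = \mu_{d-1} = 0$. Interlacing then forces $\lambda_{d-r+1}(x) = \dots = \lambda_{d-1}(x) = 0$, together with $\lambda_{d-r}(x) \geq 0$ and $\lambda_d(x) \leq 0$. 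A short case analysis using (ii) eliminates every configuration except $\lambda_{d-r}(x) = \lambda_d(x) = 0$: any other choice of signs makes the multiplicity of $0$ in $q$ at most $r$, hence at most $r-1$ in $q'$, contradicting $\mathrm{mult}_{p^{(1)}}(x) = r$. So $x \in \Lambda$ and $\mathrm{mult}_p(x) = r+1$, i.e., $x \in \partial^{r+1}\Lambda$.

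The second identity $\partial^1\Lambda^{(1)} \cap \Lambda = \partial^2\Lambda$ is easier because the extra hypothesis $x \in \Lambda$ already supplies the nonnegativity of all $\lambda_i(x)$, so (ii) directly yields $\mathrm{mult}_p(x) = \mathrm{mult}_{p^{(1)}}(x) + 1 = 2$; the reverse inclusion is once again immediate from $\Lambda \subseteq \Lambda^{(1)}$ and (ii). The principal obstacle is really the reverse inclusion of the first identity: interlacing controls only the top $d-1$ eigenvalues, so the sign of $\lambda_d(x)$ must be extracted by comparing multiplicity counts in $q$ and $q'$. This comparison is decisive only when $r \geq 2$, which is precisely why the $r = 1$ case in the second statement must impose $x \in \Lambda$ by hand rather than derive it.
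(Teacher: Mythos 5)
The paper does not prove this statement --- it is quoted verbatim as Theorem~12 of Renegar's 2004 paper --- so there is no in-paper proof to compare against. Your argument is the natural one behind that theorem: pass to $q(t)=p(x-te)$, note $q'(t)=-p^{(1)}(x-te)$, and combine interlacing of the roots of $q$ and $q'$ with the fact that differentiation drops the multiplicity of a common root by exactly one. The case analysis for the inclusion $\partial^r\Lambda^{(1)}\subseteq\partial^{r+1}\Lambda$ is set up correctly: interlacing pins $\lambda_{d-r+1}(x)=\cdots=\lambda_{d-1}(x)=0$ and sandwiches $\lambda_{d-r}(x)\geq 0\geq\lambda_d(x)$, after which the multiplicity count forces $\lambda_{d-r}(x)=\lambda_d(x)=0$, since otherwise $\mathrm{mult}_p(x)\leq r$ and hence $\mathrm{mult}_{p^{(1)}}(x)\leq r-1<r$ (and $\mathrm{mult}_p(x)\geq r-1\geq 1$ when $r\geq 2$, so the drop rule does apply). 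Your closing remark on why the $r=1$ case must carry the hypothesis $x\in\Lambda$ is also the right observation.

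One small imprecision in the second identity: you write that nonnegativity of the $\lambda_i(x)$ lets fact~(ii) ``directly yield'' $\mathrm{mult}_p(x)=\mathrm{mult}_{p^{(1)}}(x)+1$. But (ii) requires $\mathrm{mult}_p(x)\geq 1$ as a premise, and nonnegativity of the eigenvalues alone does not supply it. You still need one interlacing step: $\lambda_d(x)\leq\mu_{d-1}=0$ together with $\lambda_d(x)\geq 0$ forces $\lambda_d(x)=0$, after which (ii) applies. The fix is one line with a tool you already invoke, but as written the sentence skips that step.
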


\subsection{Generalized minimum eigenvalue functions}\label{sec:gmin}
In this subsection we discuss a generalization of the minimum eigenvalue function that is applicable to arbitrary regular cones.
As we will see shortly, when specialized to hyperbolicity cones, this generalization coincides with the definition given in Section~\ref{sec:hyp_p}.
Let $\stdCone \subseteq \Re^n$ be a regular (i.e., full-dimensional and pointed) closed convex cone. 
Let $e \in \interior \stdCone$ be fixed, then the \emph{minimum eigenvalue function $\lambda_{\min}:\Re^n \to \Re$ with respect to $\stdCone$ and $e$} is defined as:
\begin{equation}\label{eq:min_eig_def}
\lambda_{\min}(x) \coloneqq \sup\,\{t \mid x - te \in \stdCone\}.
\end{equation}
First we observe that $\lambda_{\min}(x) = \inf \, \{t \mid x - te \not\in \stdCone\}$ holds and this was the original definition considered in \cite[Section~2]{Renegar16}, see also \cite[Section~2.5]{L17} and \cite[Section~2]{IL17}.
Given that there is a dependency on $e$ and $\stdCone$, it might be appropriate to use some  notation similar to ``${\lambda^{\stdCone,e}_{\min}}$'', but since there will be no ambiguity regarding the chosen $\stdCone$ and $e$, we will use the simpler $\lambda_{\min}$.
We recall the following basic properties of $\lambda_{\min}$.
\begin{proposition}\label{prop:eig_p}
Let $\stdCone \subseteq \Re^n$ be a regular closed convex cone and $e \in \interior \stdCone$ be fixed. Let $\lambda_{\min}$ be as in \eqref{eq:min_eig_def}. For $x \in \Re^n$, the following items hold.
\begin{enumerate}[$(i)$]
	\item $\lambda_{\min}(x)$ is finite.
	\item $x \in \stdCone \Longleftrightarrow \lambda_{\min}(x) \geq 0$.
	\item $x \in \interior \stdCone \Longleftrightarrow \lambda_{\min}(x) > 0$.
\end{enumerate}
\end{proposition}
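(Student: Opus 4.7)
The plan is to establish all three items by first understanding the structure of the level set $S_x \coloneqq \{t \in \Re \mid x - te \in \stdCone\}$, whose supremum is by definition $\lambda_{\min}(x)$. The first observation I would record is that $S_x$ is closed (from continuity of $t \mapsto x - te$ and closedness of $\stdCone$) and downward closed: if $t \in S_x$ and $t' \leq t$, then $x - t'e = (x-te) + (t-t')e \in \stdCone + \stdCone \subseteq \stdCone$ since $e \in \stdCone$. Once (i) is proved, these two properties force $S_x = (-\infty, \lambda_{\min}(x)]$, and in particular $\lambda_{\min}(x)$ is attained, which will be crucial for (iii).

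For (i), I need to show $S_x$ is nonempty and bounded above. Nonemptiness follows by taking $t = -s$ with $s > 0$ large and writing $x - te = s(e + x/s)$; since $x/s \to 0$ and $e \in \interior \stdCone$, we have $e + x/s \in \stdCone$ for $s$ sufficiently large, hence $x - te \in \stdCone$. Boundedness above is the only place where pointedness intervenes: if $S_x$ were unbounded, one could find $t_n \to \infty$ with $x - t_n e \in \stdCone$, so $x/t_n - e \in \stdCone$, and closedness gives $-e \in \stdCone$. Combined with $e \in \stdCone$ and $\stdCone \cap -\stdCone = \{0\}$, this would yield $e = 0$, contradicting $e \in \interior \stdCone$ (which forces $e \neq 0$ since $\stdCone \neq \Re^n$ by pointedness).

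Item (ii) is then immediate: $x \in \stdCone$ iff $0 \in S_x$ iff $0 \leq \lambda_{\min}(x)$, using the characterization $S_x = (-\infty,\lambda_{\min}(x)]$. For item (iii), the forward direction uses that $x \in \interior \stdCone$ implies $x - te \in \stdCone$ for all sufficiently small $|t|$ by openness, so $\lambda_{\min}(x) > 0$. For the converse, attainment of the supremum gives $x - \lambda_{\min}(x) e \in \stdCone$, and then
\[
x = (x - \lambda_{\min}(x) e) + \lambda_{\min}(x)\, e \in \stdCone + \interior \stdCone \subseteq \interior \stdCone,
\]
where the last inclusion follows from the standard fact that the sum of a convex set and an open convex set is open (together with convexity of $\stdCone$ and $\lambda_{\min}(x)\, e \in \interior \stdCone$ since $\lambda_{\min}(x) > 0$).

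The only genuinely delicate step is the upper-bound argument in (i), where one has to invoke pointedness carefully to rule out $-e \in \stdCone$; the rest is bookkeeping with closedness and the cone axioms.
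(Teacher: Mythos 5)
Your proof is correct, and while it reaches the same destination as the paper's, the organization is genuinely different in a way worth noting. The paper proves item $(ii)$ by a limiting/subsequence argument: given $\lambda_{\min}(x) \geq 0$, it extracts $t_k \to \bar{t} \geq 0$ with $x - t_k e \in \stdCone$ and uses closedness. You instead front-load the structural observation that the level set $S_x = \{t \mid x - te \in \stdCone\}$ is nonempty, closed, downward closed, and bounded above, hence equals the ray $(-\infty, \lambda_{\min}(x)]$. Once you know this, both $(ii)$ and $(iii)$ become essentially one-line deductions: $(ii)$ is just ``$0 \in S_x$ iff $0 \leq \lambda_{\min}(x)$,'' and for the converse of $(iii)$ attainment hands you $x - \lambda_{\min}(x)e \in \stdCone$ directly, from which $x \in \stdCone + \interior\stdCone \subseteq \interior\stdCone$. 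This buys you cleaner bookkeeping and also lets you actually write out $(iii)$, which the paper waves away as ``similar.'' One small thing you handle more explicitly than the paper: for the upper bound in $(i)$, the paper cites a Rockafellar theorem about recession directions, whereas you give the direct limiting argument $x/t_n - e \to -e$ and then use closedness --- both are fine, yours being more self-contained. The argument that $e \neq 0$ (because $e = 0 \in \interior\stdCone$ would force $\stdCone = \Re^n$, contradicting pointedness) is correct and worth keeping, since it is the point at which pointedness actually does its work.
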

\begin{proof}
Since $e$ is an interior point of $\stdCone$, there exists $u > 0$ such 
that $e + ux \in \stdCone$ and since $\stdCone$ is a cone, 
$x + e/u \in \stdCone$ holds as well. This shows that $\lambda_{\min}(x) > -\infty$. 
On the other hand if $\lambda_{\min}(x)  = \infty$, then $x-te$ belongs to $\stdCone$ for all $t$. This would imply that $-e \in \stdCone$ (e.g., see \cite[Theorem~8.3]{rockafellar}), which would contradict the pointedness of $\stdCone$. This shows that item~$(i)$ holds.

For item~$(ii)$, if $x \in \stdCone$, then $t = 0$ is a feasible solution to \eqref{eq:min_eig_def}, so indeed $\lambda_{\min}(x) \geq 0$. Conversely, 
if  $\lambda_{\min}(x) \geq 0$, then for every $k > 0$, there exists $t_{k}$ satisfying
$\lambda_{\min}(x)  \geq t_{k} > \lambda_{\min}(x)-\frac{1}{k} \geq - \frac{1}{k}$ and $x-t_{k}e \in \stdCone$. Passing to a convergent subsequence if necessary, we may assume that $t_k$ converges to some $\bar{t} \geq 0$. 
Since $\stdCone$ is closed, we have $x-\bar{t}e \in \stdCone$. 
This implies that $x = (x-\bar{t}e) + (\bar{t}e) \in \stdCone$.

The proof of item~$(iii)$ is similar so we omit it.  
\end{proof}

Now, suppose that $\stdCone = \Lambda(p,e)$ is a regular hyperbolicity cone. In the following proposition we observe that $\lambda_{\min}$ as defined in Section~\ref{sec:hyp_p} and in \eqref{eq:min_eig_def} coincide.
\begin{proposition}\label{prop:hyp_min_eig}
Let $\Lambda(p,e) \subseteq \Re^n$ be a regular hyperbolicity cone and suppose that the degree of $p$ is $d$. Then, for every $x \in \Re^n$, $\lambda_{d}(x) = \sup\{t \mid x - te \in \Lambda(p,e) \}$, where $\lambda_{d}(x)$ is the smallest root of 
$t \mapsto p(x - te)$.
\end{proposition}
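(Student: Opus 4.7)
The plan is to exploit the product factorization of $t \mapsto p(x-te)$ in order to understand how the eigenvalues shift when we translate $x$ along the direction $e$. Concretely, I would start from the identity
\[
p(x-te) = p(e)\prod_{i=1}^d(\lambda_i(x)-t),
\]
which holds by the definition of the $\lambda_i$ given at the start of Section~\ref{sec:hyp_p}. Substituting $x-se$ in place of $x$ and using homogeneity of the hyperbolic direction together with the identity $p((x-se)-te)=p(x-(s+t)e)$, I would obtain
\[
p((x-se)-te) = p(e)\prod_{i=1}^d\bigl((\lambda_i(x)-s)-t\bigr).
\]
Since the values $\lambda_1(x)-s\geq\cdots\geq\lambda_d(x)-s$ are real and correctly ordered, they must be exactly $\lambda_1(x-se),\ldots,\lambda_d(x-se)$. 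In particular, $\lambda_d(x-se)=\lambda_d(x)-s$.

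Once this ``shift'' lemma is in place, I would combine it with the characterization $\Lambda(p,e)=\{y\in\Re^n\mid \lambda_d(y)\geq 0\}$ (recalled immediately before the statement of the proposition) to get
\[
x-se\in\Lambda(p,e) \iff \lambda_d(x-se)\geq 0 \iff \lambda_d(x)-s\geq 0 \iff s\leq \lambda_d(x).
\]
Taking the supremum over all such $s$ yields $\sup\{t\mid x-te\in\Lambda(p,e)\}=\lambda_d(x)$, which is the desired identity. Note that the supremum is in fact attained (at $s=\lambda_d(x)$), so there is no subtlety about open versus closed inequalities.

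I do not expect any real obstacle: the whole argument is a short unwrapping of the definitions once the shift identity $\lambda_i(x-se)=\lambda_i(x)-s$ is observed. The only point worth double-checking is that the factorization is valid for \emph{all} $x\in\Re^n$ (not just those in the interior of $\Lambda(p,e)$), but this is already built into the setup of Section~\ref{sec:hyp_p}, where $\lambda(\cdot)$ is defined on all of $\Re^n$ via the real roots of $t\mapsto p(x-te)$.
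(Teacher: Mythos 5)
Your proof is correct and takes essentially the same route as the paper: both arguments hinge on establishing the shift identity $\lambda(x-se)=\lambda(x)-s\bm{1}_d$ (the paper phrases this as ``$\gamma$ is a root of $s\mapsto p(x-es)$ iff $\gamma-t$ is a root of $s\mapsto p(x-te-es)$,'' while you derive it from the product factorization), and then unwind the characterization $\Lambda(p,e)=\{y\mid\lambda_d(y)\geq 0\}$. The only cosmetic oddity in your write-up is the appeal to ``homogeneity of the hyperbolic direction'' for the identity $p((x-se)-te)=p(x-(s+t)e)$, which is just linearity of the argument and needs no homogeneity at all.
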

\begin{proof}
	Let $x \in \Re^n$ and suppose that $p$ has degree $d$. 	
For any $t \in \Re$, we observe that  $\gamma$ is a root of $s\mapsto p(x-es)$ if 
and only if $\gamma-t$ is a root of $s \mapsto p(x-t e -es)$.
Therefore, the eigenvalue map (as in Section~\ref{sec:hyp_p}) satisfies 
 \[\lambda(x - t e) = \lambda(x) -t \textbf{1}, \qquad \forall t \in \Re,
 \] 
 where $\textbf{1} \in \Re^d$ is the vector where each component is $1$.
 Recalling that $x - t e \in \Lambda(p,e)$ if and only if 
 $\lambda(x-te) \geq 0$, we see 
 that the condition ``$x - te \in \Lambda(p,e)$'' implies the component-wise inequality 
 \[
 \lambda(x) \geq t\textbf{1}.
 \]
 In particular, the maximum value that $t$ can assume under the constraint ``$x - te \in \Lambda(p,e)$'' is $\lambda_d(x)$, where $\lambda_d(x)$ is the smallest root of $t \mapsto p(x - te)$.
	Conversely, if $t \coloneqq \lambda_{d}(x)$, 
	we have  $\lambda_{d}(x - \lambda_{d}(x)e)= 0$, so 
	$x - \lambda_{d}(x) e \in \Lambda(p,e)$.
	That is,  $\lambda_{d}(x)$ is the optimal solution to the maximization problem in \eqref{eq:min_eig_def}.
\end{proof}

%

\subsection{The Frank-Wolfe Method}
In this subsection, we review some of the basic aspects of 
the Frank-Wolfe method (FW method) proposed by Frank and Wolfe \cite{frank1956algorithm}, which is also known as conditional gradient method \cite[Chapter~3]{demyanov1970approximate}, see also \cite{FG14,bomze2021frank,Po23}.
Originally Frank and Wolfe proposed the algorithm to optimize a quadratic function over a polyhedral set, but the FW method is applicable to
the following more general problem:
\begin{equation}\label{eq:fw_prob}
   \min_{x \in C} f(x), 
\end{equation}
where $C$ is a convex and compact set in $\mathbb{R}^n$ and $f$ is a differentiable and $L$-smooth function in $C$.
There are several variants on FW methods \cite{wolfe1970convergence,holloway1974extension,lacoste2015global,PRS16}, but here we 
only make use of the simplest version, see Algorithm~\ref{alg:FW_method}, 
which follows the description in the survey \cite{bomze2021frank}.

\begin{figure}[htbp]
  \begin{algorithm}[H]
    \caption{The Frank-Wolfe method \cite{frank1956algorithm}}
    \label{alg:FW_method}
    \begin{algorithmic}[1]
      \STATE Choose a point $x_0 \in C$
      \FOR{$k = 0,1,\dots$}
        \STATE If $x_k$ satisfies some stopping criterion, STOP
        \STATE Compute $s_k \in \argmin_{x \in C} \langle \nabla f(x_k), x \rangle $
        \STATE Set $d_k \coloneqq s_k - x_k$
        \STATE Choose step size $\alpha_k \in (0,1] $
        \STATE Set $x_{k+1} \coloneqq x_k + \alpha_k d_k$
      \ENDFOR
      \end{algorithmic}
  \end{algorithm}
\end{figure}
One important aspect of the FW method is that it does not require the projection operator onto $C$. Instead, we assume the availability of a linear optimization oracle over $C$ which is capable of solving the subproblem $\min_{x \in C} \langle \nabla f(x_k), x \rangle $ appearing in line 4 of Algorithm~\ref{alg:FW_method}.
A successful application of the FW  method thus depends on having a 
fast way to solve the underlying subproblem. 
Fortunately there are many such problems, which have contributed for the recent renewed interest in FW methods in optimization, machine learning and even extensions to nonconvex problems, e.g., \cite{jaggi2013revisiting,LT13,BPT20,ZZLP21}.


Another useful feature of FW methods is that there is an easily computable measure of convergence called the Frank-Wolfe gap (the FW gap).
The FW gap at $x$ is denoted by $G(x)$ and is defined as
\begin{equation}
  \label{eq:FW_gap}
  G(x) \coloneqq \max_{s \in C} \langle -\nabla f(x), s-x\rangle.
\end{equation}
Let $\OPT{x}$ denote an optimal solution to \eqref{eq:fw_prob}. Since 
$f$ is convex, for $x \in C$ we have $\inProd{-\nabla f(x)} {\OPT{x}-x}\geq f(x) -f(\OPT{x}) \geq 0$. We also recall that $x \in C$ is optimal if and only if 
$-\nabla f(x)$ belongs to the normal cone of $C$ at $x$.
Consequently, the FW gap has the following properties for $x \in C$, 
see also \cite[Section~5.1]{bomze2021frank}:
\begin{itemize}
  \item $G(x)$ is always nonnegative and equal to $0$ if and only if $x$ is optimal.
  \item $G(x) \geq  f(x) -f(\OPT{x})$ holds.
  \item $G(x_k) = \langle -\nabla f(x_k), d_k\rangle$ for the $k$-th iterate in Algorithm~\ref{alg:FW_method}.
\end{itemize}
In particular, under convexity, $G(x)$ is an upper bound to the optimality gap $f(x) -f(\OPT{x})$.
Also, $G(x_k)$ can be calculated easily at each iteration. 
Because of these properties, $G(x_k)$ is often used as a stopping criterion.

We end this section with some known convergence result regarding Algorithm~\ref{alg:FW_method}.
\begin{theorem}\label{thm:convergence_FW}
  If $f$ is convex and step size $\alpha_k$ is given by one of the following rules:
  \begin{itemize}
    \item diminishing step size rule \cite[Theorem~1]{jaggi2013revisiting}: $ \alpha_k \coloneqq \frac{2}{k+2}$
    \item exact line search \cite[Theorem~3.1]{dunn1980convergence}: $\alpha_k \coloneqq \argmin_{\alpha\in[0,1]} f(x_k+\alpha d_k)$
    \item Lipschitz constant dependent step size \cite[Theorem~6.1]{levitin1966constrained}: $\alpha_k \coloneqq \min\left\lbrace -\frac{\langle \nabla f(x_k), d_k \rangle}{L\norm{d_k}^2} , 1 \right\rbrace$,  where $L$ is a Lipschitz constant of $\nabla f$.
  \end{itemize}
  Then  the sequence $\{x_k\}$ generated by Algorithm~\ref{alg:FW_method} satisfies
  \begin{equation*}
  f(x_k) - f_{\mathrm{opt}} = O(1/k).
  \end{equation*}
\end{theorem}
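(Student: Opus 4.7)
The plan is to derive a one-step recursion on the optimality gap $h_k \coloneqq f(x_k) - f(\OPT{x})$ using the $L$-smoothness of $f$ together with the Frank-Wolfe gap identity, and then analyze the recursion separately for each of the three step-size rules.

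First I would apply the standard $L$-smoothness quadratic upper bound at $x_k$ evaluated at $x_{k+1} = x_k + \alpha_k d_k$, which gives $f(x_{k+1}) \leq f(x_k) + \alpha_k \inProd{\nabla f(x_k)}{d_k} + (L\alpha_k^2/2)\norm{d_k}^2$. Invoking the properties listed after \eqref{eq:FW_gap}, namely $\inProd{-\nabla f(x_k)}{d_k} = G(x_k)$ and $G(x_k) \geq h_k$, together with $\norm{d_k} \leq D$ where $D \coloneqq \mathrm{diam}(C) < \infty$ by the compactness of $C$, this collapses to $h_{k+1} \leq (1-\alpha_k) h_k + L \alpha_k^2 D^2 / 2$. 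This single recursion will drive all three arguments.

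For the diminishing step size $\alpha_k = 2/(k+2)$, I would prove $h_k \leq 2 L D^2 / (k+2)$ by induction on $k$; the inductive step is routine, reducing to the algebraic inequality $(k+1)(k+3) \leq (k+2)^2$. For the Lipschitz-dependent rule, $\alpha_k$ is (up to clipping at $1$) the minimizer over $[0,1]$ of the right-hand side of the smoothness bound viewed as a quadratic in $\alpha$, so the one-step decrease is at least as large as what is obtained by substituting $\alpha = 2/(k+2)$ into the same bound; the $O(1/k)$ rate therefore carries over. For the exact line search, $f(x_{k+1}) = \min_{\alpha \in [0,1]} f(x_k + \alpha d_k)$ is no larger than $f(x_k + (2/(k+2))d_k)$, whose value is bounded by the smoothness inequality, so again the same rate follows.

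The main obstacle, such as it is, lies in the Lipschitz-dependent rule: the clipping to $[0,1]$ forces a case split depending on whether $G(x_k)/(L\norm{d_k}^2) < 1$ or not. In the unclipped regime one gets the strong decrease $h_{k+1} \leq h_k - G(x_k)^2/(2L D^2)$, whereas in the clipped regime $\alpha_k = 1$ and one uses $G(x_k) \geq L \norm{d_k}^2$ directly. Both cases lead to a decrease compatible with an $O(1/k)$ rate, and once the bookkeeping is carried out uniformly the theorem follows.
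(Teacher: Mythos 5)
The paper does not actually prove this theorem; it states it as a known result and defers to the three cited references (\cite{jaggi2013revisiting}, \cite{dunn1980convergence}, \cite{levitin1966constrained}) for the respective step-size rules. Your proposal is a correct, self-contained version of the standard argument. The master recursion $h_{k+1} \leq (1-\alpha_k) h_k + L\alpha_k^2 D^2/2$ follows as you describe from the $L$-smoothness quadratic upper bound, the identity $G(x_k) = \langle -\nabla f(x_k), d_k\rangle$, the bound $G(x_k) \geq h_k$, and $\norm{d_k}\leq D$; the induction $h_k \leq 2LD^2/(k+2)$ indeed reduces to $(k+1)(k+3)\leq(k+2)^2$; and both the exact-line-search and Lipschitz rules dominate the diminishing rule, reducing to the same recursion.

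Two small remarks. First, your final paragraph's case split on whether the Lipschitz step is clipped is unnecessary and sits in tension with the cleaner argument you already gave in the middle paragraph: since $\alpha_k$ is exactly the minimizer of the smoothness quadratic $Q(\alpha) = f(x_k) - \alpha G(x_k) + L\alpha^2\norm{d_k}^2/2$ over $[0,1]$ and $2/(k+2)\in[0,1]$, one has $f(x_{k+1}) \leq Q(\alpha_k) \leq Q(2/(k+2))$ whether or not the clipping is active, so the same recursion drops out with no case analysis. Second, the induction needs a base case, which you did not state; with $\alpha_0 = 1$ the recursion gives $h_1 \leq LD^2/2 \leq 2LD^2/3$, so starting the induction at $k=1$ is the cleanest way to close the argument.
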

Finally, we mention in passing that there are also convergence results about 
the FW gap, e.g., see \cite{jaggi2013revisiting}.

\section{Distance function and projection under the norm induced by
	a hyperbolic polynomial}
\label{sec:projection}
We assume throughout this section that $p:\Re^n \to \Re$ is a complete hyperbolic polynomial along $e \in \Re^n$ of degree $d$. 
We will denote the corresponding hyperbolicity cone $\Lambda(p,e) \subseteq \Re^n$ simply by $\Lambda$. 
The results in this section are applicable to the so-called 
\emph{isometric} hyperbolic polynomials which were initially considered in \cite{BGLS01} and are defined as follows.
\begin{definition}[Isometric hyperbolic polynomial, {\cite[Definition~5.1]{BGLS01}}]\label{def:isometric}
	A hyperbolic polynomial $p$ is isometric if and only if for all $y, z \in \Re^n$, there exists $x \in \Re^n$ satisfying
	\[
	\lambda(x) = \lambda(z) \textrm{ and } \lambda(x+y) = \lambda(x) + \lambda(y).
	\]
\end{definition}
In general, given $y,z \in \Re^n$, $\lambda(y+z)$ does not necessarily coincide with $\lambda(y) + \lambda(z)$. 
When $p$ is isometric, it is always possible to find some $x \in \Re^n$ that has the same eigenvalues of $z$ but it is ``compatible'' with $y$ in the sense that $\lambda(x+y) = \lambda(z) + \lambda(y) = \lambda(x) + \lambda(y)$.
This implies, for example, that the image of $\Re^n$ by $\lambda(\cdot)$ is convex, which is a property that may fail for arbitrary hyperbolic polynomials, see \cite[Example~5.2]{BGLS01}.

Another important caveat about the results discussed in this section is
that they are considered with respect to a certain norm $\norm{\cdot}_p$ and inner product $\inProd{\cdot}{\cdot}_p$ derived from $p$ as follows
\begin{align}
\norm{x}_p &\coloneqq \sqrt{\lambda_1(x)^2 + \cdots + \lambda _d(x)^2} = \norm{\lambda(x)}_2 \label{eq:norm_poly}\\
\inProd{x}{y}_p & \coloneqq \frac{1}{4}\norm{x+y}_p^2 - \frac{1}{4}\norm{x-y}_p^2, \notag
\end{align}
where $\norm{\lambda(x)}_2$ indicates the usual Euclidean $2$-norm of $\lambda(x)$ in $\Re^d$.
We also assume that $\Re^d$ is equipped with the usual Euclidean inner product denoted by $\inProd{\cdot}{\cdot}_2$, so that $\norm{\lambda(x)}_2 = \sqrt{\inProd{\lambda(x)}{\lambda(x)}_2}$ holds.

Under the assumption that $p$ is complete, $\inProd{\cdot}{\cdot}_p$ is indeed an inner product, see \cite[Theorem~4.2]{BGLS01}.
With that, if $S \subseteq \Re^n$ is a subset, we have
\[
\dist(x, S) \coloneqq \inf_{y \in S } \norm{x-y}_p.
\]
Sometimes we also write $\dist(S,x)$, which is equal to  $\dist(x,S)$.
For convenience, for a singleton set, we define $\dist(x,{y}) \coloneqq \dist(x,\{y\}) = \norm{x-y}_p$.
For a closed convex $S \subseteq \Re^n$ we have
\[P_{S}(x) \coloneqq \argmin_{y \in S } \norm{x-y}_p.\]
For subsets of $\Re^d$ we define distance functions and projections analogously except that we use the Euclidean norm $\norm{\cdot}_2$ instead. 

Here are some examples of isometric hyperbolic polynomials and the corresponding inner products and norms.
\begin{example}[The hyperbolic norm $\norm{\cdot}_p$, the nonnegative orthant, the semidefinite cone and symmetric cones]\label{ex:isom}
The distance function depends on the choice of norm. On the other hand, membership on the hyperbolicity cone depends on the eigenvalues of a given element. 
Therefore, if we hope to get a closed formula for the distance function to a hyperbolicity cone it seems reasonable to demand that the norm and the eigenvalues are connected somehow.

The norm in \eqref{eq:norm_poly} that was introduced in \cite{BGLS01} is one way to enforce this connection.
Although seemingly arbitrary, it reduces to some well-known norms for certain important cones as discussed next.
Furthermore, $\norm{\cdot}_p$ is also connected to the theory of interior point methods. Recalling that $F\coloneqq-\log(p)$ is a logarithmically homogeneous self-concordant barrier for $\Lambda(p,e)$ \cite[Theorem~4.1]{guler1997hyperbolic}, its Hessian at a point in the interior of  $\Lambda(p,e)$ is positive definite and induce an inner product that is important in the analysis of interior point methods, see \cite[Theorem~7]{guler1997hyperbolic}. It turns out that 
\[
\norm{x}_p^2 = D^2F(e)[x,x],
\]
where $D^2F(e)[x,x] = \frac{d^2}{dt^2}F(e+tx)$, see \cite[Remark~4.3]{BGLS01}. 
That is, $\norm{\cdot}_p^2$  corresponds to the quadratic form used to compute the Dikin ellipsoid at $e$.

The nonnegative orthant $\Re^n_+$ can be realized as a hyperbolicity cone by taking $p : \Re^n \to \Re$ to be the polynomial $p(x) \coloneqq \prod_{i=1}^n x_i$ and $e \coloneqq (1,\ldots,1)$. 
With that, the induced inner product and norm are the usual Euclidean ones.
A proof that $p$ is isometric is given in \cite[Section~7]{BGLS98_pre}.

Let $\mathcal{S}^n$ be the space of $n\times n$ real symmetric matrices, and $\mathcal{S}^n_+$ be the cone of $n\times n$ real symmetric positive semidefinite matrices. For a matter of notational consistency, in what follows we will use lower case letters to denote matrices.
Then, letting $e$ denote the $n\times n$ identity matrix, the polynomial $\det$ on $\mathcal{S}^n$ is hyperbolic  with respect to $e$ and $\Lambda(\det, e) = \mathcal{S}^n_+$ holds.
The eigenvalue map $\lambda(\cdot):\S^n \to \Re^n$ corresponds to the usual eigenvalues of a symmetric matrix in decreasing order.
The norm induced by $\det$ is the Frobenius norm, and 
the induced inner product is  the usual trace inner product.

We present a short argument that $\det$ is isometric, see also   \cite[Section~7]{BGLS98_pre}.
Let $y,z \in \S^n$ and consider the spectral decomposition  of $y$ so that $y = u\diag(\lambda(y))u^\T$ holds, where $u$ is an $n\times n$ orthogonal matrix, $\diag(\lambda(y))$ is the diagonal matrix associated to $\lambda(y)$ and $u^\T$ is the transpose of $u$. 
Then, letting $x \coloneqq u\diag(\lambda(z))u^\T$, we have $\lambda(x) = \lambda(z)$ and $x+y = u\diag(\lambda(x)+\lambda(y))u^\T$ so that $\lambda(x+y) = \lambda(x) + \lambda(y)$ holds.

More generally, every symmetric (i.e., homogeneous self-dual) cone can be realized as a hyperbolicity cone induced by some isometric hyperbolic polynomial,  see \cite[Section~2.2]{SS08} for more details.


\end{example}

Before we move on, we recall that if $X$ is an $n\times n$ symmetric matrix,  $v_{1},\ldots v_{n}$ are $n$ orthonormal eigenvectors of $X$ and $\lambda_1,\ldots, \lambda_n$ are the corresponding eigenvalues, then the projection onto $\PSDcone{n}$ with respect to the Frobenius norm is the result of ``zeroing the negative eigenvalues in the spectral decomposition'':
\begin{equation}\label{eq:psd_proj}
P_{\PSDcone{n}}(X) = \argmin _{Y \in \PSDcone{n}} \norm{Y-X}_F = \sum _{i=1}^n \max(0,\lambda_i)v_{i}v_{i}^\T,
\end{equation}
where $\norm{\cdot}_F$ is the Frobenius norm.
This implies that the distance function with respect the Frobenius norm is given 
by
\begin{equation}\label{eq:psd_dist}
\dist(x,\PSDcone{n})^2 = \sum_{i=1}^n \min(\lambda_i,0)^2.
\end{equation}
In the remainder of this section, by imposing increasingly more restrictive assumptions, we will discuss potential analogues of \eqref{eq:psd_proj} and \eqref{eq:psd_dist} for the case of the hyperbolicity cones.

\subsection{The distance function to a hyperbolicity cone}

Let $x \in \Re^n$. Because $x$ belongs to $\Lambda$ if and only if 
$\lambda(x)  \in \Re^d_+$, in analogy to \eqref{eq:psd_dist}, a reasonable guess is that $\dist(x,\Lambda)$ satisfies
\begin{equation}\label{eq:hyper_guess}
\dist(x,\Lambda)^2 = \sum _{i=1}^d \min(\lambda_i(x),0)^2,
\end{equation}
which does indeed hold in quite a few cases, for example, when $\Lambda$ is the cone of $d\times d$ positive semidefinite real matrices. 
Unfortunately, we will see that proving this expression seems to require extra assumptions on $p$.


Recalling the definitions in \eqref{eq:lamb_inv} and \eqref{eq:lamb_S}, a general formula for $\dist(x,\Lambda)$ can be obtained by making use of the following lemma, see also {\cite[Proposition~5.3]{BBEGG10}}. 
\begin{lemma}\label{lem:iso}
	If a complete hyperbolic polynomial is isometric then
$\dist(u,\lambda(x)) = \dist(\lambda^{-1}(u),x)$ holds for all 
$u \in \lambda(\Re^n)$ and all $x \in \Re^n$.
\end{lemma}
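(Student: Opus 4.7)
The plan is to establish both directions of the equality separately. The direction $\dist(\lambda^{-1}(u), x) \leq \dist(u, \lambda(x))$ will be obtained by explicitly constructing a minimizer via the isometric property, while the direction $\dist(u, \lambda(x)) \leq \dist(\lambda^{-1}(u), x)$ is a Hoffman--Wielandt style inequality that follows from a known Ky Fan type inequality for isometric polynomials.

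First, for $\dist(\lambda^{-1}(u), x) \leq \dist(u, \lambda(x))$: since $u \in \lambda(\Re^n)$, pick some $z_0 \in \Re^n$ with $\lambda(z_0) = u$. Applying Definition~\ref{def:isometric} with $y$ and $z$ in the definition playing the roles of $x$ and $z_0$ respectively, I obtain $y^* \in \Re^n$ satisfying $\lambda(y^*) = u$ and $\lambda(y^* + x) = \lambda(y^*) + \lambda(x) = u + \lambda(x)$. Computing $\norm{y^* + x}_p^2$ in two ways---as $\norm{\lambda(y^* + x)}_2^2 = \norm{u + \lambda(x)}_2^2$ on the one hand, and via the identity $\norm{y^* + x}_p^2 = \norm{y^*}_p^2 + 2\inProd{y^*}{x}_p + \norm{x}_p^2 = \norm{u}_2^2 + 2\inProd{y^*}{x}_p + \norm{\lambda(x)}_2^2$ on the other---yields $\inProd{y^*}{x}_p = \inProd{u}{\lambda(x)}_2$. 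The analogous expansion of $\norm{y^* - x}_p^2$ then produces $\norm{y^* - x}_p^2 = \norm{u - \lambda(x)}_2^2$, so $y^* \in \lambda^{-1}(u)$ attains the desired bound.

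For the reverse inequality, I rely on the Ky Fan type inequality $\inProd{y}{x}_p \leq \inProd{\lambda(y)}{\lambda(x)}_2$ for all $x, y \in \Re^n$, a known consequence of isometry (see \cite{BGLS01}). For any $y \in \lambda^{-1}(u)$, combining this with $\norm{y}_p = \norm{\lambda(y)}_2 = \norm{u}_2$ and $\norm{x}_p = \norm{\lambda(x)}_2$ gives $\norm{y - x}_p^2 \geq \norm{u}_2^2 - 2\inProd{u}{\lambda(x)}_2 + \norm{\lambda(x)}_2^2 = \norm{u - \lambda(x)}_2^2$, and taking the infimum over $y \in \lambda^{-1}(u)$ finishes the argument.

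The main obstacle is the Ky Fan type inequality used in the second direction: unlike the first direction, it is not immediate from Definition~\ref{def:isometric} alone and must be invoked from existing work on isometric polynomials. The first direction, in contrast, is a clean consequence of the isometric property combined with the bilinear structure of the induced inner product $\inProd{\cdot}{\cdot}_p$.
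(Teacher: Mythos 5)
Your proof is correct and rests on the same two ingredients as the paper's: the isometric construction of a ``compatible'' $y^*$ with $\lambda(y^*)=u$ and $\lambda(y^*+x)=\lambda(y^*)+\lambda(x)$, and the Ky~Fan/von~Neumann-type inequality $\inProd{y}{x}_p\leq\inProd{\lambda(y)}{\lambda(x)}_2$. You organize it more transparently as two separate inequalities, whereas the paper starts from a minimizer $z$ of $\dist(\lambda^{-1}(u),x)$, builds the same $y$, and chains inequalities to show $\norm{z-x}_p^2\geq\norm{x-y}_p^2\geq\dist(\lambda^{-1}(u),x)^2=\norm{z-x}_p^2$, forcing equality and then equating with $\norm{\lambda(x)-u}_2^2$. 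The two are mathematically the same argument in different packaging; your version avoids any implicit appeal to attainment of the infimum over $\lambda^{-1}(u)$, which is a small clarity gain. One imprecision: the inequality $\inProd{y}{x}_p\leq\inProd{\lambda(y)}{\lambda(x)}_2$ is not a consequence of isometry---it holds for all complete hyperbolic polynomials (the paper cites Proposition~4.4 of Bauschke--Borwein et al.\ for this, not the isometry literature), so you should attribute it accordingly rather than describing it as following from isometry.
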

\begin{proof}
See Appendix~\ref{app:lem_iso}.
\end{proof}

\begin{proposition}[The distance to an isometric hyperbolicity cone]\label{prop:dist_hyperbolicity}
	Let $\Lambda = \Lambda(p,e)$ be a hyperbolicity cone, $p$ a complete isometric hyperbolic polynomial and let $x \in \Re^n$. 
	We have
	\[
	\dist(x, \Lambda) = \inf _{u \in  \lambda(\Lambda)  }\norm{\lambda(x)-u}_2.
	\]	
	In particular, if $\lambda(\Lambda) = \Re^d_+ \cap \orR{d}$ holds, then 
	$
	\dist(x, \Lambda)^2 = \sum _{i=1}^d \min(\lambda_i(x),0)^2.
	$
\end{proposition}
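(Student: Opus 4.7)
\bigskip

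\noindent\textbf{Proof proposal.} The plan is to reduce the computation of $\dist(x,\Lambda)$ in $\Re^n$ to a computation of $\dist(\lambda(x),\lambda(\Lambda))$ in $\Re^d$ by using the slicing of $\Lambda$ according to the level sets of the eigenvalue map, and then invoke Lemma~\ref{lem:iso}.

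First I would observe that $\Lambda$ can be written as the disjoint union
\[
\Lambda = \bigcup _{u \in \lambda(\Lambda)} \lambda^{-1}(u).
\]
The inclusion $\supseteq$ is the non-trivial direction: if $u \in \lambda(\Lambda)$ then there is $y' \in \Lambda$ with $\lambda(y') = u$, so $u \in \Re^d_+$ componentwise; hence any $y \in \lambda^{-1}(u)$ has $\lambda(y) = u \geq 0$, which by the very definition of $\Lambda$ forces $y \in \Lambda$. The inclusion $\subseteq$ is immediate since every $y \in \Lambda$ lies in $\lambda^{-1}(\lambda(y))$ and $\lambda(y) \in \lambda(\Lambda)$. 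Consequently
\[
\dist(x,\Lambda) = \inf _{u \in \lambda(\Lambda)} \dist(x,\lambda^{-1}(u)).
\]

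Since every $u \in \lambda(\Lambda)$ satisfies $u \in \lambda(\Re^n)$, Lemma~\ref{lem:iso} (which is where the isometry hypothesis enters) gives
\[
\dist(x,\lambda^{-1}(u)) = \dist(\lambda(x),u) = \norm{\lambda(x)-u}_2,
\]
and plugging this into the previous display yields the first claim
\[
\dist(x,\Lambda) = \inf _{u \in \lambda(\Lambda)} \norm{\lambda(x)-u}_2.
\]

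For the second claim, I would specialize to the case $\lambda(\Lambda) = \Re^d_+ \cap \orR{d}$ and compute the Euclidean projection of $\lambda(x)$ onto this set. The candidate minimizer is $u^* \in \Re^d$ defined componentwise by $u^*_i \coloneqq \max(\lambda_i(x),0)$. Since $\lambda(x) \in \orR{d}$ is already weakly decreasing and the map $t \mapsto \max(t,0)$ is monotone, $u^*$ remains in $\orR{d}$ and clearly lies in $\Re^d_+$, hence in $\lambda(\Lambda)$. The point $u^*$ is also the unconstrained Euclidean projection of $\lambda(x)$ onto the larger set $\Re^d_+$, because in that larger problem the objective decouples into the coordinatewise minimizations $\min_{u_i \geq 0}(\lambda_i(x)-u_i)^2$. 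Therefore for any $u \in \Re^d_+ \cap \orR{d} \subseteq \Re^d_+$,
\[
\norm{\lambda(x)-u}_2^2 \geq \norm{\lambda(x)-u^*}_2^2 = \sum _{i=1}^d \min(\lambda_i(x),0)^2,
\]
and equality is attained at $u = u^*$, giving the claimed formula.

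The main obstacle is really packaged into Lemma~\ref{lem:iso}: once the isometry is used there to move the computation to $\Re^d$, everything that remains is a routine slicing argument and an elementary Euclidean projection. The only subtlety to be careful with is verifying the inclusion $\lambda^{-1}(u) \subseteq \Lambda$ for $u \in \lambda(\Lambda)$, which is what makes the slicing identity $\Lambda = \bigcup_{u \in \lambda(\Lambda)} \lambda^{-1}(u)$ valid.
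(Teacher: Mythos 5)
Your proof is correct and follows essentially the same route as the paper: decompose $\Lambda$ into the level sets $\lambda^{-1}(u)$ over $u \in \lambda(\Lambda)$, apply Lemma~\ref{lem:iso} to pass to distances in $\Re^d$, and for the second claim observe that the Euclidean projection of $\lambda(x)$ onto $\Re^d_+$ already lies in $\Re^d_+ \cap \orR{d}$ because $\lambda(x)$ is weakly decreasing. The only cosmetic difference is that you make explicit the inclusion $\lambda^{-1}(u) \subseteq \Lambda$ for $u \in \lambda(\Lambda)$, a point the paper leaves tacit.
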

\begin{proof}
First, we note that that $\Lambda$ can be written as an union of sets that correspond to elements that have the same eigenvalues, i.e., 
$\Lambda = \bigcup _{u \in \lambda(\Lambda)} \lambda^{-1}(u)$. With that, we have
\begin{align}
\dist(x, \Lambda) = \min _{y \in \Lambda} \dist(x,y) & = \min_{u \in \lambda(\Lambda)}\left[ \min _{y \in \lambda^{-1}(u)} \dist(y,x) \right]\notag\\
& = \min_{u \in \lambda(\Lambda)}\left[ \dist(\lambda^{-1}(u),x) \right] \notag\\
& = \min _{u \in \lambda(\Lambda)} \dist(u,\lambda(x)), \label{eq:iso}
\end{align}	
where the last equality follows from Lemma~\ref{lem:iso}. This shows the first half of the proposition. If additionally $\lambda(\Lambda) = \Re^d_+ \cap \orR{d}$ holds, we have
\begin{equation}\label{eq:iso1}
\min _{u \in \lambda(\Lambda)} \dist(u,\lambda(x)) = \min _{u \in \Re^d_+ \cap \orR{d}}\norm{\lambda(x)-u}_2 \geq \min _{u \in \Re^d_+}\norm{\lambda(x)-u}_2 = \norm{\lambda(x) - \lambda(x)^+}_2,
\end{equation}
where $\lambda(x)^+$ is the projection of $\lambda(x)$ onto $\Re^d_+$, which  is obtained by zeroing the negative components of $\lambda(x)$. Since  $\lambda(x)\in \orR{d}$ holds, we also have
$\lambda(x)^+ \in \orR{d}$. We conclude that
\begin{equation}\label{eq:iso2}
\norm{\lambda(x) - \lambda(x)^+}_2 \geq \min _{u \in \Re^d_+ \cap \orR{d}}\norm{\lambda(x)-u}_2.
\end{equation}
From \eqref{eq:iso1} and \eqref{eq:iso2} we have
$\norm{\lambda(x) - \lambda(x)^+}_2 = \min _{u \in \lambda(\Lambda)} \dist(u,\lambda(x))$. This, together with \eqref{eq:iso}, leads to $\dist(x, \Lambda) = \norm{\lambda(x) - \lambda(x)^+}_2 $ which implies $\dist(x, \Lambda)^2 = \sum _{i=1}^d \min(\lambda_i(x),0)^2.
$	
\end{proof}

We now take a look at the requirement that $\lambda(\Lambda)$  coincides with $\Re^d_+ \cap \orR{d}$. By construction, $\lambda(\Lambda)$ is always
contained in $\Re^d_+ \cap \orR{d}$, so the nontrivial part is the opposite containment. 
As we will see in the next lemma, the condition $\lambda(\Lambda) = \Re^d_+ \cap \orR{d}$ is equivalent to $\lambda(\Re^n) = \Re^d_{\downarrow}$ when $p$ is isometric.
The significance of the equality $\lambda(\Re^n) = \Re^d_{\downarrow}$ is that it makes it possible to prove certain results on the Fenchel conjugate of spectral functions, see 
\cite[Theorem~5.5]{BGLS01}.
The equality $\lambda(\Re^n) = \Re^d_{\downarrow}$ also indicates that every element in $\Re^d_{\downarrow}$ is an eigenvalue vector of some $x \in \Re^n$.
\begin{lemma}\label{lem:dist_con}
Suppose that $p:\Re^n \to \Re$ is a complete isometric hyperbolic polynomial.
Then, $\lambda(\Lambda) = \Re^d_+ \cap \orR{d}$ holds if and only if 
$\lambda(\Re^n) = \orR{d}$.
\end{lemma}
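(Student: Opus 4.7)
The plan is to prove both directions directly from the translation identity $\lambda(x - te) = \lambda(x) - t\mathbf{1}$, which was already used inside the proof of Proposition~\ref{prop:hyp_min_eig} and does not require isometry. The isometry hypothesis will turn out not to be essential for this particular equivalence; it is inherited from the hypotheses of the ambient discussion. I expect the argument to be short: the only real content is to shift an arbitrary ordered vector into the nonnegative orthant by adding a multiple of $\mathbf{1}$.

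For the implication $\lambda(\Re^n)=\orR{d}\Rightarrow \lambda(\Lambda)=\Re^d_+\cap\orR{d}$, the inclusion $\lambda(\Lambda)\subseteq \Re^d_+\cap\orR{d}$ is immediate from the definition of the hyperbolicity cone together with the convention that coordinates of $\lambda(\cdot)$ are in decreasing order. For the reverse inclusion I would take any $u\in\Re^d_+\cap\orR{d}$, use the hypothesis to find $x\in\Re^n$ with $\lambda(x)=u$, and then note that $u\geq 0$ gives $x\in\Lambda$ by the definition of $\Lambda$ via its smallest eigenvalue, so $u\in\lambda(\Lambda)$.

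For the converse implication $\lambda(\Lambda)=\Re^d_+\cap\orR{d}\Rightarrow \lambda(\Re^n)=\orR{d}$, again the inclusion $\lambda(\Re^n)\subseteq\orR{d}$ is built in by ordering. For the other direction, given $u\in\orR{d}$, I would choose $c\geq 0$ large enough that $u+c\mathbf{1}\in\Re^d_+\cap\orR{d}$ (for instance $c=\max(0,-u_d)$). By hypothesis there exists $x\in\Lambda$ with $\lambda(x)=u+c\mathbf{1}$. Setting $y\coloneqq x-ce$ and applying the identity $\lambda(x-ce)=\lambda(x)-c\mathbf{1}$ yields $\lambda(y)=u$, hence $u\in\lambda(\Re^n)$.

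There is no real obstacle here; the only small point to verify cleanly is the translation identity $\lambda(x-ce)=\lambda(x)-c\mathbf{1}$, but this follows from the fact that $\gamma$ is a root of $s\mapsto p(x-es)$ iff $\gamma-c$ is a root of $s\mapsto p(x-ce-es)$, which was already established in the excerpt. So the proof will essentially be two short paragraphs mirroring the two directions above.
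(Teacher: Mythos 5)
Your proof is correct, and it takes a genuinely different (and cleaner) route than the paper's. The paper's argument for the direction $\lambda(\Lambda)=\Re^d_+\cap\orR{d}\Rightarrow\lambda(\Re^n)=\orR{d}$ splits $u=u^++u^-$ into its positive and negative parts, finds $y,z\in\Lambda$ with $\lambda(y)=u^+$ and $\lambda(z)=(-u^-)^\downarrow$, observes $\lambda(-z)=u^-$, and then invokes the isometry hypothesis to produce an $x$ with $\lambda(x)=\lambda(-z)$ and $\lambda(x+y)=\lambda(x)+\lambda(y)=u$. Your translation trick --- shifting $u$ by $c\mathbf{1}$ with $c=\max(0,-u_d)$ to land in $\Re^d_+\cap\orR{d}$, pulling back an $x\in\Lambda$ with $\lambda(x)=u+c\mathbf{1}$, and then setting $y=x-ce$ so that $\lambda(y)=\lambda(x)-c\mathbf{1}=u$ --- accomplishes the same thing using only the identity $\lambda(x-ce)=\lambda(x)-c\mathbf{1}$, which holds for any hyperbolic polynomial. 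Your observation that the isometry hypothesis is not actually needed is correct: your argument establishes the equivalence for an arbitrary complete hyperbolic polynomial, so it is strictly more general than the paper's. (The paper likely carries the isometry assumption simply because it is a standing hypothesis for the surrounding results, such as Proposition~\ref{prop:dist_hyperbolicity}, where it is genuinely used.) The forward direction is essentially identical in both proofs.
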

\begin{proof}
Suppose $\lambda(\Re^n) = \orR{d}$. As we have observed,
 $\lambda(\Lambda) \subseteq \Re^d_+ \cap \orR{d}$ always holds, so 
 let us prove the opposite containment. If $u \in \Re^d_+ \cap \orR{d}$,
 since $\lambda(\Re^n) = \orR{d}$, there exists $x \in \Re^n$ such 
 that $\lambda(x) = u$. Since $u \in \Re^d_+$, such an $x$ must belong 
 to $\Lambda$.
 
Next, suppose that $\lambda(\Lambda) = \Re^d_+ \cap \orR{d}$. Since $\lambda(\Re^n) \subseteq \orR{d}$ always holds, let us prove the 
opposite containment. Let $u \in \orR{d}$. Then, $u$ can be written as the sum of two elements in $\orR{d}$ corresponding to 
the positive and negative components of $u$, i.e.,
\[
u = u^+  + u^-.
\]
Since $u^+ \in \Re^d_+$ as well, by hypothesis, there exists 
$y \in \Lambda$ such that $\lambda(y) = u^+$.
Similarly, there is  $z \in \Lambda$ such that 
$\lambda(z) = (-u^-)^\downarrow$.
Since the  eigenvalues of $-z$ are the 
negatives of the eigenvalues of $z$, we have
$\lambda(-z) = u^-$.

The polynomial $p$ is assumed to be isometric,  so there exists 
$x$ such that $\lambda(x) = \lambda(-z)$ and 
$\lambda(x+y) = \lambda(x) + \lambda(y)$.
That is, 
we have
\[
\lambda(x+y) = \lambda(x) + \lambda(y) = u^- + u^+ = u.
\]
\end{proof}

\subsection{Partial results on the projection operator}\label{sec:proj_op}
Having discussed distance functions, we now take a look 
at how to actually project an arbitrary point onto a hyperbolicity cone. 
If $\Lambda$ is the cone of positive semidefinite matrices, computing 
the projection is easy: we just compute a spectral decomposition of $x \in \S^n$ and zero its negative eigenvalues.
However, when $\Lambda$ is an arbitrary hyperbolicity cone, the analogy to positive semidefinite matrices does not seem to take us very far. A difficulty is that, although we have a generalized notion of eigenvalues, there is no obvious notion of ``eigenvectors''. Similarly, no obvious notion of spectral decomposition exists when $p$ is an arbitrary hyperbolic polynomial.

That said, if $p$ is isometric and each eigenvalue of $\lambda_i(x)$ is simple, i.e., a root of multiplicity one for the polynomial univariate polynomial $t \mapsto p(x-te)$, then the projection of $x$ onto $\Lambda$ can be represented in closed form. 
We use results from \cite{BGLS01}  in our proof.

\begin{proposition}
  \label{prop:projection}
  Let $\Lambda = \Lambda(p,e)$ be a  hyperbolicity cone and suppose that $p$ is complete, isometric and $\lambda(\Re^n) = \orR{d}$ holds.
  Define $f:\mathbb{R}^d \rightarrow \mathbb{R}$ as $f(x) \coloneqq \frac{1}{2}\sum_{i=1}^d \max \{x_i,0\}^2$.
  Then, $f\circ \lambda$ is convex differentiable, and for all $x\in\Re^n$,
  \begin{equation*}
    P_\Lambda(x) = \nabla (f\circ \lambda)(x).
  \end{equation*}
\end{proposition}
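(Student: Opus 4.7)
The plan is to recognize $f\circ\lambda$ as a Moreau envelope in the Hilbert space $(\Re^n,\inProd{\cdot}{\cdot}_p)$; once this is done, convexity, $C^1$-differentiability, and the explicit gradient formula all follow from standard Moreau-envelope calculus, bypassing any direct attempt to differentiate a spectral function.

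First I would exploit the elementary pointwise identity $a^2 = \max(a,0)^2 + \min(a,0)^2$ for $a\in\Re$. Applying it to each eigenvalue $\lambda_i(x)$ and summing gives
\[
\norm{x}_p^2 \;=\; \norm{\lambda(x)}_2^2 \;=\; 2(f\circ\lambda)(x) + \sum_{i=1}^d \min(\lambda_i(x),0)^2.
\]
Under the standing hypotheses ($p$ complete, isometric, and $\lambda(\Re^n)=\orR{d}$), Lemma~\ref{lem:dist_con} yields $\lambda(\Lambda)=\Re^d_+\cap\orR{d}$, and Proposition~\ref{prop:dist_hyperbolicity} then identifies the trailing sum with $\dist(x,\Lambda)^2$, so that
\[
2(f\circ\lambda)(x) \;=\; \norm{x}_p^2 - \dist(x,\Lambda)^2.
\]

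Second, I would unpack the right-hand side using Moreau's cone decomposition in $(\Re^n,\inProd{\cdot}{\cdot}_p)$, which is a genuine Hilbert space because $\inProd{\cdot}{\cdot}_p$ is an inner product by completeness of $p$. Let $\Lambda^\circ$ denote the polar of $\Lambda$ with respect to $\inProd{\cdot}{\cdot}_p$. Moreau gives $x = P_\Lambda(x) + P_{\Lambda^\circ}(x)$ with $\inProd{P_\Lambda(x)}{P_{\Lambda^\circ}(x)}_p = 0$, so expanding $\norm{x}_p^2 = \norm{P_\Lambda(x)+(x-P_\Lambda(x))}_p^2$ and cancelling the vanishing cross term gives $\norm{x}_p^2 - \dist(x,\Lambda)^2 = \norm{P_\Lambda(x)}_p^2$. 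Substituting back,
\[
(f\circ\lambda)(x) \;=\; \tfrac{1}{2}\norm{P_\Lambda(x)}_p^2 \;=\; \tfrac{1}{2}\dist(x,\Lambda^\circ)^2,
\]
where the last equality uses $\norm{P_\Lambda(x)}_p = \norm{x - P_{\Lambda^\circ}(x)}_p = \dist(x,\Lambda^\circ)$.

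The expression on the right is the Moreau envelope of the indicator $\delta_{\Lambda^\circ}$, which in any Hilbert space is convex, continuously differentiable, and has gradient $x - P_{\Lambda^\circ}(x) = P_\Lambda(x)$; this delivers the entire conclusion of the proposition. The main obstacle is conceptual: one must spot the Moreau-envelope structure hidden inside $f\circ\lambda$ via the identity $a^2 = \max(a,0)^2 + \min(a,0)^2$. Once that identity is combined with the distance formula of Proposition~\ref{prop:dist_hyperbolicity}, the rest is standard Hilbert-space convex analysis, with no need to invoke any hyperbolic-polynomial-specific differentiability result such as those in \cite{BGLS01}.
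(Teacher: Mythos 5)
Your proof is correct, and it follows a genuinely different route from the paper's. The paper invokes the spectral-function machinery of Bauschke--G\"uler--Lewis--Sendov directly: \cite[Theorem~3.9]{BGLS01} supplies convexity of $f\circ\lambda$, \cite[Theorem~5.5]{BGLS01} gives a characterization of $\partial(f\circ\lambda)(x)$ in terms of $\partial f(\lambda(x))$ together with the ``additivity'' condition $\inProd{x}{y}_p=\inProd{\lambda(x)}{\lambda(y)}_2$, and the paper then argues that any subgradient $y$ necessarily lies in $\Lambda$ and attains $\norm{x-y}_p^2=\sum_i\min(\lambda_i(x),0)^2$, hence equals $P_\Lambda(x)$; differentiability then drops out from the subdifferential being a singleton. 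You instead bypass Theorems 3.9 and 5.5 entirely: the pointwise identity $a^2=\max(a,0)^2+\min(a,0)^2$ applied eigenvalue-wise, combined with Lemma~\ref{lem:dist_con} and Proposition~\ref{prop:dist_hyperbolicity} (which your proof still needs), yields $2(f\circ\lambda)(x)=\norm{x}_p^2-\dist(x,\Lambda)^2$; Moreau's conical decomposition in $(\Re^n,\inProd{\cdot}{\cdot}_p)$ then collapses this to $\norm{P_\Lambda(x)}_p^2=\dist(x,\Lambda^\circ)^2$, exhibiting $f\circ\lambda$ as the Moreau envelope $\tfrac12\dist(\cdot,\Lambda^\circ)^2$, from which convexity, $C^1$-smoothness and the gradient formula $\nabla(f\circ\lambda)=P_\Lambda$ all follow at once from standard Hilbert-space convex analysis. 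The trade-off is roughly this: the paper's argument is a direct application of existing results once you know to look at \cite{BGLS01}, whereas your argument requires spotting the Moreau-envelope structure but is then self-contained modulo the distance formula of Proposition~\ref{prop:dist_hyperbolicity}, and arguably makes the conceptual content of the statement (that $f\circ\lambda$ is a half-squared-distance to a polar cone) more transparent. Both proofs depend equally on the isometric hypothesis and on $\lambda(\Re^n)=\orR d$, since both route through Proposition~\ref{prop:dist_hyperbolicity} and Lemma~\ref{lem:dist_con}.
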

\begin{proof}
	Under the stated assumptions, \cite[Theorem~3.9]{BGLS01} implies convexity of $f \circ \lambda$ and \cite[Theorem 5.5]{BGLS01} leads to the following equivalence.
  \begin{equation}\label{eq:Bauschke_symmetric}
    y \in \partial (f\circ \lambda)(x) \iff \lambda(y) \in \partial f(\lambda(x)) \:\:\text{and}\:\:
    \langle x,y\rangle_p = \langle \lambda(x),\lambda(y)\rangle_2.
  \end{equation}
  Because  $\norm{\lambda(x)}_2 = \norm{x}_p$ holds, expanding $\norm{x-y}_p^2$ we also obtain the following equivalence
  \begin{equation}\label{eq:property}
    \langle x,y\rangle_p = \langle \lambda(x),\lambda(y)\rangle_2 \iff \| \lambda(x)-\lambda(y)\|_2 = \norm{x-y}_p.
  \end{equation}  
  By definition, $f$ is differentiable and $\nabla f$ is the projection operator onto $\Re^n_+$, i.e.
  \begin{equation}\label{eq:proj_onto_nonnegative}
    \nabla f = P_{\Re_+^d}.
  \end{equation}
  By \eqref{eq:property} and \eqref{eq:proj_onto_nonnegative}, \eqref{eq:Bauschke_symmetric} can be rewritten as
  \begin{equation}\label{eq:proj_sub}
    y \in \partial (f\circ \lambda)(x) \iff \lambda(y) = P_{\Re_+^d}(\lambda(x)) \:\:\text{and}\:\:
    \| \lambda(x)-\lambda(y)\|_2 = \|x-y\|_p.
  \end{equation}
Let $y \in \partial (f\circ \lambda)(x)$, then in view of \eqref{eq:proj_sub} we have $\norm{x-y}_p^2 = \sum _{i=1}^d \min(0,\lambda_i(x))^2$. Since $\lambda(y) \in \Re^d_+$, we have $y \in \Lambda$, so by Lemma~\ref{lem:dist_con} and Proposition~\ref{prop:dist_hyperbolicity}, $y$ is precisely the projection of $x$ onto $\Lambda$.


%
Since the projection onto a closed convex set is unique, the subdifferential $\partial (f\circ \lambda)(x)$ is a singleton. 
This means that $f\circ \lambda$ is differentiable at $x$ and $\nabla(f\circ \lambda)$ is the projection onto $\Lambda$.
\end{proof}

\begin{corollary}
  In addition to the assumption in Proposition~\ref{prop:projection}, suppose each eigenvalue $\lambda_i(x)$ is simple. Then,
  \begin{equation*}
    P_\Lambda(x) = \sum_{i=1}^d \max(\lambda_i(x),0) \frac{1}{p^{(1)}(x-\lambda_i(x)e)}\nabla p(x-\lambda_i(x)e)
  \end{equation*}
\end{corollary}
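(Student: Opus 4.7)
The plan is to unpack the conclusion $P_\Lambda(x) = \nabla(f\circ\lambda)(x)$ of Proposition~\ref{prop:projection} via the chain rule, once each individual eigenvalue function $\lambda_i$ has been shown to be smooth near $x$. Because $f(u) = \frac{1}{2}\sum_{i=1}^d \max(u_i,0)^2$ is separable and continuously differentiable with $\partial f/\partial u_i(u) = \max(u_i,0)$, the chain rule will give
\[
\nabla(f\circ\lambda)(x) \;=\; \sum_{i=1}^d \max(\lambda_i(x),0)\,\nabla\lambda_i(x),
\]
so the task reduces to computing $\nabla\lambda_i(x)$ under the simple-eigenvalue hypothesis.

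To this end, first I would invoke the implicit function theorem. Define $q:\Re^n\times\Re\to\Re$ by $q(y,t)\coloneqq p(y-te)$, so that $q(x,\lambda_i(x))=0$ for each $i$. Because $\lambda_i(x)$ is a \emph{simple} root of $t\mapsto p(x-te)$, we have
\[
\frac{\partial q}{\partial t}(x,\lambda_i(x)) \;=\; -p^{(1)}(x-\lambda_i(x)e) \;\neq\; 0,
\]
and the implicit function theorem furnishes a smooth local branch $y\mapsto \lambda_i(y)$ around $x$. Differentiating the identity $p(y-\lambda_i(y)e)=0$ in the direction $v$ at $y=x$ gives
\[
\inProd{\nabla p(x-\lambda_i(x)e)}{v}_p \;-\; p^{(1)}(x-\lambda_i(x)e)\cdot D\lambda_i(x)[v] \;=\; 0,
\]
from which I read off
\[
\nabla\lambda_i(x) \;=\; \frac{1}{p^{(1)}(x-\lambda_i(x)e)}\,\nabla p(x-\lambda_i(x)e),
\]
with gradients taken with respect to the inner product $\inProd{\cdot}{\cdot}_p$ fixed throughout the section. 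Substituting this into the chain rule display above and using Proposition~\ref{prop:projection} yields the claimed formula.

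The main points to watch are (i) verifying that the simple-eigenvalue assumption really does supply a single-valued smooth branch of $\lambda_i$ (rather than a crossing of branches, which is what happens for repeated roots and is the reason this corollary needs the extra hypothesis), and (ii) keeping the inner product used to define $\nabla p$ consistent with the one in which $P_\Lambda$ and $\nabla(f\circ\lambda)$ are computed, namely $\inProd{\cdot}{\cdot}_p$; the chain-rule identification works in any inner product, but the two sides of the final formula must be interpreted in the same one. Neither point is a deep obstacle, and the argument is essentially a one-line implicit-function computation glued to the separability of $f$.
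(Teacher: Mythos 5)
Your argument is correct and follows essentially the same route as the paper: apply the chain rule to $P_\Lambda(x)=\nabla(f\circ\lambda)(x)$ from Proposition~\ref{prop:projection} and plug in $\nabla\lambda_i(x)=\nabla p(x-\lambda_i(x)e)/p^{(1)}(x-\lambda_i(x)e)$. The only difference is that the paper simply cites Renegar's Equation~(3.5) for this last identity, whereas you re-derive it from the implicit function theorem; your remark that the formula is inner-product covariant (so the choice of $\inProd{\cdot}{\cdot}_p$ versus the Euclidean inner product is immaterial) is a small but worthwhile point of care that the paper leaves implicit.
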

\begin{proof}
 If each eigenvalue $\lambda_i(x)$ is simple, then $x \mapsto \lambda_i(x)$ is analytic in a neighbourhood of $x$, and
\begin{equation}\label{eq:lambda_deriv}
\nabla \lambda_i(x) = \frac{1}{p^{(1)}(x-\lambda_i(x)e)}\nabla p(x-\lambda_i(x)e)
\end{equation}
holds, see Section~3.1 and Equation~(3.5) in \cite{renegar2019accelerated}.	
From Proposition~\ref{prop:projection} we have
  \begin{equation*}
    P_\Lambda(x) = \nabla(f\circ \lambda)(x).
  \end{equation*}
  From the definition of $f$ and \eqref{eq:lambda_deriv} we have
  \begin{align*}
    P_\Lambda(x) = \nabla(f\circ \lambda)(x) = & \sum_{i=1}^d\max(\lambda_i(x),0) \nabla \lambda_i(x)\\
    & \sum_{i=1}^d\max(\lambda_i(x),0) \frac{1}{p^{(1)}(x-\lambda_i(x)e)}\nabla p(x-\lambda_i(x)e).
  \end{align*}
\end{proof}

In \cite[Definition~6.3]{BGLS98_pre}, the authors introduced a notion of diagonalizable 
hyperbolic polynomial. In what follows, let $\mathcal{G}_p$ denote the set 
of  linear transformations $A:\Re^n \to \Re^n$ satisfying
$A^* = A^{-1}$ and $\lambda(x) = \lambda(A(x))$ for all $x \in \Re^n$. With that, $\mathcal{G}_p$ is a closed subgroup of the  group of orthogonal linear transformations of $\Re^n$, where the operation is the usual function composition.

\begin{definition}\label{def:diag}
A hyperbolic polynomial $p$ is said to be \emph{diagonalizable} if there exists a linear map $\Delta : \spanVec \lambda(\Re^n) \to \Re^n$ such that the following two properties are satisfied.
\begin{enumerate}[$(i)$]
	\item $\Delta$ is an isometry, i.e., $\norm{u}_2 = \norm{\Delta(u)}_p$ for all $u \in \spanVec \lambda(\Re^n) $,
	\item for every $x \in \Re^n$, there exists $A \in \mathcal{G}_p$ such that $x = A(\Delta(\lambda(x)))$.
\end{enumerate}	
\end{definition}

The determinant polynomial over the $n\times n$ symmetric matrices $\det: \S^n \to \mathbb{R}$ is diagonalizable in the sense of Definition~\ref{def:diag} and a suitable $\Delta$ is given by the map that takes a vector in $\Re^n$ and 
maps to the corresponding diagonal matrix in $\S^n$.
In this case, $\mathcal{G}_{\det}$ corresponds to the linear transformations on $\S^n$ that preserve the eigenvalue map. It is a non-trivial but well-known fact that every such transformation must be of the form 
$x \mapsto vxv^\T$ for some  $n\times n$ orthogonal matrix $v$, where $v^\T$ indicates the usual matrix transpose. 
This follows, for example, from the results contained in \cite[Section~5]{Li79}. 
Overall, for the case of the determinant polynomial $\det: \S^n \to \mathbb{R}$, Definition~\ref{def:diag} expresses the well-known fact that every symmetric matrix has a spectral decomposition and the linear function that takes a vector in $\Re^n$ to its corresponding diagonal matrix in $\mathcal{S}^n$ is an isometry.
For other examples of diagonalizable hyperbolic polynomials see \cite[Section~7]{BGLS98_pre}.
Diagonalizable hyperbolic polynomials must be isometric by \cite[Theorem~6.5]{BGLS98_pre}.

We conclude this subsection with a result that provides the closest analogue to \eqref{eq:psd_proj} we were able to find for hyperbolic polynomials.

\begin{proposition}\label{prop:proj}
 Let $\Lambda = \Lambda(p,e)$ be a  hyperbolicity cone and suppose that $p$ is complete, diagonalizable and $\lambda(\Re^n) = \orR{d}$ holds.
 For $x \in \Re^n$, let $A \in \mathcal{G}_p$ be such that 
 $x = A( \Delta(\lambda(x)))$ holds. Then, we have
 \[
  P_\Lambda(x) = A (\Delta(\lambda(x)_+)),
 \] 
where $\lambda(x)_+$ is the projection of $\lambda(x)$ onto $\Re^n_+$.
\end{proposition}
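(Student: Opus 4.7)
The plan is to show directly that the candidate point $y \coloneqq A(\Delta(\lambda(x)_+))$ is (i) feasible, i.e., $y \in \Lambda$, and (ii) satisfies $\|x-y\|_p = \dist(x,\Lambda)$. Since $(\Re^n, \inProd{\cdot}{\cdot}_p)$ is an inner-product space and $\Lambda$ is closed and convex, the projection onto $\Lambda$ is the unique minimizer of $\|x-\cdot\|_p$ over $\Lambda$, so these two properties would identify $y$ with $P_\Lambda(x)$.

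For feasibility, the key intermediate fact I would establish first is that $\lambda(\Delta(u)) = u$ for every $u \in \lambda(\Re^n) = \orR{d}$. Given such a $u$, the hypothesis $\lambda(\Re^n)=\orR{d}$ yields some $w \in \Re^n$ with $\lambda(w)=u$, and diagonalizability applied to $w$ produces $A' \in \mathcal{G}_p$ with $w = A'(\Delta(u))$; since every element of $\mathcal{G}_p$ preserves eigenvalues, $\lambda(\Delta(u)) = \lambda(A'(\Delta(u))) = \lambda(w) = u$. Now $\lambda(x)_+$ lies in $\orR{d}\cap \Re^d_+$ (zeroing the negative entries of a decreasing vector preserves monotonicity), so applying the identity and then $\lambda\circ A = \lambda$ gives $\lambda(y) = \lambda(\Delta(\lambda(x)_+)) = \lambda(x)_+ \geq 0$, which shows $y \in \Lambda$.

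For the distance, I would use linearity of $A$ and $\Delta$ to write $x - y = A(\Delta(\lambda(x)-\lambda(x)_+))$. Because $\|z\|_p = \|\lambda(z)\|_2$ and every $A \in \mathcal{G}_p$ preserves $\lambda$, any such $A$ is automatically a $\|\cdot\|_p$-isometry. Combining this with the isometry property of $\Delta$ on $\spanVec \lambda(\Re^n) = \Re^d$ (note $\orR{d}$ spans $\Re^d$ since it contains the linearly independent vectors $e_1,\,e_1+e_2,\ldots,\,e_1+\cdots+e_d$) yields
\[
\|x-y\|_p \;=\; \|\Delta(\lambda(x)-\lambda(x)_+)\|_p \;=\; \|\lambda(x)-\lambda(x)_+\|_2.
\]
Since diagonalizable polynomials are isometric by \cite[Theorem~6.5]{BGLS98_pre}, Lemma~\ref{lem:dist_con} gives $\lambda(\Lambda) = \Re^d_+\cap\orR{d}$, so Proposition~\ref{prop:dist_hyperbolicity} provides $\dist(x,\Lambda)^2 = \sum_{i=1}^d \min(\lambda_i(x),0)^2 = \|\lambda(x)-\lambda(x)_+\|_2^2$. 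Combining this with the previous display shows $\|x-y\|_p = \dist(x,\Lambda)$, and uniqueness of the projection finishes the argument.

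The main (and only mild) obstacle is the identity $\lambda(\Delta(u)) = u$ on $\orR{d}$: it is tempting to read it off the definition of $\Delta$, but Definition~\ref{def:diag} only tells us how to decompose arbitrary elements of $\Re^n$, not how $\lambda$ behaves on the image of $\Delta$. The fix is to apply diagonalizability to a preimage $w$ of $u$ rather than to $\Delta(u)$ itself and exploit invariance of $\lambda$ under $\mathcal{G}_p$; everything else is a book-keeping exercise combining the two isometry properties built into Definition~\ref{def:diag}.
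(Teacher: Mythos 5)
Your proposal is correct and follows essentially the same route as the paper's proof: define $y = A(\Delta(\lambda(x)_+))$, verify $y \in \Lambda$ via diagonalizability applied to a preimage of $\lambda(x)_+$, use the isometry properties of $A$ and $\Delta$ together with Proposition~\ref{prop:dist_hyperbolicity} to show $\|x-y\|_p = \dist(x,\Lambda)$, and conclude by uniqueness of projections. Your extra observations (e.g., isolating $\lambda(\Delta(u)) = u$ as a standalone step, noting that every $A\in\mathcal{G}_p$ is automatically a $\|\cdot\|_p$-isometry since it preserves $\lambda$) are just more explicit renderings of what the paper does implicitly.
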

\begin{proof}
Since $p$ is diagonalizable, it is also isometric by \cite[Theorem~6.5]{BGLS98_pre}.
Therefore, by Lemma~\ref{lem:dist_con} and Proposition~\ref{prop:dist_hyperbolicity} we have
\[
\dist(x, \Lambda)^2 = \sum _{i=1}^d \min(\lambda_i(x),0)^2.
\]
Let $y \coloneqq A( \Delta(\lambda(x)_+))$.
First, we verify that $y \in \Lambda$.
Since $\lambda(x)_+ \in  \Re^d_{\downarrow} = \lambda(\Re^n)$, there exists 
some $z \in \Re^n$ such that $\lambda(z) = \lambda(x)_+$ holds.
Let $B \in \mathcal{G}_p$ be such that $z = B(\Delta \lambda(x)_+)$.
Since $A,B \in \mathcal{G}_p$, we have
\[
\lambda(y) = \lambda(\Delta(\lambda(x)_+)) = \lambda(B(\Delta \lambda(x)_+)) = \lambda(z) = \lambda(x)_+
\]
%
%
%
%
We conclude that $\lambda(y) = \lambda(x)_+$ and $y \in \Lambda$.

Finally, since $A \in \mathcal{G}_p$ and $\Delta$ is an isometry we have
\[
\norm{x-y}_p^2 = \norm{\Delta(\lambda(x) - \lambda(x)_+)}_p^2 = \norm{\lambda(x) - \lambda(x)_+}^2_2 = \sum _{i=1}^d \min(\lambda_i(x),0)^2 = \dist(x, \Lambda)^2.
\]
Therefore, $P_\Lambda(x) = y$.
\end{proof}

\subsection{Limitations and further discussion}
The results we proved so far on the distance function and the projection operator are restricted to isometric hyperbolic polynomials and the sharpest result requires the additional provisions that $\lambda(\Re^n) = \orR{d}$ and that $p$ be diagonalizable. 
Unfortunately there are at  least three issues.

The first is that it is not known 
how large the class of isometric hyperbolic polynomials is and
even hyperbolic polynomials associated to polyhedral cones may fail to be isometric. 
In a sense, this is not surprising because the same cone can be generated by different hyperbolic polynomials, e.g., $\Re^3_+$ also satisfies $\Re^3_+ = \Lambda(p,(1,1,1))$ for $p(x_1,x_2,x_3) = x_1^2x_2^2x_3^2$.
An example of non-isometric polynomial is given in \cite[Example~5.2]{BGLS01},
however it is less than ideal because there there are ``redundancies'' in the description of the underlying cone\footnote{Example~5.2 in 
\cite{BGLS01} corresponds to the restriction of the polynomial $p(x_1,x_2,x_3) = x_1x_2x_3$ to a certain two-dimensional space. Every pointed two-dimensional closed convex cone is isomorphic to $\Re^2_+$, which only requires a degree $2$ hyperbolic polynomial.
So, there is indeed a redundancy in the expression provided for Example~5.2 in 
	\cite{BGLS01}. }. 

The way to ensure that there are no redundancies is to restrict ourselves to \emph{minimal polynomials}, which 
we will now recall. 
For simplicity, assume that $\Lambda$ is a regular hyperbolicity cone.
Although $\Lambda$ can be generated by different hyperbolic polynomials, there exists a hyperbolic polynomial $p$ of minimal degree that generates $\Lambda$ with the property that $p$ divides any other hyperbolic polynomial 
$\hat p$ satisfying $\Lambda = \Lambda(\hat p,e)$, e.g., see \cite[Lemma~2.1]{HV07} for a more general result or 
see the discussion in \cite[Section~2.2]{GIL24}.
We will call such a $p$ a \emph{minimal degree polynomial for $\Lambda$}.
So the more interesting question is whether \emph{minimal} hyperbolic polynomials 
can fail to be isometric. Unfortunately, that is indeed possible.

\begin{proposition}[A non-isometric minimal polynomial associated to a polyhedral cone]\label{prop:non-isometric}
	Let $p:\mathbb{R}^3 \rightarrow \mathbb{R}$ be the polynomial defined as
	\begin{equation*}
	p(x) = (x_1+x_2+x_3)(x_1-x_2+x_3)(2x_1-x_2-x_3)(x_1+2x_2-x_3).
	\end{equation*}
	$p$ is a hyperbolic polynomial along $e = (0,0,1)$, and the hyperbolicity cone for $(p,e)$ is the polyhedral cone
	satisfying $\Lambda(p,e) = \{x \in \Re^3 \mid x_1+x_2+x_3 \geq 0,x_1-x_2+x_3 \geq 0,-2x_1+x_2+x_3 \geq 0,  -x_1-2x_2+x_3 \geq 0 \}$. 
	The polynomial $p$ is minimal for $\Lambda(p,e)$ but $p$ is not isometric.
	
\end{proposition}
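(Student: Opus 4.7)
The plan is to verify the four claims — hyperbolicity of $p$ along $e$, the cone description, minimality, and non-isometry — largely by explicit computation with the given linear factors, saving the real work for the last item. Since $p = \prod_{i=1}^4 \ell_i$ with linear forms $\ell_i$ satisfying $\ell_i(e) = \pm 1$ for every $i$, the polynomial $t \mapsto p(x-te)$ factors as $\prod_i (\ell_i(x) - t \ell_i(e))$ and therefore has four real roots $t = \ell_i(x)/\ell_i(e)$. So $p$ is hyperbolic along $e$, and the eigenvalues are exactly $m_i(x) \coloneqq \ell_i(x)/\ell_i(e)$; reading off $m_i(x) \geq 0$ reproduces the four inequalities in the statement.

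For minimality I would appeal to the standard fact that for a polyhedral cone the minimal hyperbolic polynomial is, up to a nonzero scalar, the product of the linear forms defining its facets. It suffices to check that all four inequalities $m_i(x) \geq 0$ are essential, which I would do by exhibiting the extreme rays $(-1,0,1)$, $(0,-1,1)$, $(-1,2,3)$, $(3,1,5)$ (obtained by intersecting pairs of bounding hyperplanes and verifying membership in the cone) and observing that a three-dimensional polyhedral cone with four extreme rays arranged cyclically has four distinct facets, each spanned by a neighbouring pair of rays.

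The non-isometry argument exploits that the four covectors $m_1,\dots,m_4 \in (\Re^3)^*$ must satisfy a nontrivial linear relation; a direct calculation gives $7m_1 - 9m_2 - 4m_3 + 6m_4 = 0$, so the image $V \coloneqq \{(m_1(x), \dots, m_4(x)) : x \in \Re^3\}$ is the three-dimensional hyperplane $\{u \in \Re^4 : 7u_1 - 9u_2 - 4u_3 + 6u_4 = 0\}$, and $\lambda(\Re^3)$ consists of those $u \in \orR{4}$ some permutation of which lies in $V$. If $p$ were isometric, Definition~\ref{def:isometric} applied to any $y, z \in \Re^3$ would produce an $x$ with $\lambda(x+y) = \lambda(x) + \lambda(y) = \lambda(z) + \lambda(y)$, and since $\lambda(x+y) \in \lambda(\Re^3)$ this would force $\lambda(y) + \lambda(z) \in \lambda(\Re^3)$. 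Taking $y = (-1,2,3)$ and $z = (0,-1,1)$ gives $\lambda(y) = (7,4,0,0)$, $\lambda(z) = (3,2,0,0)$, and sum $(10,6,0,0)$; running through the twelve permutations of $(10,6,0,0)$ and evaluating $7u_1 - 9u_2 - 4u_3 + 6u_4$ on each confirms that none lies in $V$, yielding the contradiction. The main obstacle is selecting $y$ and $z$ whose sorted-eigenvalue sum is genuinely outside $\lambda(\Re^3)$: arbitrary choices may land inside $V$ under some permutation, and the permutation check remains finite and tractable precisely because the candidate sum $(10,6,0,0)$ has two repeated zero entries.
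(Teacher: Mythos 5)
Your proof is correct and takes a genuinely different, arguably more elegant, route for the non-isometry claim. The paper's proof picks $z=(3,1,0)$ (with $\lambda(z)=(4,2,-5,-5)$) and $y=(-1,0,0)$, then runs a six-case analysis to show $\lambda^{-1}(\lambda(z))=\{z\}$, after which non-isometry follows because $\lambda(z+y)\neq\lambda(z)+\lambda(y)$. You instead observe that isometry forces $\lambda(\Re^3)$ to be closed under addition (a consequence the paper itself flags in passing, via convexity of the eigenvalue image), and you characterize $\lambda(\Re^3)$ explicitly through the linear relation $7m_1 - 9m_2 - 4m_3 + 6m_4 = 0$ among the normalized root covectors, so that $u\in\lambda(\Re^3)$ iff $u\in\orR{4}$ and some permutation of $u$ satisfies the relation. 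Checking the twelve permutations of $(10,6,0,0) = \lambda((-1,2,3)) + \lambda((0,-1,1))$ is a clean finite verification (I checked: the values are $16, 46, 106, -48, -114, -54, 2, -94, -4, 102, 6, 36$, all nonzero). This buys you a shorter and more structural argument, and it makes transparent why the obstruction exists — the covectors are linearly dependent in a way that breaks permutation-invariance of the sum. The trade-off is that it only establishes failure of a necessary condition rather than directly exhibiting a pair $(y,z)$ for which the definition fails, though of course that is logically sufficient.

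Two small remarks on the other parts. For minimality, you invoke "the standard fact" that a polyhedral cone's minimal hyperbolic polynomial is the product of its facet-defining linear forms; the paper instead argues directly that any hyperbolic $q$ dividing $p$ with $\Lambda(q,e)=\Lambda(p,e)$ must be a product of a subset of the four degree-one factors, and that omitting any one of them produces a strictly larger cone (exhibiting an explicit point gained in each case). Your facet count via extreme rays is correct — I verified that $(-1,0,1)$, $(0,-1,1)$, $(-1,2,3)$, $(3,1,5)$ lie in the cone on facet pairs $\{1,2\}$, $\{1,3\}$, $\{2,4\}$, $\{3,4\}$ respectively, and that the pairs $\{1,4\}$ and $\{2,3\}$ cut no ray of the cone, giving exactly four facets arranged cyclically — but you should either cite the minimal-polynomial fact (e.g.\ \cite[Lemma~2.1]{HV07}, which the paper references) or spell out the short divisibility argument, since the statement you appeal to is the very thing being verified in this special case.
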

\begin{proof}
	See Appendix~\ref{app:non_iso}.
\end{proof}
The second issue is that even if a polynomial is isometric, it is not necessarily the case $\lambda(\Re^d) = \orR{d}$ holds. 
Such an example has already being considered in \cite[Section~6]{BGLS01}, see the ``Singular values'' part and consider Example~6.1 in view of Theorem~5.4 in \cite{BGLS01}. 

To be fair, we have {not} shown that the formula in \eqref{eq:hyper_guess}
fails for non-isometric hyperbolic polynomials. In principle, 
it could still be the case that \eqref{eq:hyper_guess} holds but it requires another proof strategy. However, we have some numerical evidence that \eqref{eq:hyper_guess} does not hold for the 
(non-isometric) hyperbolic polynomial in Proposition~\ref{prop:non-isometric}. Indeed, for $p$ and $e$ as in Proposition~\ref{prop:non-isometric}, if we let 
$x_0\coloneqq (1,1,0)$ and we compute numerically
\[
f_1 \coloneqq \min _{x \in \Lambda(p,e)}\norm{x-x_0}_2,\,\,   f_2 \coloneqq \min _{x \in \Lambda(p,e)}\norm{x-x_0}_p,\,\, f_3\coloneqq \sqrt{\sum _{i=1}^4 \min(\lambda_i(x_0),0)^2}
\]
we obtain $f_1 \cong 1.2421$, $f_2 \cong 3.3923$, $f_3 \cong 3.1623$, which suggests that they are all distinct. 
We have that $f_1$ and $f_2$ are the distances from $x_0$ to $\Lambda(p,e)$ in the usual Euclidean norm and in the norm $\norm{\cdot}_p$, respectively.
A CVXPY \cite{CVXPY} code for these computations can be found in the following link. 

\url{https://github.com/bflourenco/dfw_projection}

%

The third and final issue is that even in the cases that the result in this section apply in full, they are only valid to the projection computed with respect to the norm $\norm{\cdot}_p$ induced by the hyperbolic polynomial in \eqref{eq:norm_poly}. 
Naturally, the norm in \eqref{eq:norm_poly} can be different from the usual Euclidean norm in $\Re^n$, which may limit the applicability of the results. This happens, for example, for the polynomial considered in Proposition~\ref{prop:non-isometric}.

In view of these limitations, we believe it is important to also consider numerical algorithms for computing projections onto hyperbolicity cones. 
This leads us to the next section.

\section{A FW algorithm for strongly convex optimization over regular cones}\label{sec:algorithm}
In this section we develop a numerical method that is able to handle
strongly convex optimization over any regular closed convex cone $\stdCone$, which includes, in particular, the case of a hyperbolicity cone corresponding to a complete hyperbolic polynomial.
More precisely, we aim to solve the following pair of primal-dual problems:
\begin{multicols}{2}
	\noindent\begin{align}
\label{prob:primal}\tag{P}
\min_{x \in \mathbb{R}^n} &\quad f(x) \\
\mathrm{s.t. } &\quad Tx+b \in \stdCone \notag
	\end{align}
	\noindent
	\begin{align}\label{prob:dual}\tag{D}
   \min_{y \in \mathbb{R}^m} &\quad (f^*\circ T^*)(y)+ \langle b,y\rangle\\
\mathrm{s.t. } &\quad y \in \stdCone^*, \notag 
	\end{align}	
\end{multicols}
\vspace*{-1\baselineskip} 
\noindent where $f:\mathbb{R}^n \rightarrow \mathbb{R}$ is a  closed proper $\mu$-strongly convex function, $f^*$ is its conjugate, $T:\mathbb{R}^n \rightarrow \mathbb{R}^m$ is a linear map, $T^*$ is its adjoint and $b\in\Re^m$ is a vector. We recall that a proper convex function is closed if and only if it is lower-semicontinuous \cite[pg.~52]{rockafellar}.
While our discussion will be general for the most part, we will also discuss the special case of hyperbolicity cones where applicable. 

Although the goal is to solve \eqref{prob:primal}, the basic idea described in this section is to apply a Frank-Wolfe method to a compactified version of  \eqref{prob:dual}, because this will lead to tractable subproblems. 
Detailed rationale and theoretical backing for that are given in Section~\ref{sec:alg_fw}. The proposed method and convergence analysis are discussed in Section~\ref{sec:convergence}. Practical considerations and an example involving quadratic optimization are discussed in Sections~\ref{sec:practical} and \ref{sec:example}, respectively.


Before we move on further, some remarks are in order. 
We suppose that  $\Re^m$ is equipped with some arbitrary inner product $\inProd{\cdot}{\cdot}$ for which the corresponding norm is given by $\norm{\cdot}$. A similar remark applies to $\Re^n$ and while $\Re^m$ and $\Re^n$ may have different inner products, for simplicity we will use the same symbols $\inProd{\cdot}{\cdot}$, $\norm{\cdot}$ to denote the inner product and the norm in both spaces. 
Furthermore, the conjugate $f^*$, the adjoint $T^*$ and the dual cone $\stdCone^*$ are, of course, computed with respect to the inner product in the spaces that they are defined.
That said, in contrast to Section~\ref{sec:projection}, one important point is that, even if $\stdCone$ is a hyperbolicity cone, we will \emph{not} require that $\norm{\cdot}$ be the norm induced by underlying  hyperbolic polynomial $p$.

As mentioned in the Section~\ref{sec:int}, \eqref{prob:primal} contains as a particular  case the projection problem, since we can take $b = 0$, let $T$ be the identity map and $f(x) \coloneqq \norm{x-x_0}^2$, where $x_0$ is fixed and $\norm{\cdot}$ is the norm induced by some inner product on $\Re^n$. 
However, \eqref{prob:primal} will also allow us to handle more general quadratic minimization problems, as we shall see in Section~\ref{sec:example}. 

The problem \eqref{prob:dual}, although written as a minimization problem, is actually equivalent to the Fenchel dual of \eqref{prob:primal} and for completeness we show below its derivation.
First, we observe that \eqref{prob:primal} can be expressed as follows using  indicator functions:
\begin{equation}
\label{prob:primal_indicator}
\min_{x \in \mathbb{R}^n} f(x) + \delta_{\stdCone-b}(Tx).
\end{equation}
Then, the Fenchel dual problem of \eqref{prob:primal_indicator} is 
\begin{equation}
\label{prob:dual_indicator}
\max_{y \in \mathbb{R}^m} -f^*(T^*y) - \delta_{\stdCone-b}^*(-y),
\end{equation}
e.g., see \cite[Corollary~31.2.1]{rockafellar}.
Here, for all $y\in\Re^m$,
\begin{align*}
\delta_{\stdCone-b}^*(y) &= \sup_{x\in\Re^m} \{\langle x,y\rangle - \delta_{\stdCone-b}(x) \} = \langle -b,y\rangle + \delta_{\stdCone^\circ}(y),
\end{align*}
where $\stdCone^\circ = -\stdCone^*$ is the polar of $\stdCone$.
Hence, $\delta_{\stdCone-b}^*(-y) =  \langle b,y\rangle + \delta_{\stdCone^*}(y)$. So, \eqref{prob:dual_indicator} is equivalent to \eqref{prob:dual}.

\paragraph{Optimality conditions for \eqref{prob:primal} and \eqref{prob:dual}}
We now briefly discuss the optimality conditions for \eqref{prob:primal} and \eqref{prob:dual}, which follow from classic convex duality theory. 
First, we show that there is no duality gap and both problems are attained under mild conditions. 
Here, we say that a problem is \emph{attained} if there exists a feasible solution whose objective function value is equal to the optimal value of the problem.

\begin{proposition}[No duality gap]\label{prop:no_gap}
Let $p^*$ and $d^*$ denote the optimal values of \eqref{prob:primal} and \eqref{prob:dual}. The following items hold.
\begin{enumerate}[$(i)$]
	\item $p^* + d^* = 0$.
	\item If \eqref{prob:primal} is feasible, then $p^*$ and $d^*$ are finite and $p^*$ is attained.
	\item If \eqref{prob:primal} satisfies Slater's condition (i.e., there exists $\bar{x}$ such that $T\bar{x} + b \in \interior \stdCone$), then $p^*$ and $d^*$ are both finite and attained.
\end{enumerate}
\end{proposition}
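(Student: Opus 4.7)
The plan is to invoke Fenchel--Rockafellar duality in two different ways, exploiting a basic asymmetry between \eqref{prob:primal} and \eqref{prob:dual}: because $f$ is $\mu$-strongly convex with $\dom f = \Re^n$, its conjugate $f^*$ is $(1/\mu)$-smooth and finite on all of $\Re^n$; and because $\stdCone$ is regular, so is $\stdCone^*$, which ensures $\interior \stdCone^* \neq \emptyset$. These two facts together make a Slater-type constraint qualification on the \eqref{prob:dual} side automatic, and this is what will drive item~$(i)$ without any extra assumption.

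For $(i)$, I apply Rockafellar's strong duality theorem (e.g., \cite[Theorem~31.1]{rockafellar}) to \eqref{prob:dual}, viewed as a primal problem $\inf_y \tilde f(y) + \tilde g(T^*y)$ with $\tilde f(y) \coloneqq \langle b,y\rangle + \delta_{\stdCone^*}(y)$ and $\tilde g \coloneqq f^*$. A direct calculation using the bipolar identity $\stdCone^{**}=\stdCone$ yields $\tilde f^*(w) = \sup_{y \in \stdCone^*}\langle w-b,y\rangle = \delta_{\stdCone}(b-w)$, and $\tilde g^* = f^{**} = f$ since $f$ is closed proper convex. The Fenchel--Rockafellar dual therefore simplifies to $\sup_x \{-\tilde f^*(-Tx) - f(x)\} = \sup_x \{-f(x) : Tx+b \in \stdCone\} = -p^*$. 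The required qualification is the existence of $y \in \reInt \dom \tilde f = \interior \stdCone^*$ with $T^*y \in \reInt \dom \tilde g = \Re^n$, which holds unconditionally by the two observations in the previous paragraph. Strong duality therefore gives $d^* = -p^*$, i.e., $p^* + d^* = 0$.

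For $(ii)$, feasibility of \eqref{prob:primal} immediately gives $p^* < +\infty$. Strong convexity together with $\dom f = \Re^n$ forces $f$ to be bounded below and coercive, so $p^* > -\infty$, and since the feasible set $\{x : Tx+b \in \stdCone\}$ is closed and convex while $f$ is continuous (being a finite-valued convex function on $\Re^n$) and coercive, the infimum is attained. By $(i)$, $d^* = -p^*$ is also finite.

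For $(iii)$, I apply Rockafellar's theorem in the opposite direction to \eqref{prob:primal}, written as $\inf_x f(x) + \delta_{\stdCone - b}(Tx)$. Here the qualification becomes the existence of $x \in \reInt \dom f = \Re^n$ with $Tx \in \reInt(\stdCone-b) = \interior \stdCone - b$, i.e., $Tx+b \in \interior \stdCone$, which is precisely Slater's condition. The theorem then yields attainment of the Fenchel--Rockafellar dual of \eqref{prob:primal}, whose value equals $-d^*$, so $d^*$ is attained. Combined with $(ii)$ (Slater implies feasibility), both $p^*$ and $d^*$ are finite and attained. The main obstacle is the conjugate bookkeeping in the proof of $(i)$, especially the sign conventions when computing $\tilde f^*$ and identifying the abstract Fenchel--Rockafellar dual of \eqref{prob:dual} with the concave program $\sup_x\{-f(x) : Tx+b \in \stdCone\}$.
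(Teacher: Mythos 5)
Your proof is correct and reaches the same conclusions, but it routes through the Fenchel machinery differently from the paper. For item~$(i)$, the paper applies Fenchel duality once to \eqref{prob:primal} written as $\inf_x f(x) + \delta_{\stdCone-b}(Tx)$, invoking the \emph{dual-side} constraint qualification (which is automatic because $\stdCone^*$ has nonempty interior and $\dom f^* = \Re^n$); that single application simultaneously yields strong duality \emph{and} attainment of $p^*$ when finite, which the paper then uses to dispose of items~$(i)$ and $(ii)$ together. You instead turn \eqref{prob:dual} into a primal $\inf_y \tilde f(y) + \tilde g(T^*y)$, compute its Fenchel dual explicitly by conjugation, and verify that it collapses back to $-p^*$ --- this buys you the zero duality gap with only the primal-side CQ clause of the theorem, at the cost of some conjugate bookkeeping the paper avoids. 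Your item~$(ii)$ is then genuinely more elementary: you get attainment of $p^*$ from coercivity of a strongly convex finite function plus closedness of the feasible set, with no appeal to duality at all, whereas the paper reads it off the attainment clause of the duality theorem. Item~$(iii)$ is essentially identical in both: Slater's condition is the primal-side CQ, which yields attainment of the Fenchel dual, i.e., of $d^*$. Both approaches are valid; the paper's is slicker in bundling $(i)$ and $(ii)$ into one application of the theorem, while yours separates the duality-gap claim from the attainment claim and makes the latter self-contained. (One small shared caveat: as written, both your argument and the paper's establish $p^* + d^* = 0$ only under feasibility of \eqref{prob:primal}, since otherwise one faces $+\infty + (-\infty)$; this is a minor imprecision in the statement itself, not a defect specific to your proof.)
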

\begin{proof}
Since $f$ is assumed to be strongly convex over $\Re^n$, the domain of $f^*$ is $\Re^n$, see \cite[Theorem~4.2.1]{HUL96_II}.
Because $\stdCone$ is pointed, $\stdCone^\circ$ has an interior point.
We note also that $\interior (\stdCone)^\circ = \interior \dom \,\delta_{\stdCone-b}^*$.
Thus any point in $y \in \interior \mathrm{dom} \:\delta_{\stdCone-b}^*$ is
such that 
\begin{equation}\label{eq:dual_cq}
T^*y \in \reInt(\mathrm{dom} \:f^*) = \Re^n.
\end{equation}
Under  \eqref{eq:dual_cq}, we can invoke an appropriate version of Fenchel's duality theorem (e.g., \cite[Corollary~31.2.1]{rockafellar}) to conclude that 
\eqref{prob:primal} and \eqref{prob:dual_indicator} have the same optimal value and $p^*$ is attained if finite. In particular if \eqref{prob:primal} is feasible, then, since \eqref{prob:dual_indicator} is feasible as well, $p^*$ is attained. Recalling that the optimal value of \eqref{prob:dual_indicator} is $-d^*$, we conclude that 
$p^* + d^* = 0$.
This proves items $(i)$ and $(ii)$. 

Furthermore, if \eqref{prob:primal} satisfies Slater condition,
the same Fenchel's duality theorem ensures that $d^*$ is attained, which proves item~$(iii)$.
%
%
%
%
\end{proof}
From Proposition~\ref{prop:no_gap}, there is no duality gap between \eqref{prob:primal} and \eqref{prob:dual} and, as long as Slater's condition is satisfied at \eqref{prob:primal}, both problems have optimal solutions $\OPT{x}$ and $\OPT{y}$, respectively.
Since $f$ is strongly convex, $\OPT{x}$ is unique.
In any case, the solutions are related by the formulae:
\begin{align}
T^*\OPT{y} \in \partial f(\OPT{x}) \label{eq:KKT_f}\\
T\OPT{x} \in \partial {\delta^*_{\stdCone-b}}(-\OPT{y}), \label{eq:KKT_g}
\end{align}
e.g., see \cite[pg.~333 and Theorem~31.3]{rockafellar}.
Since we assumed that $f$ is a closed proper convex function, \eqref{eq:KKT_f}
is equivalent to $\OPT{x} \in \partial f^*(T^*\OPT{y})$, e.g., \cite[Theorem~23.5]{rockafellar}. Moreover, $f^*$ is differentiable because of the
strong convexity of $f$. In the end,  \eqref{eq:KKT_f} is equivalent to
\begin{equation}
\label{eq:KKT_reformulated}
\begin{aligned}
\OPT{x} = \nabla f^*(T^*\OPT{y}).
\end{aligned}
\end{equation}
Similarly, \eqref{eq:KKT_g} is equivalent to $-\OPT{y} \in \partial {\delta_{\stdCone-b}}(T\OPT{x})$.
An important consequence of \eqref{eq:KKT_reformulated} is that the unique optimal solution of \eqref{prob:primal_indicator} can be derived from \emph{any} optimal solution to the dual problem \eqref{prob:dual}.

\

\subsection{Overcoming the challenges of constructing a Frank-Wolfe based method}\label{sec:alg_fw}
When applying a Frank-Wolfe method there are a few challenges a practitioner must handle in order to obtain an efficient algorithm and to ensure convergence. 
For example, Frank-Wolfe methods typically require that the feasible region of the problem is compact, which is not necessarily the case for neither \eqref{prob:primal} nor \eqref{prob:dual}. 
It is also desirable that the subproblem that appears in the method either has a closed form solution or can be solved efficiently. 
In this subsection, we discuss these issues one by one.
Here, we recall our standing assumption that $\stdCone$ is a regular closed convex cone.

\subsubsection*{Issue 1: Primal or dual?}
The first issue is to decide which side of the problem to solve: \eqref{prob:primal} or \eqref{prob:dual}. 
Overall, we decided to solve \eqref{prob:dual} because the FW subproblem in this case can be connected to the generalized  minimum eigenvalue problem over a regular cone, as we shall discuss shortly. Furthermore, an optimal solution to \eqref{prob:primal} can be obtained via \eqref{eq:KKT_reformulated}, provided that Slater's condition for \eqref{prob:primal} is satisfied as in Proposition~\ref{prop:no_gap}.
Having settled for \eqref{prob:dual}, we discuss in the sequel two outstanding issues.

\subsubsection*{Issue 2: Compactness of the feasible region}
The classical FW method requires the compactness of the feasible region, so it can not be applied directly to \eqref{prob:dual}. 
However, this can be fixed by adding a constraint that cuts a compact slice of the feasible region of \eqref{prob:dual} in such a way that at least one optimal solution is inside the slice. 
In order to do that, we suppose we are given $e \in \Re^n$ and a constant $c_D $ as in the following assumption.

\begin{assumption}
	\label{asm:optimal_solution_bound}
We assume that $e \in \interior \stdCone$ and $c_D > 0$ are such that there exists at least one optimal solution  $y_\mathrm{opt}$  to \eqref{prob:dual} satisfying
$\inProd{e}{y_\mathrm{opt}} \leq c_D$.
\end{assumption}

Under Assumption~\ref{asm:optimal_solution_bound} we can show that
\begin{equation}
  \label{prob:dual_compact}
  \begin{array}{ll}
    \displaystyle
    \min_{y \in \mathbb{R}^m} & (f^*\circ T^*)(y) + \langle b,y\rangle\\
    \mathrm{s.t. } & \langle e, y \rangle \leq c_D \\
    & y \in {\stdCone^*}
  \end{array}
\end{equation}
has a compact feasible region, and, by assumption, \eqref{prob:dual_compact}  has at least one  optimal solution of \eqref{prob:dual} among its optimal solutions.
\begin{proposition}
  \label{prop:equivalent_compact_problem}
  The non-empty convex set 
  \begin{equation}\label{eq:compact_set}
      \{ y \in \stdCone^* \mid \inProd{e}{y} \leq c_D \}
  \end{equation}
  is compact. Moreover, \eqref{eq:compact_set} contains at least one optimal solution of \eqref{prob:dual}
  under Assumption~\ref{asm:optimal_solution_bound}.
\end{proposition}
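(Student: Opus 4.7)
The plan is to verify three things: (i) non-emptiness and convexity of the set $S \coloneqq \{y \in \stdCone^* \mid \inProd{e}{y} \leq c_D\}$, (ii) its compactness, and (iii) that it intersects the optimal solution set of \eqref{prob:dual}. Items (i) and (iii) are essentially immediate; the real content is the boundedness assertion in (ii).

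Non-emptiness follows because $0 \in \stdCone^*$ and $\inProd{e}{0}=0 \leq c_D$ since $c_D>0$. Convexity follows because $S$ is the intersection of the convex cone $\stdCone^*$ with a closed half-space. Closedness of $S$ is immediate since $\stdCone^*$ is closed (dual of any set is closed) and the constraint $\inProd{e}{y}\leq c_D$ defines a closed half-space.

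The heart of the argument is boundedness, which I would prove by contradiction using the hypothesis $e \in \interior \stdCone$. Suppose there exists a sequence $\{y_k\} \subseteq S$ with $\|y_k\|\to\infty$. Normalize to $z_k \coloneqq y_k/\|y_k\|$, which lies on the unit sphere. Passing to a convergent subsequence, $z_k \to \bar z$ with $\|\bar z\|=1$, so $\bar z \neq 0$. By closedness of $\stdCone^*$ we have $\bar z \in \stdCone^*$. Taking the limit of $\inProd{e}{z_k} \leq c_D/\|y_k\|$ gives $\inProd{e}{\bar z}\leq 0$. But since $e \in \interior \stdCone$, a standard separation argument (or the characterization of $\interior \stdCone$ as $\{e : \inProd{e}{y}>0 \text{ for all nonzero } y \in \stdCone^*\}$, which follows from $\stdCone$ being a closed convex cone with $\stdCone = \stdCone^{**}$) forces $\inProd{e}{\bar z}>0$, a contradiction. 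Hence $S$ is bounded, and combined with closedness, compact.

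For (iii), Assumption~\ref{asm:optimal_solution_bound} guarantees the existence of some optimal $y_{\mathrm{opt}}$ of \eqref{prob:dual} with $\inProd{e}{y_{\mathrm{opt}}}\leq c_D$. Feasibility for \eqref{prob:dual} gives $y_{\mathrm{opt}} \in \stdCone^*$, so $y_{\mathrm{opt}} \in S$. The main obstacle is really only the boundedness step, and even there the only subtlety is invoking the correct duality-pairing characterization of $\interior \stdCone$; everything else reduces to topology on closed convex cones.
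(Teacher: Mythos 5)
Your proof is correct. The only genuine divergence from the paper is in the boundedness step: you normalize a divergent sequence $y_k/\|y_k\|$, pass to a convergent subsequence on the unit sphere, and derive a contradiction with the strict positivity of $\inProd{e}{\cdot}$ on $\stdCone^*\setminus\{0\}$; the paper instead observes that the set contains the origin, so by \cite[Theorem~8.3]{rockafellar} its recession cone is $\{y \in \stdCone^*\mid \inProd{e}{y}\leq 0\}$, and then argues that this recession cone is trivial using the same positivity fact, whence boundedness. Both routes hinge on the identical key ingredient --- that $e\in\interior\stdCone$ forces $\inProd{e}{y}>0$ for all nonzero $y\in\stdCone^*$ --- so the difference is one of packaging: the recession-cone argument is shorter and delegates the limiting argument to a cited theorem, while your subsequence argument is more elementary and self-contained, essentially re-deriving the recession-cone fact by hand. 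One small thing to make fully explicit: you need $z_k\in\stdCone^*$ before invoking closedness to get $\bar z\in\stdCone^*$, which holds because $\stdCone^*$ is a cone (closed under positive scaling), but you do not state this.
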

\begin{proof}
  First, since  $0$ is always contained in \eqref{eq:compact_set}, the set in  \eqref{eq:compact_set} is non-empty.
  It is also closed and convex because $\stdCone^*$ is a closed convex cone.
 Next,  we prove the boundedness of \eqref{eq:compact_set} by checking that the recession cone of \eqref{eq:compact_set} is trivial.
 Since the set in \eqref{eq:compact_set} contains zero, its recession cone coincides with the set of elements $y$ such that $\lambda y $ belong to  \eqref{eq:compact_set} for all $\lambda \geq 0$, e.g., see \cite[Theorem~8.3]{rockafellar}.
 All such  $y$ must then belong to $\stdCone^*$ and satisfy $\inProd{e}{y} \leq 0$. However, since $e$ is an interior point of $\stdCone$, we have $\inProd{e}{y} = 0$ which forces $y = 0$. 
%
Finally, \eqref{eq:compact_set} contains at least one optimal solution of \eqref{prob:dual} by Assumption~\ref{asm:optimal_solution_bound}.
\end{proof}
In view of Proposition~\ref{prop:equivalent_compact_problem}, at least one of the optimal solutions of \eqref{prob:dual} can be obtained by solving \eqref{prob:dual_compact}. We also note that since the objective functions in \eqref{prob:dual_compact} and \eqref{prob:dual} are the same, the set of optimal solutions of the former is included in the optimal solution set of the latter.
However, \eqref{prob:dual_compact} has a compact feasible region so it is amenable to the classical FW method.

Of course, for a given problem, the crux of the issue is whether we can easily obtain $e$ and $c_{D}$ as in Assumption~\ref{asm:optimal_solution_bound}. 
As we will show in Section~\ref{sec:example}, $c_D$ can be computed explicitly from the problem data in the case of quadratic optimization, so the assumption of having $c_D$ at hand will not be problematic for our purposes.
\subsubsection*{Issue 3: Solving the FW subproblem}
Our current state of affairs is as follows. Having decided to apply a FW method to the dual side of our problem of interest, we showed that it is enough to solve \eqref{prob:dual_compact}, which is a compact version of \eqref{prob:dual} containing at least one of its optimal solutions.
Next we show that this choice indeed leads to easy subproblems.
Applying Algorithm~\ref{alg:FW_method} to the problem \eqref{prob:dual_compact} leads to the following subproblem at each iteration:
\begin{equation}
  \label{eq:FW_subproblem_aux}
  \begin{array}{ll}
    \displaystyle
    \min_{s \in \mathbb{R}^m} & \langle \nabla(f^* \circ T^*)(y_k)+b, s \rangle \\
    \mathrm{s.t. } & \langle e, s \rangle \leq c_D \\
    & s \in {\stdCone^*},
  \end{array}
\end{equation}
where we use $y_k$ in place of $x_k$ since we are working from the dual side. In what follows, it will be helpful to define
\begin{equation}
	 x_k \coloneqq \nabla f^*(T^*y_k).
\end{equation}
With that, we have $\nabla (f^* \circ T^*)(y_k)+b = T\nabla f^*(T^*y_k)+b = Tx_k+b$. Also, we transform the inequality constraint into a equality constraint by using a slack variable $\alpha$. With that, we arrive at the following subproblem, which is equivalent to \eqref{eq:FW_subproblem_aux}.
\begin{equation}
\label{eq:FW_subproblem}
\begin{array}{ll}
\displaystyle
\min_{s \in \Re^m, \alpha \in \Re} & \langle Tx_k+b, s \rangle \\
\mathrm{s.t. } & \langle e, s \rangle + \alpha = c_D \\
& s \in {\stdCone^*}, \alpha \in \Re_+.
\end{array}
\end{equation}
The problem \eqref{eq:FW_subproblem} is a common conic linear program in primal format. The next goal is to show that an optimal solution of \eqref{eq:FW_subproblem} can be written explicitly in terms of the the corresponding generalized eigenvalue function. 
In order to do so, we consider the dual problem of \eqref{eq:FW_subproblem}.

\begin{equation}
\label{eq:FW_dual_subproblem}
\begin{array}{cl}
\displaystyle
\max_{t\in\mathbb{R},z\in\mathbb{R}^m}  &c_D t \\
\mathrm{s.t. } & z  = (Tx_k+b) - te \\
& z \in \stdCone, t \leq 0.
\end{array}
\end{equation}

The problem in \eqref{eq:FW_dual_subproblem} is closely related to the minimum eigenvalue problem in \eqref{eq:min_eig_def}. 
As hinted previously, this is why solving our problem of interest from the dual side makes sense: when doing so, we arrive at a subproblem whose optimal value can be obtained from a minimum eigenvalue computation. 
We are now positioned to show our main theorem for this subsection.

\begin{theorem}[Closed-form solution of the FW subproblem]\label{theo:FW_sub_opt}
	Consider the primal-dual pair of problems \eqref{eq:FW_subproblem} and \eqref{eq:FW_dual_subproblem}.
	Then, the following statements hold.
	\begin{enumerate}[$(i)$]
		\item Both \eqref{eq:FW_subproblem} and \eqref{eq:FW_dual_subproblem} satisfy Slater's condition.
		In particular, the optimal values of both problems coincide and are attained.
		\item The optimal solution of \eqref{eq:FW_dual_subproblem} is given by
		\begin{equation}\label{eq:FW_dual_subproblem_opt}\begin{array}{l}
		t_{\mathrm{opt}} = \min(0,\lambda_{\mathrm{min}}(Tx_k+b)) \\
		z_{\mathrm{opt}} = Tx_k+b - t_{\mathrm{opt}} e,
		\end{array} 
		\end{equation}
		where $\lambda_{\min}$ is the minimum eigenvalue function along the direction $e$, as in \eqref{eq:min_eig_def}.
		\item If $t_{\mathrm{opt}} = 0$, then $(0,c_D) \in \Re^m \times \Re$ is an optimal solution to \eqref{eq:FW_subproblem}. 
		Otherwise, if $t_{\mathrm{opt}} < 0$, the optimal solution set of \eqref{eq:FW_subproblem} with respect to the $s$ variable is \begin{equation}
		\label{eq:optimal_solution_set_subproblem}
		\{ s \in {\stdCone^*} \mid \langle e,s \rangle = c_D , \:\: \langle s,z_{\mathrm{opt}} \rangle = 0 \}
		= \{ s \in \cj{{z_{\mathrm{opt}} }} \mid \langle e,s \rangle = c_D \},
		\end{equation}	
		where $\cj{{z_{\mathrm{opt}} }}$ is the conjugate face of $\stdCone$ at $z_{\mathrm{opt}}$ as in \eqref{eq:cj_def}.
\end{enumerate}
	
\end{theorem}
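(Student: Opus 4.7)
My plan is to treat the three parts in order, using conic LP duality between \eqref{eq:FW_subproblem} and \eqref{eq:FW_dual_subproblem} together with the defining property of $\lambda_{\min}$ from \eqref{eq:min_eig_def}.

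For part~$(i)$, I would verify Slater's condition on each side directly from the fact that $e \in \interior \stdCone$ (so, in particular, $\stdCone^*$ is also regular and has non-empty interior). On the primal side \eqref{eq:FW_subproblem}, I pick any $\hat s \in \interior \stdCone^*$, rescale it so that $\langle e, \hat s\rangle < c_D$ (possible since $c_D > 0$ and the scaling preserves membership in the open cone), and then $(\hat s, c_D - \langle e,\hat s\rangle)$ is strictly feasible with $\hat s \in \interior \stdCone^*$ and $\alpha > 0$. On the dual side \eqref{eq:FW_dual_subproblem}, because $e \in \interior \stdCone$, taking $t$ sufficiently negative produces $z = (Tx_k + b) - te \in \interior \stdCone$ together with $t < 0$. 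Strong duality and attainment on both sides then follow from standard conic LP duality theory.

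For part~$(ii)$, observe that \eqref{eq:FW_dual_subproblem} is, up to the sign restriction $t \leq 0$, exactly the problem defining $\lambda_{\min}(Tx_k + b)$ in \eqref{eq:min_eig_def} relative to the cone $\stdCone$ and the direction $e$. The objective $c_D t$ is linear with positive coefficient, so the unconstrained optimum is $\lambda_{\min}(Tx_k + b)$ (finite by Proposition~\ref{prop:eig_p}$(i)$); intersecting with $t \leq 0$ truncates it to $t_{\mathrm{opt}} = \min(0, \lambda_{\min}(Tx_k + b))$. The formula for $z_{\mathrm{opt}}$ is then just the defining equality constraint, and $z_{\mathrm{opt}} \in \stdCone$ because $t_{\mathrm{opt}} \leq \lambda_{\min}(Tx_k + b)$.

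For part~$(iii)$, I would extract the complementary slackness conditions from the Lagrangian $L(s,\alpha,t) = \langle (Tx_k+b) - te, s\rangle - t\alpha + tc_D$: at an optimal primal-dual pair one has the conic complementarity $\langle s, z_{\mathrm{opt}}\rangle = 0$ and the bound complementarity $t_{\mathrm{opt}}\alpha = 0$. If $t_{\mathrm{opt}} = 0$ the dual optimal value is $0$ and so the primal optimum is $0$, which is certifiably attained by $(s,\alpha) = (0, c_D)$. If instead $t_{\mathrm{opt}} < 0$, then $\alpha = 0$ is forced, so any primal optimum satisfies $s \in \stdCone^*$, $\langle e, s\rangle = c_D$, and $\langle s, z_{\mathrm{opt}}\rangle = 0$; conversely any such $s$ is feasible and attains the dual value $c_D t_{\mathrm{opt}}$ via $\langle Tx_k + b, s\rangle = \langle z_{\mathrm{opt}} + t_{\mathrm{opt}} e, s\rangle = t_{\mathrm{opt}} c_D$, hence is optimal. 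The second equality in \eqref{eq:optimal_solution_set_subproblem} is just the definition $\cj{z_{\mathrm{opt}}} = \stdCone^* \cap \{z_{\mathrm{opt}}\}^\perp$ from \eqref{eq:cj_def}.

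The main obstacle is really just careful bookkeeping rather than depth: one must correctly derive \eqref{eq:FW_dual_subproblem} as the conic dual of \eqref{eq:FW_subproblem} (handling both the conic variable $s \in \stdCone^*$ and the scalar slack $\alpha \geq 0$ with its paired sign constraint $t \leq 0$) and then split cleanly on whether the sign constraint is active at the optimum.
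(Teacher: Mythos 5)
Your proof is correct and follows essentially the same route as the paper: Slater's condition on both sides of the conic LP pair, then complementary slackness to pin down the optimal $(s,\alpha)$. Your part~$(ii)$ is organized a bit more cleanly than the paper's two-case analysis — you note directly that $\{t : Tx_k+b-te \in \stdCone\}$ is the interval $(-\infty,\lambda_{\min}(Tx_k+b)]$ and truncate at $0$ — but this relies on the same facts (downward-closedness of that set and attainment of the supremum, via Proposition~\ref{prop:eig_p}) and reaches the identical conclusion.
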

\begin{proof}
\fbox{Item~$(i)$}
First we check that \eqref{eq:FW_subproblem} satisfies Slater's condition.
$\stdCone$ is a regular cone by assumption, so $\interior \stdCone^*$ is not empty. Let $s \in \interior \stdCone^*$.
If $\inProd{e}{s} = 0$, then since $e$ is an interior point of $\stdCone$, we have $s = 0$, which is impossible since $0 \in \interior \stdCone^*$ implies 
$\stdCone^* = \Re^m$.
Therefore, $s$ satisfies $\inProd{e}{s}> 0$. Let
\begin{equation}
\hat{s} \coloneqq \frac{c_D}{2\langle e, s \rangle}s.
\end{equation}
Then, $\hat{s}\in \interior \stdCone^*$ and $
\langle e, \hat{s}\rangle = \frac{c_D}{2}$
hold. Therefore, $(\hat{s}, {c_D}/{2})$ is a strictly feasible solution to \eqref{eq:FW_subproblem}.

Next, we check that \eqref{eq:FW_dual_subproblem} satisfies Slater's condition. Since $e \in \interior \stdCone$, there exists a small $u > 0$ such that $(Tx_k + b)u + e \in \interior \stdCone$. Therefore, for $t \coloneqq  -1/u$, we have 
$Tx_k + b - te  \in \interior \stdCone$ and $(Tx_k + b - te, t) $ is a strictly feasible solution.

\fbox{Item~$(ii)$}
We divide the proof in two cases.

\fbox{Case $(a)$: $Tx_k+b \in \stdCone$}
In this case, 
we have $\inProd{Tx_k+b}{s} \geq 0$ for every $s \in {\stdCone^*}$. Since $(0,c_D)$ is feasible for \eqref{eq:FW_subproblem}, $(0,c_D)$ is an optimal solution of \eqref{eq:FW_subproblem}. Because of item~$(i)$
the optimal value of \eqref{eq:FW_dual_subproblem} must be zero as well, so $t_{\mathrm{opt}} = 0$ which coincides with $\min(0,\lambda_{\mathrm{min}}(Tx_k+b))$.

\fbox{Case $(b)$: $Tx_k+b \notin \stdCone$} 
Let $t_{\mathrm{opt}}$ denote the optimal solution of \eqref{eq:FW_dual_subproblem}, which exists and is finite because of item~$(i)$. 
By definition of $\lambda_{\min}$ (see \eqref{eq:min_eig_def}), we have  $t_{\mathrm{opt}} \leq \lambda_{\min}(Tx_k + b)$, since the problem in \eqref{eq:FW_dual_subproblem} has one additional constraint in comparison to the problem in \eqref{eq:min_eig_def}. However, since 
$Tx_k + b \not \in \stdCone$, the inequality $\lambda_{\min}(Tx_k+b) < 0$ holds by item~$(ii)$ of Proposition~\ref{prop:eig_p}. Therefore, 
$t \coloneqq \lambda_{\min}(Tx_k+b)$ and $z \coloneqq Tx_k+b - t e $ is feasible for \eqref{eq:FW_dual_subproblem}, so 
$t_{\mathrm{opt}}  = \lambda_{\min}(Tx_k+b)$.
Therefore, in this case too, the formula in \eqref{eq:FW_dual_subproblem_opt} holds.


\fbox{Item~$(iii)$} If $t_{\mathrm{opt}} = \min(0,\lambda_{\mathrm{min}}(Tx_k+b)) = 0$, then $\lambda_{\mathrm{min}}(Tx_k+b) \geq 0$ which implies that $Tx_k + b  \in \stdCone$, by Proposition~\ref{prop:eig_p}.
In this case, we already verified in Case (a) of item~$(ii)$ that $(0,c_D)$ is an optimal solution to \eqref{eq:FW_subproblem}.
Suppose that $t_{\mathrm{opt}} < 0$.
Since both \eqref{eq:FW_subproblem} and \eqref{eq:FW_dual_subproblem} satisfy Slater's condition, the following conditions from classical conic linear programming duality theory are necessary and sufficient for optimality
\begin{align}
\inProd{z}{s} -t\alpha &= 0\label{eq:comp}\\
\langle e, s \rangle + \alpha &= c_D \notag\\ 
 (Tx_k+b) - te & = z \notag\\
z \in \stdCone, s \in \stdCone^*, t \leq 0, \alpha &\geq 0. \notag
\end{align}
Therefore, if $t_{\mathrm{opt}} < 0$, then  complementary slackness (i.e., \eqref{eq:comp}) and $\alpha \geq 0$ implies that the optimal $\alpha^*$ in \eqref{eq:FW_subproblem} is $0$.
In particular, the $s \in \stdCone^*$ that are optimal for \eqref{eq:FW_subproblem} are exactly the ones that satisfy $\inProd{e}{s} = c_D$ and $\inProd{s}{z_{\mathrm{opt}}} = 0$.
\end{proof}
So far, we have shown that we can obtain the optimal \emph{value} of \eqref{eq:FW_dual_subproblem} through a minimum eigenvalue computation. However, we still require an optimal \emph{solution} for the case where $t_{\mathrm{opt}} < 0$.
Fortunately, from item~$(iii)$ of Theorem~\ref{theo:FW_sub_opt}, we see that it is enough to find a nonzero $s\in \cj{{z_{\mathrm{opt}} }}$ and rescale $s$ so that 
$\langle e,s \rangle = c_D $ holds. This leads us to our final point in this subsection.

\paragraph{Conjugate vector computations}
In view of Theorem~\ref{theo:FW_sub_opt}, the final piece we need to complete our discussion is a method to find a nonzero vector in $\cj{{z_{\mathrm{opt}} }}$. Unfortunately, this is a task that is very specific to the cone at hand, so now we take a look at some particular cases. 

First, if $\stdCone = \Lambda(p,e)$ is an arbitrary hyperbolicity cone, 
it seems  nontrivial to obtain a formula for $\cj{z}$ given an arbitrary $z \in \Lambda(p,e)$.
That said, a \emph{specific}  nonzero conjugate vector can be obtained easily by using the following proposition. Here, we recall that $\mult(z)$ denotes the number of zero eigenvalues of $z$ and $p^{(i)}$ is the $i$-th directional derivative of $p$ along a fixed hyperbolic direction, see Section~\ref{sec:hyp_p}.
\begin{proposition}[Conjugate vectors in hyperbolicity cones]
  \label{prop:normal_vector}
Let $\Lambda = \Lambda(p,e) \subseteq \Re^m$ be a hyperbolicity cone.
Let $z \in \Lambda$ satisfy $\mathrm{mult}(z) \geq 1$ and define $r \coloneqq \mult(z)$. Then, 
\begin{equation*}
 \nabla p^{(r-1)}(z) \in \cj{z} \setminus \{0\}.
\end{equation*}
\end{proposition}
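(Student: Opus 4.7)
The plan is to verify the three required properties of $\nabla p^{(r-1)}(z)$ separately: orthogonality to $z$ via Euler's identity, membership in $\Lambda^*$ via a supporting hyperplane argument using the chain $\Lambda \subseteq \Lambda^{(r-1)}$, and non-vanishing via a directional derivative identity. The starting point is to translate the multiplicity assumption into analytic information about the sequence $p, p^{(1)}, p^{(2)}, \ldots$. Writing $q_z(t) \coloneqq p(z - te) = p(e)\prod_{i=1}^d (\lambda_i(z) - t)$, the hypothesis $\mult(z) = r$ means exactly $r$ of the $\lambda_i(z)$ vanish, so $t = 0$ is a root of $q_z$ of multiplicity exactly $r$. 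Since $\frac{d^j}{dt^j} p(z - te)\big|_{t=0} = (-1)^j p^{(j)}(z)$ by the chain rule and the definition of $p^{(j)} = D_e^j p$, this yields
\[
p^{(j)}(z) = 0 \text{ for } j = 0,1,\ldots,r-1, \qquad p^{(r)}(z) \neq 0.
\]

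From these two facts the orthogonality and non-vanishing claims are immediate. Since $p^{(r-1)}$ is homogeneous of degree $d - r + 1$, Euler's identity gives
\[
\langle \nabla p^{(r-1)}(z), z \rangle = (d - r + 1)\, p^{(r-1)}(z) = 0,
\]
and since $p^{(r)}$ is by definition the directional derivative $D_e p^{(r-1)}$, we have $\langle \nabla p^{(r-1)}(z), e\rangle = p^{(r)}(z) \neq 0$, so in particular $\nabla p^{(r-1)}(z) \neq 0$.

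The main step is dual cone membership. Assuming (without loss of generality, possibly after replacing $p$ with $-p$) that $p(e) > 0$, it follows that $p^{(r-1)}(e) > 0$, and then the factorization $p^{(r-1)}(x) = p^{(r-1)}(e) \prod_{i} \lambda^{(r-1)}_i(x)$ obtained by setting $t = 0$ in $p^{(r-1)}(x - te) = p^{(r-1)}(e)\prod_i(\lambda^{(r-1)}_i(x) - t)$ shows that $p^{(r-1)} \geq 0$ on $\Lambda^{(r-1)}$. Now fix an arbitrary $y \in \Lambda$. Using the inclusion $\Lambda \subseteq \Lambda^{(r-1)}$ together with the fact that $\Lambda^{(r-1)}$ is a convex cone containing $z$, we have $z + ty \in \Lambda^{(r-1)}$ for all $t \geq 0$, hence $t \mapsto p^{(r-1)}(z + ty)$ is nonnegative on $[0,\infty)$ and vanishes at $t=0$. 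Its right-derivative at $0$ is therefore $\geq 0$, giving $\langle \nabla p^{(r-1)}(z), y\rangle \geq 0$. As $y \in \Lambda$ was arbitrary, this proves $\nabla p^{(r-1)}(z) \in \Lambda^*$, and combining with the previous paragraph completes the proof. The only delicate point is the orientation argument for membership in $\Lambda^*$, which relies on the standard sign normalization $p(e) > 0$; the rest is bookkeeping with Euler's identity and the chain rule.
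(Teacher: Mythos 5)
Your proof is correct, and it takes a genuinely more elementary route than the paper's. The paper structures its argument as a base case ($r=1$) followed by a reduction: it invokes Theorem~\ref{thm:mult_derivative} repeatedly to conclude that $z$ has multiplicity exactly one with respect to $p^{(r-1)}$, and it cites Renegar's Lemma~7 for the non-vanishing of $\nabla p(z)$ at a smooth boundary point. You replace both of these external ingredients by a single direct observation: since $t=0$ is a root of $q_z(t) = p(z-te)$ of multiplicity exactly $r$, and $q_z^{(j)}(0) = (-1)^j p^{(j)}(z)$, one has $p^{(j)}(z)=0$ for $j<r$ and $p^{(r)}(z)\neq 0$. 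From this, orthogonality follows at once via Euler's identity for the homogeneous polynomial $p^{(r-1)}$, non-vanishing follows from $\langle \nabla p^{(r-1)}(z), e\rangle = p^{(r)}(z) \neq 0$ (a cleaner argument than citing Lemma~7 of Renegar), and dual-cone membership follows by the same supporting-hyperplane argument the paper uses in its base case, applied directly to $p^{(r-1)}$ on $\Lambda^{(r-1)} \supseteq \Lambda$. Both approaches ultimately rest on $p^{(r-1)}$ being sign-definite and vanishing at $z$; the paper's version is more modular and leverages the structural boundary theorem, while yours is self-contained, uniform in $r$, and avoids invoking any imported results beyond basic calculus and the hyperbolicity definitions.
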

\begin{proof}
First, we observe that $z \in \Lambda$ and $\mathrm{mult}(z) \geq 1$ implies that $z$ is in the boundary of $\Lambda$ which follows, for example, from 
item~(iii) of Proposition~\ref{prop:eig_p} and Proposition~\ref{prop:hyp_min_eig}, see also \cite[Section~3]{renegar2004hyperbolic}.

We first consider the case $r = 1$. We recall that for $y \in \Lambda$, we have $p(y) \geq 0$. In addition, $p(y) = 0$ holds if $y$ is in the boundary of $\Lambda$. In particular, we have $p(z+ty) \geq 0$ and $p(z+tz) = 0$ for every $y \in \Lambda$ and $t  \geq 0$. Then, taking the derivative with respect to $t$ at $t = 0$, we obtain
\[
\inProd{\nabla p(z)}{y} \geq 0, \forall y \in \Lambda \quad\text{ and }\quad \inProd{\nabla p(z)}{z} = 0.
\]
This shows that $\nabla p(z) \in \cj{z}$. 
However, since $\mult(z) = 1$, $\nabla p(z)$ is nonzero (e.g., \cite[Lemma~7]{renegar2004hyperbolic}). 
Therefore, $\nabla p^{(r-1)}(x) = \nabla p(x)$ is a non-zero conjugate vector.  

Next, we consider the case $r \geq 2$. 
We recall that \begin{equation*}
  \partial^i \Lambda= \{ x \in \Lambda \mid \mathrm{mult}(x) = i \} .
\end{equation*}
By definition, $x \in \partial^r \Lambda$. Using Theorem \ref{thm:mult_derivative} repeatedly,
\begin{equation*}
  \partial^r \Lambda = \partial^{(r-1)} \Lambda^{(1)} = \cdots = \partial^2 \Lambda^{(r-2)} = (\partial^1 \Lambda^{(r-1)}) \cap \Lambda^{(r-2)}
\end{equation*}
holds. Therefore, $x \in (\partial^1 \Lambda^{(r-1)}) \cap \Lambda^{(r-2)}$.
Letting $\bar \Lambda \coloneqq \Lambda^{(r-1)}$, the multiplicity of $0$ as an eigenvalue of $z$ with respect to $p^{(r-1)}$ is one, so we can apply the previous case to $\bar \Lambda$ and $p^{(r-1)}$ to conclude 
that $\nabla p^{(r-1)}(z)  \neq 0$ holds and $\nabla p^{(r-1)}(z)$ belongs to the conjugate face of $\Lambda^{(r-1)}$ at $z$. However, 
since $\Lambda \subseteq \Lambda^{(r-1)}$, we have 
${(\Lambda^{(r-1)})}^* \subseteq \Lambda^*$.
This implies that $\nabla p^{(r-1)}(z)$ belongs to the conjugate face of $\Lambda$ at $x$.
\end{proof}
We are now ready to complete our discussion on the optimal solutions of \eqref{eq:FW_subproblem} for the case of a hyperbolicity cone, which 
we note as a corollary.

\begin{corollary}\label{col:opt_sub_hyper}
Under the setting of Theorem~\ref{theo:FW_sub_opt}, suppose that 
$\stdCone = \Lambda(p,e)$ is a regular hyperbolicity cone. 
If $\OPT{t} < 0$ then
\begin{equation}
\label{eq:optimal_solution_subproblem}
\frac{c_D}{\langle e, \nabla p^{(\mathrm{mult}(z_{\mathrm{opt}})-1)}(z_{\mathrm{opt}}) \rangle}\nabla p^{(\mathrm{mult}(z_{\mathrm{opt}})-1)}(z_{\mathrm{opt}})
\end{equation}
is an optimal solution of \eqref{prob:dual_compact}.
\end{corollary}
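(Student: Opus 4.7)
The plan is to combine Theorem~\ref{theo:FW_sub_opt}~$(iii)$ with Proposition~\ref{prop:normal_vector} and a simple rescaling. Since $\OPT{t} < 0$ is assumed, item~$(iii)$ of Theorem~\ref{theo:FW_sub_opt} tells us that the set of optimal $s$-components for \eqref{eq:FW_subproblem} is exactly
\[
\{ s \in \cj{z_{\mathrm{opt}}} \mid \langle e,s \rangle = c_D \},
\]
so it suffices to exhibit a nonzero vector in $\cj{z_{\mathrm{opt}}}$ and then rescale it to satisfy $\langle e,s\rangle = c_D$.

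First I would verify that $\mathrm{mult}(z_{\mathrm{opt}}) \geq 1$ so that Proposition~\ref{prop:normal_vector} is applicable. By item~$(ii)$ of Theorem~\ref{theo:FW_sub_opt}, we have $\OPT{t} = \lambda_{\min}(Tx_k+b)$ and $z_{\mathrm{opt}} = Tx_k+b - \OPT{t}\,e$. Using the eigenvalue shift identity $\lambda(x - te) = \lambda(x) - t\mathbf{1}$ established inside the proof of Proposition~\ref{prop:hyp_min_eig}, we get $\lambda_{\min}(z_{\mathrm{opt}}) = 0$, so zero is an eigenvalue of $z_{\mathrm{opt}}$ and $\mathrm{mult}(z_{\mathrm{opt}}) \geq 1$ as required. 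Proposition~\ref{prop:normal_vector} then yields
\[
\nabla p^{(\mathrm{mult}(z_{\mathrm{opt}})-1)}(z_{\mathrm{opt}}) \in \cj{z_{\mathrm{opt}}} \setminus \{0\}.
\]

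Second, I would check that the rescaling factor is well defined, i.e.\ that the denominator is strictly positive. This follows because $e \in \interior \Lambda$ together with $\nabla p^{(\mathrm{mult}(z_{\mathrm{opt}})-1)}(z_{\mathrm{opt}}) \in \Lambda^* \setminus \{0\}$ forces $\langle e, \nabla p^{(\mathrm{mult}(z_{\mathrm{opt}})-1)}(z_{\mathrm{opt}})\rangle > 0$ (otherwise, pairing an interior direction with a nonzero dual element would yield zero, contradicting pointedness of $\Lambda^*$ via a standard separation argument). Multiplying $\nabla p^{(\mathrm{mult}(z_{\mathrm{opt}})-1)}(z_{\mathrm{opt}})$ by $c_D / \langle e, \nabla p^{(\mathrm{mult}(z_{\mathrm{opt}})-1)}(z_{\mathrm{opt}})\rangle$ produces an element still in $\cj{z_{\mathrm{opt}}}$ (since $\cj{z_{\mathrm{opt}}}$ is a cone) and satisfying $\langle e,s\rangle = c_D$, which by Theorem~\ref{theo:FW_sub_opt}~$(iii)$ is optimal for \eqref{eq:FW_subproblem}.

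The only delicate step is the positivity of $\langle e, \nabla p^{(\mathrm{mult}(z_{\mathrm{opt}})-1)}(z_{\mathrm{opt}})\rangle$; I expect that to be the main (but still mild) obstacle, and everything else is a direct assembly of previously stated results.
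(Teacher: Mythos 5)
Your proof matches the paper's intended argument: the corollary is indeed a direct assembly of Theorem~\ref{theo:FW_sub_opt}~$(iii)$, Proposition~\ref{prop:normal_vector}, and the rescaling by $c_D/\langle e, \cdot\rangle$, with the denominator nonzero because $e \in \interior\Lambda$ pairs strictly positively with any nonzero element of $\Lambda^*$. Your extra check that $\lambda_{\min}(z_{\mathrm{opt}}) = 0$ (so $\mathrm{mult}(z_{\mathrm{opt}}) \geq 1$ and Proposition~\ref{prop:normal_vector} applies) is a welcome detail the paper leaves implicit; note also that the reference in the corollary to \eqref{prob:dual_compact} is best read as the Frank--Wolfe linearized subproblem \eqref{eq:FW_subproblem}, which is exactly what your argument establishes.
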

We recall that \[
{\langle e, \nabla p^{(\mathrm{mult}(z_{\mathrm{opt}})-1)}(z_{\mathrm{opt}}) \rangle} \neq 0\]
holds in Corollary~\ref{col:opt_sub_hyper} because $e \in \interior \Lambda$ and 
$\nabla p^{(\mathrm{mult}(z_{\mathrm{opt}})-1)}(z_{\mathrm{opt}}) \neq 0$.

Before we move on, we mention other  useful cones $\stdCone$ for which $\cj{z}$ is completely known, given a particular $z \in \stdCone$. 
For symmetric cones (this includes the case of positive semidefinite matrices and second order cones), formulae are given in \cite[Theorem~2]{FB06} and \cite[Section~4.1.1]{L17}. For $p$-cones with $p \in (1,\infty)$, see \cite[Section~4.1]{LLP21}. 
Power cones and exponential cones are linearly isomorphic to their dual cones under the Euclidean inner product, so with some adjustments, the formulae 
discussed in \cite[Section~3.1]{LLLP22} and \cite[Section~4.1]{LLP22}, respectively, can also be used to determine conjugate faces for these two cones. 

\subsection{The proposed method and its convergence analysis} \label{sec:convergence}
Having solved the issues related to applying  Algorithm~\ref{alg:FW_method} to our problem of interest, we can now formally state our obtained method, 
which is described in Algorithm~\ref{alg:proposed_method}.
In Algorithm \ref{alg:proposed_method},
any step size rule which guarantees the convergence of the FW method can be used (e.g., rules discussed in Theorem~\ref{thm:convergence_FW}).
In what follows we will prove some convergence guarantees for Algorithm~\ref{alg:proposed_method}. Although the classical convergence theory of FW methods already provides some guarantees,  since we are applying the method to the \emph{dual} side of the problem, it is still necessary to show theoretical guarantees for the primal iterates $x_k$.

\begin{algorithm}[h]
	\caption{Dual Frank-Wolfe method for solving \eqref{prob:primal}}
	\label{alg:proposed_method}
	\begin{algorithmic}[1]
		\STATE Choose initial point $y_0 \in \stdCone^*$ satisfying $\inProd{e}{y_0} \leq c_D$.
		\FOR{$k = 0,1,\dots$}
		\STATE $x_k  \coloneqq \nabla f^*(T^*y_k)$
		\IF {$y_k$ or $x_k$ satisfies stopping criterion}
		\STATE Break
		\ENDIF
		
%
		\IF {$Tx_k+b \in \stdCone$}
		\STATE $s_k =  0 $
		\ELSE
		\STATE $z_k \coloneqq Tx_k+b - \lambda_{\mathrm{min}}(Tx_k+b) e $
		\STATE Let $\hat s_k \in \cj{z_k} \setminus \{0\}$ and let $s_k \coloneqq \frac{c_D}{\inProd{e}{\hat s_k}}\hat s_k$. {\color{gray}\textit{(For the case of hyperbolicity cones, see Proposition~\ref{prop:normal_vector})}}
		\ENDIF
		
		\STATE $d_k \coloneqq s_k - y_k$
		\STATE Choose $\alpha_k \in (0,1] $ by an appropriate rule
		\STATE $y_{k+1} \coloneqq y_k + \alpha_k d_k$
		\ENDFOR
	\end{algorithmic}
\end{algorithm}


%

First we recall and introduce some notation. We denote by $d^*$ the optimal value \eqref{prob:dual}. Recall that under Assumption~\ref{asm:optimal_solution_bound}, $d^*$ also coincides with the optimal value of \eqref{prob:dual_compact}. Also, let $\OPT{Y}$ denote the optimal solution set 
of \eqref{prob:dual_compact} and let $h$ denote the objective function of \eqref{prob:dual_compact}.

The analysis conducted in this subsection will, for the most part, be method agnostic. 
More precisely, suppose that $\{(x_k,y_k)\} \subseteq \Re^n\times \Re^n$ is any sequence such that the following assumption is satisfied.

\begin{assumptionAlpha}
	\label{asm:a} For every $k$, $y_k$ is feasible for \eqref{prob:dual_compact},  $x_k = \nabla f^*(T^*y_k)$ holds and $h(y_k) \to d^*$ holds.
\end{assumptionAlpha}

In particular, under Assumption~\ref{asm:optimal_solution_bound} and under an appropriate step size rule (see Theorem~\ref{thm:convergence_FW}), the iterates generated by Algorithm~\ref{alg:proposed_method} satisfy Assumption~\ref{asm:a}, although the feasibility of $y_k$ requires some comments. 
The initial iterate $y_0$ is in the feasible region of \eqref{prob:dual_compact} and all the subsequent iterates $y_{k+1}$ are obtained by taking a convex combination between $y_{k}$ and a direction $s_{k}$ that is feasible for \eqref{prob:dual_compact} as well. In particular, $y_k$ is always feasible for \eqref{prob:dual_compact} indeed.

We start with the following well-known lemma.
\begin{lemma}\label{lem:yk_opt}
Under Assumptions~\ref{asm:optimal_solution_bound} and \ref{asm:a}, the sequence $\{y_k\}$  satisfies
\begin{equation}\label{eq:yk_opt}
\lim_{k\to \infty} \dist(y_k,\OPT{Y})  = 0. \end{equation}	
\end{lemma}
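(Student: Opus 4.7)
The plan is to exploit three facts in combination: the compactness of the feasible region of \eqref{prob:dual_compact} (already established in Proposition~\ref{prop:equivalent_compact_problem}), the continuity of the objective $h$, and the hypothesis $h(y_k)\to d^*$ together with feasibility. A standard subsequence / contradiction argument then gives the claim.

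First I would record that $h = (f^*\circ T^*)(\cdot) + \inProd{b}{\cdot}$ is continuous on $\Re^m$. Indeed, since $f$ is $\mu$-strongly convex, its conjugate $f^*$ is $(1/\mu)$-smooth on $\Re^n$ (the fact cited right before Section~\ref{sec:hyp_p}), hence continuous, and composition with the linear map $T^*$ plus the linear term $\inProd{b}{\cdot}$ preserves continuity. I would also note that $Y_{\mathrm{opt}}$ is non-empty: it is the intersection of the compact feasible region of \eqref{prob:dual_compact} with the lower level set $\{y \mid h(y)\le d^*\}$ of the continuous function $h$, and by Assumption~\ref{asm:optimal_solution_bound} together with Proposition~\ref{prop:equivalent_compact_problem} \eqref{prob:dual_compact} attains its optimal value. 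In particular, $Y_{\mathrm{opt}}$ is a non-empty closed subset of the compact feasible set.

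Next I would argue by contradiction. Suppose $\dist(y_k,Y_{\mathrm{opt}})\not\to 0$. Then there exist $\varepsilon>0$ and a subsequence $\{y_{k_j}\}$ with $\dist(y_{k_j},Y_{\mathrm{opt}})\ge\varepsilon$ for every $j$. By Assumption~\ref{asm:a}, every $y_{k_j}$ lies in the feasible region of \eqref{prob:dual_compact}, which is compact by Proposition~\ref{prop:equivalent_compact_problem}. Hence, passing to a further subsequence if necessary, I may assume $y_{k_j}\to \bar y$, with $\bar y$ still feasible because the feasible region is closed. By continuity of $h$, $h(\bar y) = \lim_{j\to\infty} h(y_{k_j}) = d^*$, so $\bar y \in Y_{\mathrm{opt}}$, which forces $\dist(y_{k_j},Y_{\mathrm{opt}})\to 0$ along that subsequence, contradicting $\dist(y_{k_j},Y_{\mathrm{opt}})\ge \varepsilon$.

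I do not anticipate any real obstacle here; the result is a textbook consequence of compactness plus continuity. The only minor point worth flagging is to justify continuity of $h$ (via $(1/\mu)$-smoothness of $f^*$) and the non-emptiness of $Y_{\mathrm{opt}}$, both of which follow from assumptions and results already recorded earlier in the paper.
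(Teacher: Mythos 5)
Your proposal is correct and follows essentially the same compactness-plus-contradiction argument as the paper's proof in Appendix~\ref{app:proof_lemyopt}; the only difference is that you explicitly flag the continuity of $h$ and the non-emptiness of $\OPT{Y}$, which the paper leaves implicit.
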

\begin{proof} See Appendix~\ref{app:proof_lemyopt}.
\end{proof}
Although the sequence $\{y_k\}$  is not ensured to converge, limits of convergent subsequences must all be minimizers of \eqref{prob:dual_compact}, by Lemma~\ref{lem:yk_opt}. 
 With that, our first result is that the primal iterates $x_k$ indeed converge to the unique optimal solution.
\begin{theorem}[Convergence of primal iterates]\label{theo:xk}
Under Assumptions~\ref{asm:optimal_solution_bound} and \ref{asm:a} we have
  \begin{align*}
   \lim_{k\to \infty} x_k &= \OPT{x},
  \end{align*}
  where $\OPT{x}$ is the optimal solution of \eqref{prob:primal_indicator}.
\end{theorem}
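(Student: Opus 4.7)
The plan is to combine the dual-iterate convergence from Lemma~\ref{lem:yk_opt} with the Lipschitz continuity of $\nabla f^*$ (a consequence of the strong convexity of $f$) and the KKT characterization \eqref{eq:KKT_reformulated} of $x_{\mathrm{opt}}$.

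First I would set up the Lipschitz estimate. Because $f$ is $\mu$-strongly convex, the conjugate $f^*$ is $(1/\mu)$-smooth, so $\nabla f^*$ is Lipschitz continuous with constant $1/\mu$. Composing with the linear map $T^*$ and recalling that $\|T^*\|_{\mathrm{op}} = \|T\|_{\mathrm{op}}$, the map $\varphi\colon y \mapsto \nabla f^*(T^* y)$ is Lipschitz on $\Re^m$ with constant $\|T\|_{\mathrm{op}}/\mu$.

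Next I would identify the behaviour of $\varphi$ on $Y_{\mathrm{opt}}$. By Proposition~\ref{prop:no_gap}, Assumption~\ref{asm:optimal_solution_bound} (existence of a dual optimum with $\langle e,y_{\mathrm{opt}}\rangle \leq c_D$) forces no duality gap and attainment on the primal side, so the KKT relation \eqref{eq:KKT_reformulated} applies: for every $y^\star \in Y_{\mathrm{opt}}$ one has $\varphi(y^\star) = \nabla f^*(T^*y^\star) = x_{\mathrm{opt}}$. Thus $\varphi$ is constant on $Y_{\mathrm{opt}}$, equal to the unique primal minimizer.

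Then I would close the argument by projection. The set $Y_{\mathrm{opt}}$ is non-empty, closed and bounded (it is contained in the compact feasible region of \eqref{prob:dual_compact}), so for each $k$ there exists $\bar y_k \in Y_{\mathrm{opt}}$ with $\|y_k - \bar y_k\| = \dist(y_k, Y_{\mathrm{opt}})$. Applying the Lipschitz estimate and Lemma~\ref{lem:yk_opt},
\begin{equation*}
\|x_k - x_{\mathrm{opt}}\| = \|\varphi(y_k) - \varphi(\bar y_k)\| \leq \frac{\|T\|_{\mathrm{op}}}{\mu}\,\|y_k - \bar y_k\| = \frac{\|T\|_{\mathrm{op}}}{\mu}\,\dist(y_k, Y_{\mathrm{opt}}) \xrightarrow[k\to\infty]{} 0,
\end{equation*}
which is exactly the desired conclusion.

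I do not expect a real obstacle; the only points requiring care are (a) that the KKT identity \eqref{eq:KKT_reformulated} is legitimately available, which is handled by Proposition~\ref{prop:no_gap} together with Assumption~\ref{asm:optimal_solution_bound}, and (b) that $Y_{\mathrm{opt}}$ is closed so that the projection $\bar y_k$ exists, which follows from continuity of $h$ on the compact feasible region of \eqref{prob:dual_compact}. The heavy lifting was already done in Lemma~\ref{lem:yk_opt}, and the strong convexity of $f$ gives exactly the Lipschitz control needed to transport convergence from the dual iterates to the primal iterates.
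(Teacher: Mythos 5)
Your proof is correct and follows essentially the same route as the paper: use the KKT identity \eqref{eq:KKT_reformulated} to write $x_{\mathrm{opt}} = \nabla f^*(T^*y^\star)$ for any $y^\star \in Y_{\mathrm{opt}}$, exploit the $(1/\mu)$-Lipschitz continuity of $\nabla f^*$ coming from strong convexity, and invoke Lemma~\ref{lem:yk_opt} to drive $\dist(y_k,Y_{\mathrm{opt}})$ to zero. The only cosmetic difference is that you pick a nearest point $\bar y_k$ in $Y_{\mathrm{opt}}$ (justifying its existence via compactness), whereas the paper simply takes the infimum over $Y_{\mathrm{opt}}$, which sidesteps the need for that existence argument; both yield the same bound $\|x_k - x_{\mathrm{opt}}\| \leq (\|T\|_{\mathrm{op}}/\mu)\,\dist(y_k,Y_{\mathrm{opt}})$.
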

\begin{proof}
%
 From \eqref{eq:KKT_reformulated}, we have
  \begin{equation*}
    \forall \OPT{y} \in \OPT{Y}, \:\: \OPT{x} = \nabla f^*(T^* \OPT{y}).
  \end{equation*}
  Therefore,
  \begin{align*}
    \forall \OPT{y} \in \OPT{Y}, \:\: \norm{x_k-\OPT{x}} &= \norm{x_k- \nabla f^*(T^* \OPT{y})} \\
    &=\norm{\nabla f^*(T^* y_k)- \nabla f^*(T^* \OPT{y})}.
  \end{align*}
  Taking the infimum with respect to $\OPT{y}$ in $\OPT{Y}$, we obtain 
  \begin{equation*}
    \norm{x_k-\OPT{x}} = \inf_{\OPT{y} \in \OPT{Y}}\norm{\nabla f^*(T^* y_k) - \nabla f^*(T^* \OPT{y})}.
  \end{equation*}
  We recall that $\nabla f^*$ is $1/\mu$-Lipschitz continuous because of the $\mu$-strong convexity of $f$, see \cite[Theorem~4.2.1]{HUL96_II}. Thus,
  \begin{align*}
   \norm{x_k-\OPT{x}} &= \inf_{\OPT{y} \in \OPT{Y}}\norm{\nabla f^*(T^* y_k) - \nabla f^*(T^* \OPT{y})} \\
   &\leq  \inf_{\OPT{y} \in \OPT{Y}}\frac{1}{\mu}\norm{T^* y_k - T^* \OPT{y}} \\
   &\leq  \inf_{\OPT{y} \in \OPT{Y}}\frac{\|T^*\|_{\mathrm{op}}}{\mu} \norm{y_k - \OPT{y}} \\
   &= \frac{\|T^*\|_{\mathrm{op}}}{\mu} \dist(y_k,\OPT{Y}).
  \end{align*}
 However, under the stated assumptions, Lemma~\ref{lem:yk_opt} ensures that $\lim_{k\to \infty} \dist(y_k,\OPT{Y}) = 0$ holds, which leads to $ \lim_{k\to \infty}  \norm{x_k-\OPT{x}} = 0$.
\end{proof}

In spite of the potential lack of convergence of $\{y_k\}$,  the primal sequence $\{x_k\}$ \emph{is} ensured to converge by Theorem~\ref{theo:xk}.
Related to that, under a mild assumption on $T$, next we prove that the rate of convergence of $x_k$ to $\OPT{x}$ is no worse than the square-root of the rate of convergence of $h(y_k)$ to $d^*$.

\begin{theorem}[Rate of convergence]\label{thm:convergence_rate}
	Suppose that Assumptions~\ref{asm:optimal_solution_bound} and \ref{asm:a} hold,
  $f$ is $\mu$-strongly convex function and  that $T^*T $ is positive definite. Then, for every $k$ we have
  \begin{equation}
    \|x_k - \OPT{x}\| \leq \sqrt{\frac{2\|T\|_{\mathrm{op}}^2}{\mu\lambda_{\min}(T^*T)}}
    \sqrt{h(y_k) - d^*},
    \end{equation}
    where $\lambda_{\min}(T^*T)\:\:(>0)$ is the minimum eigenvalue of $T^*T$, $\OPT{x}$ is the optimal solution of
    \eqref{prob:primal_indicator} and $d^*$ is the optimal value of  \eqref{prob:dual}.
\end{theorem}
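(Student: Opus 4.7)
The plan is to exploit the smoothness of the dual objective $h$ and then transfer a bound on $\|T(x_k-\OPT{x})\|$ to a bound on $\|x_k-\OPT{x}\|$ using the positive definiteness of $T^*T$. Let me first fix any $\OPT{y} \in \OPT{Y}$ (guaranteed to exist under Assumption~\ref{asm:optimal_solution_bound}) and recall that, by \eqref{eq:KKT_reformulated}, $\OPT{x} = \nabla f^*(T^*\OPT{y})$, so that $\nabla h(\OPT{y}) = T\OPT{x} + b$ and $\nabla h(y_k) = Tx_k + b$, giving $\nabla h(y_k) - \nabla h(\OPT{y}) = T(x_k - \OPT{x})$.

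Next I would observe that $h$ is $L_h$-smooth with $L_h = \|T\|_{\mathrm{op}}^2/\mu$. This follows because $\nabla f^*$ is $(1/\mu)$-Lipschitz (strong convexity of $f$ together with \cite[Theorem~4.2.1]{HUL96_II}), hence
\[
\|\nabla h(y)-\nabla h(y')\| = \|T(\nabla f^*(T^*y)-\nabla f^*(T^*y'))\| \leq \tfrac{1}{\mu}\|T\|_{\mathrm{op}}\|T^*(y-y')\| \leq \tfrac{\|T\|_{\mathrm{op}}^2}{\mu}\|y-y'\|.
\]
Since $h$ is convex (as a composition of a convex function with a linear map, plus a linear term) and $L_h$-smooth, the Baillon--Haddad-type inequality applies:
\[
h(y_k) \geq h(\OPT{y}) + \langle \nabla h(\OPT{y}), y_k-\OPT{y}\rangle + \tfrac{1}{2L_h}\|\nabla h(y_k)-\nabla h(\OPT{y})\|^2.
\]

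Now I would discard the linear term using optimality. Since $\OPT{y}$ minimizes $h$ on the compact feasible set of \eqref{prob:dual_compact} and $y_k$ is feasible for that problem (part of Assumption~\ref{asm:a}), the first-order optimality condition gives $\langle \nabla h(\OPT{y}), y_k - \OPT{y}\rangle \geq 0$. Substituting $\nabla h(y_k)-\nabla h(\OPT{y}) = T(x_k-\OPT{x})$ and $h(\OPT{y}) = d^*$, we obtain
\[
h(y_k) - d^* \;\geq\; \tfrac{\mu}{2\|T\|_{\mathrm{op}}^2}\,\|T(x_k-\OPT{x})\|^2.
\]

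Finally, the positive definiteness of $T^*T$ bridges $T(x_k-\OPT{x})$ back to $x_k-\OPT{x}$: for all $u \in \Re^n$, $\|Tu\|^2 = \langle T^*Tu,u\rangle \geq \lambda_{\min}(T^*T)\|u\|^2$. Applying this with $u = x_k - \OPT{x}$ and rearranging yields
\[
\|x_k - \OPT{x}\|^2 \;\leq\; \frac{2\|T\|_{\mathrm{op}}^2}{\mu\,\lambda_{\min}(T^*T)}\bigl(h(y_k) - d^*\bigr),
\]
which is the stated bound after taking square roots. The only subtle point is justifying the optimality inequality $\langle \nabla h(\OPT{y}), y_k-\OPT{y}\rangle \geq 0$, which is routine but must use that $y_k$ lies in the \emph{same} feasible set of \eqref{prob:dual_compact} over which $\OPT{y}$ is optimal; this is exactly what Assumption~\ref{asm:a} guarantees.
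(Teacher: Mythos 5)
Your proof is correct and follows essentially the same route as the paper: establish that $h$ is $(\|T\|_{\mathrm{op}}^2/\mu)$-smooth, invoke the lower-bound inequality for convex $L$-smooth functions (what you call the Baillon--Haddad-type inequality and what the paper cites as \cite[Theorem~2.1.5]{Nes18}), discard the linear term via the first-order optimality condition for \eqref{prob:dual_compact}, and use the positive definiteness of $T^*T$ to pass from $\|T(x_k-\OPT{x})\|$ to $\|x_k-\OPT{x}\|$. The only difference is cosmetic: you cite the inequality informally rather than pointing to Nesterov's text, and you work out the identity $\nabla h(y_k)-\nabla h(\OPT{y})=T(x_k-\OPT{x})$ up front rather than mid-computation.
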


\begin{proof}
First, we check that $\nabla h$ is Lipschitz continuous with constant $\|T\|_{\mathrm{op}}^2/\mu$.
 Since $f$ is $\mu$-strongly convex,  $\nabla f^*$ is $1/\mu$-Lipschitz continuous (\cite[Theorem~4.2.1]{HUL96_II}). 
 Thus, for $y,y' \in \Re^n$ we have
  \begin{align*}
    \| \nabla h(y) - \nabla h(y') \| &= \| T\nabla f^*(T^*y) - T\nabla f^*(T^*y')\| \\
    & \leq \|T\|_{\mathrm{op}} \| \nabla f^*(T^*y) - \nabla f^*(T^*y')\| \\
    & \leq \|T\|_{\mathrm{op}} \frac{1}{\mu}\| T^*y - T^*y' \| \\
    & \leq \|T\|_{\mathrm{op}} \frac{1}{\mu} \|T^*\|_{\mathrm{op}}\| y - y' \| \\
    & = \frac{1}{\mu} \|T\|_{\mathrm{op}}^2 \| y - y' \|.
  \end{align*}
  From the convexity of $h$ and the  Lipschitz continuity of $\nabla h$ with constant $\|T\|_{\mathrm{op}}^2/\mu$, we obtain
  \begin{equation*}
  h(y_k) \geq h(\OPT{y}) + \langle y_k - \OPT{y},\nabla h(\OPT{y})\rangle 
  + \frac{1}{2}\frac{\mu}{\|T\|_{\mathrm{op}}^2}\|\nabla h(y_k) - \nabla h(\OPT{y}) \|^2 ,
  \end{equation*}
  see, e.g., \cite[Theorem~2.1.5, Equation (2.1.10)]{Nes18}. Recalling that  $\nabla h(y_k) = T\nabla f^*(T^* y_k) + b = T x_k+b$ and readjusting the inequality, we obtain
    \begin{align}
  \|Tx_k - T\OPT{x} \|^2 \leq  \frac{2\|T\|_{\mathrm{op}}^2}{\mu}\left(  h(y_k) - h(\OPT{y})
  + \langle y_k - \OPT{y},-\nabla h(\OPT{y})\rangle\right).
  \label{eq:norm_grad_dual_obj}
  \end{align}  
By Assumption~\ref{asm:a}, $y_k$ is always feasible for \eqref{prob:dual_compact}. From the first-order optimality conditions for the problem \eqref{prob:dual_compact} (e.g., \cite[Theorem~2.2.9]{Nes18}), we obtain the inequality
  \begin{equation}
    \label{eq:first_order_optimality}
    \langle y_k - \OPT{y},-\nabla h(\OPT{y})\rangle \leq 0.
  \end{equation}
  Also, from the assumption $T^*T \succ 0$,
  \begin{equation}
    \label{eq:lower_bound_T^*T}
      \|Tx_k - T\OPT{x} \|^2 \geq \lambda_{\min}(T^*T)  \norm{x_k - \OPT{x}}^2 .
  \end{equation}
  From \eqref{eq:norm_grad_dual_obj}, \eqref{eq:first_order_optimality}, and \eqref{eq:lower_bound_T^*T}, we obtain 
  \begin{align*}
     \lambda_{\min}(T^*T) \norm{x_k - \OPT{x}}^2 
      &\leq \frac{2\|T\|_{\mathrm{op}}^2}{\mu}\left(  h(y_k) - h(\OPT{y})\right), 
      \end{align*}
  which leads to
  \[
  \norm{x_k - \OPT{x}}
  \leq \sqrt{\frac{2\|T\|_{\mathrm{op}}^2}{\mu\lambda_{\min}(T^*T)}}\sqrt{h(y_k) - h(\OPT{y})}.
  \]
\end{proof}
As a corollary of Theorem~\ref{thm:convergence_rate} and of the fact that a convex function is locally Lipschitz continuous  over the relative interior of its  domain, we can also get a rate of convergence for the primal objective function.
\begin{corollary}[Convergence of the primal objective function]\label{col:primal_obj}
Suppose that Assumptions~\ref{asm:optimal_solution_bound} and \ref{asm:a} hold. Then, there exists a positive constant $L$ such that the output of Algorithm~\ref{alg:FW_method} after $k$ iterations satisfies
\[
f(x_k) - f(\OPT{x}) \leq L \norm{x_k - \OPT{x}}.
\]
In particular, under the assumptions of Theorem~\ref{thm:convergence_rate} we have 
\[
f(x_k) - f(\OPT{x}) \leq L\sqrt{\frac{2\|T\|_{\mathrm{op}}^2}{\mu\lambda_{\min}(T^*T)}}
\sqrt{h(y_k) - d^*}.
\]
\end{corollary}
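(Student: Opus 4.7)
The plan is to combine the local Lipschitz continuity of convex functions with the primal convergence result from Theorem~\ref{theo:xk} and the rate from Theorem~\ref{thm:convergence_rate}.

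First, I would note that since $f$ is $\mu$-strongly convex on $\Re^n$, its effective domain is all of $\Re^n$, so the relative interior of $\dom f$ is $\Re^n$ and $f$ is everywhere continuous. Under Assumptions~\ref{asm:optimal_solution_bound} and \ref{asm:a}, Theorem~\ref{theo:xk} gives $x_k \to \OPT{x}$. In particular, the sequence $\{x_k\}$ is bounded, so there exists a closed ball $B$ containing $\{x_k\}_{k \geq 0} \cup \{\OPT{x}\}$. Since $B$ is compact and contained in $\reInt \dom f = \Re^n$, the local Lipschitz property of convex functions on the relative interior of their domains (see, e.g., \cite[Theorem~10.4]{rockafellar}) yields a constant $L > 0$ such that $|f(u) - f(v)| \leq L \norm{u-v}$ for all $u,v \in B$.

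Applying this to $u = x_k$ and $v = \OPT{x}$ immediately gives
\[
f(x_k) - f(\OPT{x}) \leq L \norm{x_k - \OPT{x}},
\]
which proves the first claim.

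For the second claim, under the additional assumptions of Theorem~\ref{thm:convergence_rate} (namely $T^*T$ positive definite), the same theorem gives
\[
\norm{x_k - \OPT{x}} \leq \sqrt{\frac{2\|T\|_{\mathrm{op}}^2}{\mu\lambda_{\min}(T^*T)}}\sqrt{h(y_k) - d^*}.
\]
Chaining this with the first inequality yields the desired bound.

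The only mildly delicate point is making sure the Lipschitz constant $L$ is uniform in $k$; this is handled by observing that $\{x_k\}$ is bounded (because it converges by Theorem~\ref{theo:xk}), so a single compact set $B$ absorbs the whole sequence, and local Lipschitzness of $f$ on $B$ is enough. No step here is a serious obstacle; the result is essentially a direct combination of Theorems~\ref{theo:xk} and \ref{thm:convergence_rate} with the standard fact about continuity of finite convex functions.
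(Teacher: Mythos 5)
Your proof is correct and follows essentially the same route as the paper: establish that $\{x_k\}$ is bounded, invoke the local Lipschitz continuity of the finite convex function $f$ on bounded sets via \cite[Theorem~10.4]{rockafellar}, and then chain with Theorem~\ref{thm:convergence_rate}. The only minor variation is that you obtain boundedness of $\{x_k\}$ from its convergence (Theorem~\ref{theo:xk}), whereas the paper derives it more directly from the compactness of the dual feasible region (Proposition~\ref{prop:equivalent_compact_problem}) combined with the Lipschitz continuity of $\nabla f^*$; both are valid under the stated assumptions.
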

\begin{proof}
By Assumption~\ref{asm:a} and the compactness of the feasible region of \eqref{prob:dual_compact} (Proposition~\ref{prob:dual_compact}), the sequence of iterates $\{y_k\}$ is bounded. 
By definition $x_k =  \nabla f^*(T^* y_{k})$ holds and  $\nabla f^*$ is Lipschitz continuous, because of the strong convexity of $f$. In particular, $\{x_k\}$ is the image of the bounded set $\{y_k\}$ via a continuous map with closed domain, so $\{x_k\}$ is bounded as well\footnote{Of course, the
	 fact that the domain is closed is important, otherwise the  image of the interval $(0,1)$ by the function $x \mapsto 1/x$ would be a counter-example.} . 
Although  a given $x_k$ might fail to be feasible for \eqref{prob:primal}, it belongs to the domain of $f$, which is $\Re^n$ by assumption. 
Now, a convex function is Lipschitz continuous relative to any bounded set whose closure is contained in the relative interior of its domain (e.g., \cite[Theorem~10.4]{rockafellar}).
Then, since $\{x_k\}$ is bounded, there exists a constant $L > 0$ such that 
\[
f(x_k) - f(\OPT{x}) \leq L \norm{x_k - \OPT{x}}
\]
holds for every $k$, as we wanted to show. The remainder of the corollary follows directly from Theorem~\ref{thm:convergence_rate}.
\end{proof}
The summary of Theorems~\ref{theo:xk}, \ref{thm:convergence_rate} and Corollary~\ref{col:primal_obj} is the following. 
Under Assumptions~\ref{asm:optimal_solution_bound} and \ref{asm:a},
$x_k$ indeed converges to $\OPT{x}$. 
Furthermore, the convergence rates of $x_k$ to $\OPT{x}$ and of $f(x_k)$ to $f(\OPT{x})$ are no worse than the square root of the convergence rate of $h(y_k)$ to $d^*$, provided that $T^*T$ is positive definite.
These results are not specific to Frank-Wolfe methods, since they hold for any approach that generate sequences as in Assumption~\ref{asm:a} and arise as consequences of the relations between a strongly convex optimization problem and its dual.

In the particular case that $\{(x_k,y_k)\}$ are the iterates generated by Algorithm~\ref{alg:proposed_method}, if the step size is chosen as to ensure $h(y_k) - d^* = O(1/k)$ (see Theorem~\ref{thm:convergence_FW}), we have the following guarantees on the primal iterates and the primal objective function.
\begin{gather}
\lim_{k\to \infty } x_k = \OPT{x}, \label{eq:global_convergence}\\
\| x_k - \OPT{x}\| = O(1/\sqrt{k}), \label{eq:seqencial_convergence}\\
f(x_k)-f(x_{\text{opt}}) = O (1/\sqrt{k}) \label{eq:obj_vals_convergence},
\end{gather}
with the caveat that \eqref{eq:seqencial_convergence} and
\eqref{eq:obj_vals_convergence} 
require the extra assumption that $T^*T$ is positive definite (i.e., $T$ is injective).

Before we move on, we should remark that it was recently shown that the iterates of the Frank-Wolfe method may fail to converge in nontrivial settings, see \cite{BCP23}. We note that there is no contradiction with \eqref{eq:global_convergence}, since we proved convergence for the \emph{primal} iterate $x_k$. 
We recall that in our approach Frank-Wolfe is applied to the dual problem \eqref{prob:dual_compact} and, indeed, for the corresponding iterates $y_k$ we were not able to say anything more than what is expressed in Lemma~\ref{lem:yk_opt}.

\subsection{Practical considerations}\label{sec:practical}
Having discussed the theoretical properties of Algorithm~\ref{alg:proposed_method}, we now take a look at some implementation issues that may arise.

\paragraph{Choice of stopping criteria}
There is some level of flexibility regarding the choice of stopping criterion. Typically, a maximum iteration number can be set or, for example, the  Frank-Wolfe gap can be used to stop the algorithm when it becomes too small and the $x_k$ iterates are close to being feasible. In our own implementation used in Section~\ref{sec:numerical_experiment}, we allow multiple user-controlled stopping criteria.
\paragraph{Computation of the minimum eigenvalue function and conjugate vectors}
For an arbitrary regular cone $\stdCone$ (whether a hyperbolicity cone or not), 
the minimum eigenvalue function (see \eqref{eq:min_eig_def}) required in Algorithm~\ref{alg:proposed_method} depends on $\stdCone$ and  the chosen direction $e \in \interior \stdCone$. 
If a closed form expression is not readily available, as long as a procedure to decide membership in $\stdCone$ is available, a binary search approach can be used to find $\lambda_{\mathrm{\min}}(x)$ for a given $x$.
Obtaining conjugate vectors, however, is more challenging and depends on having a good understanding of the facial structure of $\stdCone$.

In the particular case where $\stdCone$ is a hyperbolicity cone, there is more structure one can exploit in order to efficiently compute $\lambda_{\min}$ and conjugate vectors, so let us take a look at this case.
Suppose that $\stdCone = \Lambda(p,e)$, where $p: \Re^n \to \Re$ has degree $d$. By Proposition~\ref{prop:hyp_min_eig}, $\lambda_{\min}$ is the smallest root of the univariate polynomial
$t \mapsto p(x-te)$. 
Or, equivalently,  $\lambda_{\min}$ is the \emph{negative} of the \emph{largest root} of the polynomial  $p_x : \Re \to \Re$ such that 
$p_x(t) \coloneqq p(x+te)$.
In order to compute the roots of $p_x$, we first need to determine its coefficients. In theory, this could be done via its Taylor expansion, since
\begin{equation*}
p_x(t) = p(x+te) =  \sum_{i=0}^d \frac{1}{i!} p^{(i)}(x) t^i,
\end{equation*}
where we recall that $p^{(i)} = D_e^ip$. In practice, however, a naive evaluation of $p^{(i)}$ may be computationally prohibitive. 
To address this issue, we follow Renegar's suggestion in \cite[Section~9]{renegar2004hyperbolic} to evaluate the terms $p^{(i)}(x)$ using the inverse Fast Fourier Transform as follows.
 \begin{theorem}\cite[Section~9]{renegar2004hyperbolic}
	\label{thm:higher_directional_derivative}
	Let $p: \Re^n \rightarrow \mathbb{R}$ be a $d$-degree hyperbolic polynomial whose directional vector
	is $e \in \Re^n$ and $\omega$ be a primitive $d$-th root of unity. 
	Then,
	\begin{equation*}
	\frac{1}{i!}p^{(i)}(x) = \frac{1}{d} \sum_{j=1}^d \omega^{-ij}p(x+\omega^j e) \quad (i=1,\cdots,d-1)
	\end{equation*}
	holds, and hence
	\begin{align*}
	\nabla p^{(i)}(x) = \frac{i!}{d} \sum_{j=1}^d  \omega^{-ij}\nabla p(x+\omega^j e) \quad (i=1,\cdots,d-1).
	\end{align*}
\end{theorem}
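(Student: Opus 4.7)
The idea is to recognize the formula as a discrete Fourier inversion applied to the Taylor expansion of $t \mapsto p(x+te)$ at the $d$-th roots of unity. Because $p$ has degree $d$, this univariate polynomial has degree at most $d$, so its Taylor expansion is finite:
\begin{equation}\label{eq:taylor_p}
p(x+te) = \sum_{k=0}^{d} \frac{1}{k!}\, p^{(k)}(x)\, t^k,
\end{equation}
which is the starting point of the argument.

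The plan is to evaluate \eqref{eq:taylor_p} at $t = \omega^j$ for $j = 1, \ldots, d$, multiply through by $\omega^{-ij}$, and sum over $j$. Swapping the order of summation yields
\begin{equation*}
\sum_{j=1}^d \omega^{-ij}\, p(x+\omega^j e) \;=\; \sum_{k=0}^{d} \frac{1}{k!}\, p^{(k)}(x) \sum_{j=1}^d \omega^{j(k-i)}.
\end{equation*}
The key fact is the orthogonality relation for $d$-th roots of unity: $\sum_{j=1}^d \omega^{jm}$ equals $d$ when $d \mid m$ and $0$ otherwise. For the range $i \in \{1,\ldots,d-1\}$ and $k \in \{0,\ldots,d\}$, the condition $d \mid (k-i)$ admits the unique solution $k = i$, since $|k-i| < d$ excludes every other multiple of $d$. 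Hence only the $k=i$ term survives, and the inner sum contributes a factor of $d$, giving
\begin{equation*}
\sum_{j=1}^d \omega^{-ij}\, p(x+\omega^j e) \;=\; \frac{d}{i!}\, p^{(i)}(x),
\end{equation*}
which rearranges to the first claimed identity.

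The gradient formula then follows by differentiating both sides with respect to $x$: since $p$ is smooth and the sum on the right is finite, we may interchange $\nabla_x$ with $\sum_j$, and the chain rule on $x \mapsto p(x+\omega^j e)$ simply gives $\nabla p(x+\omega^j e)$. Multiplying through by $i!$ yields the stated formula for $\nabla p^{(i)}(x)$. There is essentially no obstacle here beyond bookkeeping with the roots of unity; the only point one must be careful about is to restrict to $i \in \{1,\ldots,d-1\}$, so that the index $k=i$ is the unique index in $\{0,\ldots,d\}$ congruent to $i$ modulo $d$ (the endpoints $i=0$ and $i=d$ would each pick up two contributing terms from the Taylor expansion).
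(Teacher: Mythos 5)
Your proof is correct, and it is essentially the same argument the paper attributes to Renegar: the paper does not re-prove this statement but cites Section~9 of \cite{renegar2004hyperbolic}, where the identity is obtained by inverting the Vandermonde system built from the $d$-th roots of unity, which is exactly the discrete Fourier inversion you carry out. Your orthogonality bookkeeping (showing only $k=i$ survives for $1\le i\le d-1$) also independently confirms that the exponent must be $-ij$ rather than $ij$, in agreement with the typo correction the authors flag in the remark following the theorem.
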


\begin{remark}
At the end of Section~9 in \cite{renegar2004hyperbolic},
equations for 	$p^{(i)}(x)$ and $\nabla p^{(i)}(x) $ are given in which $i$ appears in place of $-i$ in the ``$\omega^{-ij}$'' term. 
We believe this is a typo, which can be verified by considering $p(x_1,x_2,x_3) = x_1x_2x_3$, $e = (1,1,1)$ and computing $p^{(1)}(x_1,x_2,x_3)$ which is $x_1x_2 + x_1x_3 + x_2x_3$. 
That said, the inverse of the Vandermonde matrix given in Section~9 is correct.
\end{remark}
Once the coefficients of $p_x(t)$ or $p_x(-t)$ are identified, we can numerically obtain its roots which allow us to compute the eigenvalues of $x$.
The computation of the roots of a polynomial is itself a nontrivial problem and there are a few choices on how to handle it. 
For example,  one simple approach is to form the companion matrix and compute its eigenvalues.
   
There is one extra outstanding issue regarding eigenvalue computations.
The terms $p(x+w^je)$ appearing in Theorem~\ref{thm:higher_directional_derivative} may be large even if $x$ itself is not very large\footnote{For example, for $p(x) \coloneqq x_1\cdots x_n$, $e = (1,\ldots, 1)$ and $x_0 \coloneqq (2,\ldots, 2)$, we have $p(x_0+e) = 3^n$.}. 
In order to ameliorate possible overflow issues, we may exploit the fact that $\lambda(\alpha x) = \alpha \lambda(x)$ holds for $\alpha  > 0$ and scale $x$ suitably before computing its eigenvalues. 
For example, we may divide $x$ by $\norm{x}$ if $\norm{x} > 1$ holds.

Finally, we note that Theorem~\ref{thm:higher_directional_derivative} also provides a formula for the computation of conjugate vectors as in Proposition~\ref{prop:normal_vector}.

\subsection{Minimizing a positive definite quadratic function}\label{sec:example}
In the previous subsections we discussed Algorithm~\ref{alg:proposed_method} and 
its properties. 
In this subsection, we discuss how Assumption~\ref{asm:optimal_solution_bound} is satisfied for the problem of minimizing a positive definite quadratic function under conic constraints.
More precisely, we will show that $c_D$ can be explicitly obtained from the problem data and that a dual optimal solution to \eqref{prob:dual} must 
satisfy $\inProd{e}{\OPT{y}} \leq c_D$ for some $e \in \reInt \stdCone$ as described below.
We consider the following problem
\begin{equation}\label{ex:quadratic}
  \begin{array}{ll}
    \displaystyle
    \min_{x\in\mathbb{R}^n} &f(x) = \frac{1}{2}\langle x,Qx \rangle + \langle c, x\rangle \\
    \mathrm{s.t. } & Tx+b \in \stdCone,
  \end{array}
\end{equation}
where $Q $ is a symmetric positive definite matrix (i.e., $Q \succ 0$), $\stdCone \subset \Re^m$ is a regular cone and the inner product is the usual Euclidean one. Additionally we assume that there exists $\hat{e}$ such that $T\hat{e} = e \in \reInt \stdCone$.
We can construct a feasible solution of \eqref{ex:quadratic} from $\hat{e}$ as follows. Since $T\hat{e} \in \reInt{\stdCone}$,
there exists $\epsilon > 0$ such that $T\hat{e}+\epsilon b \in \stdCone$. For this $\epsilon$, $T(\hat{e}/\epsilon)+ b \in \stdCone$,
so $\hat{e}/\epsilon$ is a feasible solution to \eqref{ex:quadratic}.

The conjugate function of $f$ is 
\begin{equation*}
f^*(x) = \frac{1}{2}\langle x-c,Q^{-1}(x-c) \rangle,
\end{equation*}
and hence,
\begin{equation} \label{eq:grad_conj_quad}
  \nabla f^*(x) = Q^{-1}(x-c).
\end{equation}

Next, we will show that
\begin{equation*}
  c_D \coloneqq \norm{\hat{e}}\sqrt\frac{\left( 2f(\hat{e}/\epsilon) + \inProd{c}{Q^{-1}c} \right)}{\lambda_{\mathrm{min}}(Q^{-1})}
\end{equation*}
satisfies Assumption \ref{asm:optimal_solution_bound}.
Let $\OPT{x}$ and $\OPT{y}$ be optimal solutions of \eqref{ex:quadratic} and the dual problem of \eqref{ex:quadratic}, respectively.
Since $\hat{e}/\epsilon$ is a feasible solution of \eqref{ex:quadratic}, 
\begin{align*}
  f(\hat{e}/\epsilon) &\geq f(\OPT{x}) \\
  &= \frac{1}{2}\inProd{\OPT{x}}{Q\OPT{x}} + \inProd{c}{\OPT{x}}.
\end{align*}
From $\OPT{x} = \nabla f^* (T^*\OPT{y})$ (see \eqref{eq:KKT_reformulated}), we have
\begin{align*}
  \frac{1}{2}\inProd{\OPT{x}}{Q\OPT{x}} + \inProd{c}{\OPT{x}}
  &= \frac{1}{2}\inProd{\nabla f^*(T^*\OPT{y})}{Q\nabla f^*(T^*\OPT{y})} + \inProd{c}{\nabla f^*(T^*\OPT{y})}\\
  &= \frac{1}{2}\inProd{Q^{-1}(T^*\OPT{y}-c)}{T^*\OPT{y}-c} + \inProd{c}{Q^{-1}(T^*\OPT{y}-c)}\\
  &= \frac{1}{2}\inProd{T^*\OPT{y}}{Q^{-1}T^*\OPT{y}}- \frac{1}{2}\inProd{c}{Q^{-1}c}.
\end{align*}
Therefore,
\begin{align*}
  &\inProd{T^*\OPT{y}}{Q^{-1}T^*\OPT{y}} \leq 2f(\hat{e}/\epsilon) + \inProd{c}{Q^{-1}c}
\end{align*}
which implies
\[ 
\norm{T^*\OPT{y}}^2 \leq \frac{\left( 2f(\hat{e}/\epsilon) + \inProd{c}{Q^{-1}c} \right)}{\lambda_{\mathrm{min}}(Q^{-1})},
\]
where $\lambda_{\mathrm{min}}(Q^{-1}) \:\:(>0)$ is the minimum eigenvalue of $Q$ as a matrix. From this inequality, $c_D$ is derived as follows.
\begin{align*}
  \inProd{e}{\OPT{y}} &= \inProd{T\hat{e}}{\OPT{y}} \\
  &= \inProd{\hat{e}}{T^*\OPT{y}} \\
  &\leq \norm{\hat{e}}\norm{T^*\OPT{y}}\\
  &\leq \norm{\hat{e}}\sqrt\frac{\left( 2f(\hat{e}/\epsilon) + \inProd{c}{Q^{-1}c} \right)}{\lambda_{\mathrm{min}}(Q^{-1})}.
\end{align*}
We note that \eqref{ex:quadratic} contains the particular case of the 
projection problem. That is, if we wish to project an arbitrary $x_0 \in \Re^m$ onto $\stdCone$, 
we may take  $c \coloneqq  -x_0$, let $T$ be the identity map and $Q$ be the identity matrix in \eqref{ex:quadratic}. In this case, it is enough to let 
$e = \hat e$ be any element in interior of $\stdCone$ and let $\epsilon \coloneqq 1$. With that, $c_D$ simplifies to 
\begin{equation}\label{eq:cd_projection}
  c_D = \norm{e}\norm{e-x_0}.
\end{equation}

\section{Numerical experiments}\label{sec:numerical_experiment}
In order to test our ideas, we wrote a MATLAB implementation of Algorithm~\ref{alg:proposed_method} and conducted two sets of experiments.
\begin{itemize}
	\item In Section~\ref{sec:exp_rel} we describe our experiments and implementation details on the problem of projecting a point onto a family of hyperbolicity cones given by elementary symmetric polynomials.
	\item In Section~\ref{sec:experiment_proj_p} we consider the problem of projecting a point onto $p$-cones, for different values of $p$.
\end{itemize}

Naturally, numerical experiments typically involve some sort of comparison.
However, there are  few methods and solvers that can handle directly the problems we discuss in this paper. 
Regarding the problems in Section~\ref{sec:exp_rel}, the most direct competitor is the accelerated gradient method (AGM) developed by Renegar in \cite{renegar2019accelerated}, which can handle conic linear programs over hyperbolicity cones. 
The DDS solver, which implements an interior point method, is  capable of handling hyperbolicity cone constraints and $p$-cone constraints. 
Mosek \cite{MC2020}, a commercial interior point method solver, can solve the problems in Section~\ref{sec:experiment_proj_p} but does not handle hyperbolicity cone constraints.
With this in mind, our goal in this section is to answer the following questions.

\begin{enumerate}[$(1)$]
	\item Is our dual Frank-Wolfe method competitive against Renegar's AGM in the case of hyperbolicity cones?
	\item Is our method ``competitive'' against IPMs?
\end{enumerate}
As we will see, the answer for question $(1)$ is a relatively straightforward ``yes'' as we found that our proposed method significantly outperforms Renegar's AGM although it should be stressed that Renegar's AGM is applicable to a more general class of problems.

The second question is more delicate. 
We wrote ``competitive'' (in quotation marks) because IPMs and first-order methods have different design goals. Generally speaking, first-order methods have a small cost per iteration. 
They struggle to get accurate solutions but they may be a good choice if the goal is to obtain solutions with low-to-medium accuracy fast. 
Conversely, IPMs seem to excel at getting accurate solutions, but often have more numerically expensive iterations.

Taking heed of this difference, both in Sections~\ref{sec:exp_rel} and \ref{sec:experiment_proj_p}, we designed experiments to check how long does it take for our method to obtain solutions that are ``somewhat close'' to the solutions obtained by IPMs.
This is consistent with the idea that the computation of a projection is often used as a subroutine in another method, so there are cases where getting a less accurate solution fast is more desirable.
Taking this into consideration, the answer to question (2)~is a qualified yes, as we will see in the results. Roughly speaking, our approach consistently obtains
solutions that are within $1\%$ to $5\%$ of the solutions obtained by IPMs in a fraction of the time.

All files can be found in the following link.

\url{https://github.com/bflourenco/dfw_projection}

All experiments were done in a PC with a Intel Xeon W-2145 CPU, 128GB RAM and Windows 10 Pro. The code was implemented in Matlab 2024b.
We used DDS version 2.0 and Mosek version 10 in our experiments.
For convenience,  terms and abbreviations used in the following discussion are summarized in Table~\ref{table:abbr}.

\begin{table}
	\centering
	\begin{tabular}{l|p{10cm}}
		 & Meaning\\ \hline \hline
		 Abs.~time (sec.) & Absolute time in seconds. \\
		AGM & Accelerated Gradient Method  \cite{renegar2019accelerated}.\\
		AGM EleSym & Accelerated Gradient Method \cite{renegar2019accelerated} using a divide-and-conquer approach to evaluate elementary symmetric polynomials and their gradients. \\ \hline
		DDS & Domain-Driven Solver \cite{karimi2019domain}. \\\hline
		Error & In the tables, this describes the target relative error with respect to the reference solver, which is DDS in Section~\ref{sec:exp_rel} and Mosek in Section~\ref{sec:experiment_proj_p}. See also explanation in Section~\ref{sec:comparison} and the discussion around \eqref{eq:crit}. \\ \hline
		FW & Algorithm~\ref{alg:proposed_method}.\\
		FW EleSym & Algorithm~\ref{alg:proposed_method} using a divide-and-conquer approach to evaluate elementary symmetric polynomials and their gradients.\\\hline
		Iterations & Average number of iterations to reach error target with respect to the reference solver. \\ \hline
		Rel.~Time & Average relative time to reach the error target with respect to the reference solver. Values close to $0$ are better. See explanation in Section~\ref{sec:comparison}. 
		\\ \hline
		S(\%) & Success rate, see explanation in Section~\ref{sec:comparison}.\\		\hline
		$\norm{x_k - x_{\text{DDS}}}_\infty$ & Average of the distance to the solution obtained by DDS. The distance is computed using the infinity norm. \\ 
		$\norm{x_k - x_{\text{Mos}}}_\infty$ & Average of the distance to the solution obtained by Mosek. The distance is computed using the infinity norm. \\ 
	\end{tabular}\caption{Abbreviations and terms used in 
	this section and in Tables~\ref{table:hyper_high_deg}-\ref{table:pcone_low}.
}\label{table:abbr}
\end{table}

\subsection{Projection onto derivative relaxations}\label{sec:exp_rel}
In this subsection our paper comes full circle and we address again the problem of 
projecting a point onto a hyperbolicity cone, this time from a numerical point of view.
We focus on hyperbolicity cones for which there are no (known)
closed form expressions in terms of the underlying eigenvalues.
Perhaps the simplest cones of this type correspond to 
the derivative relaxations of the nonnegative orthant. 
We remark that, more generally, derivative relaxations are often used to test ideas in the theory of hyperbolic polynomials and have been extensively studied, e.g., \cite{Zin08,Sa13,Bra14,SP15,Sa18,Kum21}.

In particular, the $k$-th derivative relaxation of $\Re^n_+$ (see Section~\ref{sec:hyp_p}), denoted by $\Re_+^{n,(k)}$ satisfies
\[
\Re_+^{n,(k)} = \Lambda(\sigma_{n,{n-k}},e),
\]
where $e\coloneqq 	 (1,\ldots,1)$ and $\sigma_{n,k}$ is the $k$-th elementary symmetric polynomial in $n$ variables which 
is given by 
\[
\sigma _{n,k}(x) \coloneqq \sum_{1\leq i_1<\cdots<i_k \leq n} x_{i_1}\cdots x_{i_k}.
\]
For more details on elementary symmetric polynomials and its applications to the study of hyperbolicity cones see, e.g., \cite[Section~5]{renegar2004hyperbolic} and \cite[Section~2]{BGLS01}.

In this section, our target problem is 
\begin{align}
\label{prob:proj_original}
\min_{x\in\Re^n} &\:\: \frac{1}{2}\| x-c \|^2 \\
\mathrm{s.t. } & \:\: x \in \Re_+^{n,(k)} \notag
\end{align}
where $\Re_+^{n,(k)}$ is the $k$-th derivative cone of $\mathbb{R}^n_+$ along  $e = \bm{1}_n$ and $c$ is the vector 
we wish to project onto $\Re_+^{n,(k)}$.
Since the objective function of \eqref{prob:proj_original} is a positive definite quadratic function, \eqref{prob:proj_original} is implementable as discussed in Section~\ref{sec:example}.
The derivative relaxations $\Re^{n,(k)}_+$ for $0 < k < n-3$, $n \geq 4$ are non-polyhedral and there are no known formulae for their orthogonal projections.

\paragraph{Implementation remarks on Algorithm~\ref{alg:proposed_method}}
We implemented Algorithm~\ref{alg:proposed_method} fairly straightforwardly following the discussion in Section~\ref{sec:algorithm}. 
In particular, the constant term in \eqref{prob:proj_original} is removed and we consider the equivalent problem
\begin{align}
\label{prob:proj_original_const}
\min_{x\in\Re^n} &\:\: \frac{1}{2}\norm{x}^2 - \inProd{c}{x} \\
\mathrm{s.t. } & \:\: x \in \Re_+^{n,(k)}. \notag
\end{align}
The problem \eqref{prob:proj_original_const} is regarded as the primal problem \eqref{prob:primal} and the constant $c_D$ is computed as described in \eqref{eq:cd_projection} with $e = \hat e = (1,\ldots, 1)$, $x_0 = c$.

The code for our implementation are in  the files \texttt{FW\_HP\_exp.m} and \texttt{FW\_HP.m}. 
The former is the one we actually use in the experiments and it returns all the iterates generated by the method and other useful experimental information. 
It is, however, quite memory intensive, so we also provide the file \texttt{FW\_HP.m} which only returns the best solution obtained during the course of the algorithm.
For users that wish a quick way to compute a projection onto a given  hyperbolicity cone, we also provide the file \texttt{poly\_proj.m} that is a wrapper around \texttt{FW\_HP.m} specialized for projection computations, see examples in \texttt{poly\_proj\_examples.m}.

We also implemented special functions to handle elementary symmetric polynomials and their gradients. 
Even for small $n$, the description of $\sigma _{n,k}$ 
can be quite large. 
For example, for $n = 20$, $\sigma _{10}$ is a sum of $184756$ monomials. Rather than store $\sigma_{n,k}$ as a matrix, we use an approach based on a simple divide-and-conquer algorithm to evaluate $\sigma_{n,k}$ and its gradients directly. 
The corresponding files are \texttt{eleSym.m} and \texttt{grad\_eleSym.m}.
In the numerical experiments we compare both the naive approach (i.e., storing the polynomials directly) and the implicit approach tailored for elementary symmetric polynomials.


\paragraph{Experimental and implementation remarks on Renegar's AGM and DDS}
In order to make use of Renegar's AGM and DDS we  considered the following equivalent formulations of \eqref{prob:proj_original} using an additional second-order cone constraint.
\begin{equation}\label{eq:hypcone2cone}
\eqref{prob:proj_original} \iff
\left\lbrace\begin{array}{ll}\displaystyle
\min_{x,y} & y\\
\mathrm{s.t. } &  y \geq \|x - c \| \\
& x \in \Re_+^{n,(k)} 
\end{array}\right. \iff
\left\lbrace\begin{array}{ll}\displaystyle
\min_{x,y,z} & y\\
\mathrm{s.t. } &  y \geq \|z \| \\
& z = x -c \\
& x \in \Re_+^{n,(k)} 
\end{array}\right. 
\end{equation}

Given a hyperbolicity cone and an underlying hyperbolic polynomial,
Renegar's AGM also requires the computation of  $p^{(i)}(x)$, $\nabla p^{(i)}(x)$ $(i=1,\dots,d)$ and the hyperbolic eigenvalues. In our implementation we took similar precautions as the ones discussed in Section~\ref{sec:practical}.
Additionally, Renegar's algorithm requires the computation of the following expression which corresponds to the gradient of a smoothed version of the maximum eigenvalue function:
\begin{equation}
  \label{eq:f_mu_original}
  \nabla f_\mu(x) = \frac{1}{\sum_j m_j \exp(\lambda_j(x)/\mu)}\sum_j\frac{m_j \exp(\lambda_j(x)/\mu)}{p^{(m_j)}(x-\lambda_j(x)e)}\nabla p^{(m_j-1)}(x-\lambda_j(x)e),
\end{equation}
where $\{\lambda_j(x)\}$ is the set of distinct eigenvalues of $x$ and $m_j$ is the multiplicity of $\lambda_j(x)$ and
$\mu \:(> 0)$ is a parameter determining the accuracy of the algorithm,  see \cite[Proposition 3.3]{renegar2019accelerated}. 
The smaller $\mu$ is, the smaller the error is guaranteed to be.
However, if $\mu$ is too small, there may be numerical issues if \eqref{eq:f_mu_original} is evaluated naively. 
To address this problem, we use the idea in \cite[Section 5.2]{nesterov2005smooth} in order to reformulate  \eqref{eq:f_mu_original} as \eqref{eq:f_mu_reformulated}.
\begin{equation}
  \label{eq:f_mu_reformulated}
  \nabla f_\mu(x) = \frac{1}{\sum_j m_j \exp(\lambda_j(x)-\lambda_{\max}/\mu)}\sum_j\frac{m_j \exp(\lambda_j(x)-\lambda_{\max}/\mu)}{p^{(m_j)}(x-\lambda_j(x)e)}\nabla p^{(m_j-1)}(x-\lambda_j(x)e),
\end{equation}
where $\lambda_{\max} = \max_j \{\lambda_j(x)\}$.

As in the case of our proposed method, we also adjusted the implementation of Renegar's AGM to make it possible to exploit the structure of elementary symmetric polynomials. 
Finally, we remark that Renegar's main algorithm (``MainAlgo'' in \cite{renegar2019accelerated}) prescribes that two accelerated gradient sub-methods run in parallel and, then, if at given point a certain condition is met \emph{for the iterates of the first sub-method}, \emph{both} sub-methods are stopped, a certain outer update is conducted and the sub-methods are then restarted.
Here, in order to simplify the implementation, instead of running the sub-methods in parallel, we perform one iteration of each sub-method and check if the condition for the outer update is satisfied. During the discussion of the results of the numerical experiments we will revisit this issue.

\subsubsection{A comparison between Renegar's AGM, Algorithm~\ref{alg:proposed_method} and DDS}\label{sec:comparison}
In this series of experiments we proceed as follows. 
We fix the values of $n$ and $k$ in \eqref{prob:proj_original} and then we 
generate $30$ normally distributed points in $\Re^n$. These are the $c$'s we would like to project onto $\Re_+^{n,(k)}$. 
For each generated point $c$ we check if the minimum eigenvalue of $c$ with respect to $\Re_+^{n,(k)}$ and $e = (1,\ldots, 1)$ is greater than $-10^{-4}$. If this happens, then $c$ is deemed to be too close to the cone, so we discard it and generate a new point. 

Once the $30$ points are generated, we solve the problem \eqref{prob:proj_original} with DDS, with our proposed method 
and with an implementation of Renegar's AGM. 
For Algorithm~\ref{alg:proposed_method} and Renegar's AGM we also considered variants that use code specialized to elementary symmetric polynomials. 
So, in total, each of the $30$ instances is solved through $5$ different methods, which will be, henceforth denoted by ``DDS'', ``FW'', ``FW EleSym'', ``AGM'' and ``AGM EleSym''.
Here, we recall that FW and AGM correspond to Algorithm~\ref{alg:proposed_method} and Renegar's accelerated gradient method, respectively. 
``EleSym'' indicates the usage of special methods to handle elementary symmetric polynomials as discussed previously.

We consider the objective function value obtained by DDS as the baseline to which we compare the performance of the other algorithms. 
The results are described in Tables~\ref{table:hyper_high_deg} and \ref{table:hyper_low_deg}. We now explain the meaning of the data.
For example, consider the first line in Table~\ref{table:hyper_hig_deg_10_1}, so that the ``Error'' column indicates ``$10\%$''.
Roughly speaking, for this line, we checked how much time does each one of the 4 tested methods need to get a solution that has a value that is within $10\%$ 
of the function valued obtained by DDS.

More formally, for each instance $i$, denote by $f_{\text{DDS}}^i$ and $t_{\text{DDS}}^i$  the objective function value obtained by DDS and the corresponding running time, respectively. 
Analogously, denote by $f_{\text{FW}}^{i,j}$ the function value obtained by Algorithm~\ref{alg:proposed_method}  for the $i$-th instance at the $j$-th iteration. 
Denote by  $t_{\text{FW}}^{i,j}$ the time elapsed after the $j$-th iteration. 
For each instance $k$ and a given error tolerance $E$  (e.g., $10\%$), we checked the amount of time that the FW method
required to reach an iteration $j$ for which 
\begin{equation}\label{eq:crit}
f_{\text{FW}}^{i,j} \leq f_{\text{DDS}}^i \times \left(\frac{100+E}{100}\right)
\end{equation}
holds and the minimal eigenvalue of corresponding primal iterate (the $x_k$ in Algorithm~\ref{alg:proposed_method}) is at least $-10^{-8}$ (i.e., $x_k$ is sufficiently close to being feasible). 
 Then, we record the ratio 
$\frac{t_{\text{FW}}^{i,j}}{t_{\text{DDS}}^i}$ and in the column ``Rel.~time'' we register the average of these ratios together with their standard deviation.
This average is what we henceforth call the \emph{mean relative time}.
In the ``S'' (for \emph{Success}) column, we indicate the percentage of instances for which the method was able to find a solution that satisfies  \eqref{eq:crit} within the feasibility requirements. 
The mean relative time is computed using successful instances only.

For the other methods FW EleSym, AGM and AGM EleSym we proceed similarly.
Although each algorithm uses a different equivalent formulation for the problem \eqref{prob:proj_original}, objective function value comparisons are always done with respect to the objective function of \eqref{prob:proj_original}.
For each instance, for all methods except DDS, we 
set the maximum running time to be equal to the time spent by DDS. 
The rationale is that it does not make sense to run a first order method longer than the time required by an IPM for the same problem. 

For example, for the first line in Table~\ref{table:hyper_hig_deg_10_1} (which corresponds to $n = 10$, $k =1$), the entry 
``$0.25 \pm 0.08$'' under ``FW'' means that, on average,  our proposed method was able to find a solution whose objective value is within $10\%$ of the value found by DDS using $0.25\%$ of the time that DDS needed to find $f_{\text{DDS}}^i$.
The corresponding standard deviation was $0.08$. It also succeeds for all $30$ instances, i.e., for each one of the instances there was at least one iteration that satisfied \eqref{eq:crit} with $E=10$.
The data in the other columns ``FW EleSym'', ``AGM'' and ``AGM EleSym'' have analogous meaning.

As we go down Table~\ref{table:hyper_hig_deg_10_1}, the mean relative time increases and success rate decreases. 
As the error $E$ decreases, it becomes harder to approach the values obtained by DDS within the allowed time budget.
 Still,  we believe it is notable that for $E = 1\%$, FW is able to get solutions whose values are within $1\%$ of the value obtained by DDS using less than $1\%$ of DDS's running time. 
 For the FW EleSym code, which is the variant optimized for elementary symmetric polynomials, we were able to get even more mileage with $100\%$ success for $E = 0.5\%$ and mean relative time of less than $2\%$.
 
 Overall, our impression is that a bottleneck in the 4 methods is the computation of minimum eigenvalues and, in the case of our proposed method, the computation of conjugate vectors too. 
 Both  are heavily influenced by the degree of the underlying hyperbolic polynomial. 
 Indeed, 
 the results for FW and FW EleSym for $n = 10$ and $k = 2$ (Table~\ref{table:hyper_hig_deg_10_2}) seem slightly better than the ones for 
 $n = 10$ and $k = 1$ (Table~\ref{table:hyper_hig_deg_10_1}) in the sense that the success rates were higher.
 
 In contrast for $n = 20$ and $k \in \{1,2\}$, we have polynomials of degrees 19 and 18 respectively. In those cases, the performance of FW, AGM and AGM EleSym plummet and these three methods seem to struggle to even get low accuracy solutions. 
 However, FW EleSym is still competitive and is able to get high success rates up to $E = 0.1$ with reasonable mean relative times. 
 
 In Table~\ref{table:hyper_low_deg}, we have the results for $(n,k) \in \{(30,27),(40,37),(50,47)\}$. In all those cases, the degree of the hyperbolic polynomial is just three.  Both FW and FW EleSym had particularly strong performances, which leads further credence to the idea that the degree is an important factor.
 In the case of FW EleSym, we were able to consistently get within $0.05\%$ or less of the objective value obtained by DDS with just a small fraction of the required running time. 
 For example, for $n = 50, k = 47$ (Table~\ref{table:hyper_low_deg_50_47}), on average, we needed no more than $0.04\%$ of the running time of DDS in order to get within 
 $0.005\%$ of the objective value.
 In this case, $\sigma_{n,n-k}$ has $19600$ monomials, so using routines specialized to elementary symmetric polynomials leads to a boost in performance. 
 We believe that is why in this set of experiments, FW EleSym was better than FW. Similarly, AGM EleSym had a superior performance when compared to the pure AGM.

 In Tables~\ref{table:hyper_high_deg} and \ref{table:hyper_low_deg}, we configured DDS to run with the default stopping criterion tolerance of $10^{-8}$. 
 This means that DDS  is actively looking for relatively high-accuracy solutions.
 One may then reasonably wonder what would happen if we configure DDS to run with a lower accuracy.
 To address that, we considered the same exact experiment but with the DDS stopping tolerance set to $10^{-3}$.
 
 When DDS runs with a lower tolerance there are two opposing effects that appear. 
 On one hand, the ratio of the running times tend to increase, since the denominator (i.e., the DDS running time) decreases as DDS stops earlier.  
 On the other hand,  since the solutions obtained by the DDS are less accurate, intuition would suggest that it would be easier to approach the solutions obtained by DDS using a first-order method. 
 This would mean  that the numerator of the mean relative times would get smaller.
 The former effect should lead to ``worse'' results and the latter effect should lead to ``better'' (i.e., higher success rates and/or decreased mean relative times) results.

 The results are described in Tables~\ref{table:hyper_high_deg_low_acc} and \ref{table:hyper_low_deg_low_acc}. 
 Overall, the results were largely similar to the ones in Tables~\ref{table:hyper_high_deg} and \ref{table:hyper_low_deg} and seem to allow for similar conclusions. 
 A notable difference is that, indeed, for some choices of $n,k$ the success rate of FW is higher than in the case where DDS is run with high accuracy. For example, for $n = 10$ and $k = 1$ (Table~\ref{table:hyper_high_deg_low_acc_10_1}), we can see that the success rate of FW stays above $60\%$ throughout the 30 instances, although, naturally, the mean relative times increase accordingly.  
 For $n = 20, k = 1$, FW EleSym was able to get more than $80\%$ success rate up until $E  = 0.001\%$ with reasonable mean relative times, see Table~\ref{table:hyper_high_deg_low_acc_20_1}. 
 In contrast, in the high-accuracy setting (Table~\ref{table:hyper_hig_deg_20_1}),  $E = 0.05\%$ seems to be the best we could obtain with success rate above $80\%$.
 In the setting of Table~\ref{table:hyper_high_deg_low_acc}, the fact that the solution obtained by DDS are easier to reach seems to be the stronger factor here.
 
In the case where the polynomials are of smaller degree (Table~\ref{table:hyper_low_deg_low_acc}), the fact that DDS stops faster seems to be the preponderant effect, as the success rates for FW and FW EleSym are no longer $100\%$  even at $E = 1\%$. 
Nevertheless, they still stay above $90\%$ up to $E = 0.5\%$ with quite reasonable mean relative times. Overall, for FW EleSym we still need less than $0.3\%$ of the running time of DDS in order to find solutions whose values are within $0.5\%$ of the objective value found by DDS.

Taking Tables~\ref{table:hyper_high_deg}--\ref{table:hyper_low_deg_low_acc} in consideration, our conclusion is that: $(a)$ in most cases Algorithm~\ref{alg:proposed_method} indeed succeeds in getting low-to-medium accuracy solutions in a reasonable time; $(b)$ Algorithm~\ref{alg:proposed_method} seems to be faster than Renegar's AGM; $(c)$ when the hyperbolic polynomial has many monomials and/or is of higher degree, FW EleSym tends to be significantly better than FW.
A caveat is that, as mentioned previously, our implementation of Renegar's AGM is sequential rather than parallel, but even taking into consideration a parallel speed-up factor of 2, our approach still seems to outperform Renegar's AGM by a large margin.

On the other hand, it seems that Algorithm~\ref{alg:proposed_method} is indeed quite sensitive to the precision of eigenvalue and conjugate vector computations. 
In particular, for high degree hyperbolic polynomials  careful  implementation of  the eigenvalue computation routines is important. 

Finally, for the experiments described in Tables~\ref{table:hyper_high_deg} and \ref{table:hyper_low_deg} we provide extra statistics about the FW and FW EleSym approaches, see Tables~\ref{table:hyper_high_deg_det} and \ref{table:hyper_low_deg_det} respectively.
In those tables, we provide the average number of iterations and the absolute time required to reach each error target.
Also, in the column ``$\norm{x_k - x_{\text{DDS}}}_\infty$'' we  provide the average of the distances (in the infinity norm) to the solutions obtained by the DDS solver.
For example, in Table~\hyperref[table:hyper_hig_deg_10_1_det]{\ref*{table:hyper_high_deg_det}\subref*{table:hyper_hig_deg_10_1_det}}, in the block associated to FW, we have ``$7.7\pm2.9$'' in the  first row  of the ``Iterations'' column. 
This indicates that considering all the $30$ points generated in Table~\ref{table:hyper_hig_deg_10_1}, FW required an average of $7.7$ iterations to reach a solution with $E = 10\%$ and the standard deviation was $2.9$. The average absolute time in seconds to obtain such a solution was $2.49\times 10^{-3}$ and the standard deviation was   $8.14\times 10^{-4}$. 
Here we use ``absolute time'' to distinguish from the relative time
described previously.
Measuring the distance to the solution obtained by the DDS solver we obtained an average of $2.58 \times 10^{-2}$ and standard deviation of $1.92 \times 10^{-2}$.

As the error threshold decreases, the number of iterations and the absolute time both increase, as expected. 
The most notable phenomenon is that for polynomials of small degree a very small number of iterations is enough to get solutions with $E=0.1\%$ or less, as indicated
in Table~\ref{table:hyper_low_deg_det}.
At this moment, we are not aware of a deeper theoretical explanation for that.

\begin{table}\small
\begin{subtable}{\textwidth}
\csvreader[hyperExp]{experiments/proj_hyper_n10_d1_30_tol_high.csv}
{}{\printExpDataLine }
\caption{$n=10$, $k=1$}\label{table:hyper_hig_deg_10_1}
\end{subtable}
\begin{subtable}{\textwidth}
	{\csvreader[hyperExp]{experiments/proj_hyper_n10_d2_30_tol_high.csv}
		{}{\printExpDataLine
		}
	}\caption{$n=10$, $k=2$}\label{table:hyper_hig_deg_10_2}
\end{subtable}
\begin{subtable}{\textwidth}
	{\csvreader[hyperExp]{experiments/proj_hyper_n20_d1_30_tol_high.csv}
		{}{\printExpDataLine
		}
	}\caption{$n=20$, $k=1$}\label{table:hyper_hig_deg_20_1}
\end{subtable}
\begin{subtable}{\textwidth}
	{\csvreader[hyperExp]{experiments/proj_hyper_n20_d2_30_tol_high.csv}
		{}{\printExpDataLine
		}
	}\caption{$n=20$, $k=2$}\label{table:hyper_hig_deg_20_2}
\end{subtable}	
\caption{All experiments were done with $30$ randomly generated points. The polynomials in the experiments described here have degrees $9,8,19,18$, respectively. 
A bold entry in a row indicates the method with best mean relative time among the ones that were $100\%$ successful. Averages and standard deviations are computed using successful points only.
More detailed experimental data regarding FW and  FW EleSym is given in Table~\ref{table:hyper_high_deg_det}.}\label{table:hyper_high_deg}		
\end{table}



\begin{table}\footnotesize
\begin{DIFnomarkup}
	\setcounter{subtable}{0}
	\begin{subtable}{\textwidth}
		\csvreader[hyperExpDet]{experiments/proj_hyper_n10_d1_30_tol_high.csv}
		{}{\printExpDataLineDet }
		\caption{$n=10$, $k=1$}\label{table:hyper_hig_deg_10_1_det}
	\end{subtable}
	\begin{subtable}{\textwidth}
		{\csvreader[hyperExpDet]{experiments/proj_hyper_n10_d2_30_tol_high.csv}
			{}{\printExpDataLineDet
			}
		}\caption{$n=10$, $k=2$}\label{table:hyper_hig_deg_10_2_det}
	\end{subtable}
	\begin{subtable}{\textwidth}
		{\csvreader[hyperExpDet]{experiments/proj_hyper_n20_d1_30_tol_high.csv}
			{}{\printExpDataLineDet
			}
		}\caption{$n=20$, $k=1$}\label{table:hyper_hig_deg_20_1_det}
	\end{subtable}
	\begin{subtable}{\textwidth}
		{\csvreader[hyperExpDet]{experiments/proj_hyper_n20_d2_30_tol_high.csv}
			{}{\printExpDataLineDet
			}
		}\caption{$n=20$, $k=2$}\label{table:hyper_hig_deg_20_2_det}
	\end{subtable}
\end{DIFnomarkup}	
	\caption{Additional data regarding the experiments in Table~\ref{table:hyper_high_deg} and the setting is the same as in Table~\ref{table:hyper_high_deg}. 
	Rows for which the success rate of both FW and FW EleSym are less than $15\%$ are omitted. Greyed out entries correspond to the cases where success rates were less than $90\%$.
	 All averages and standard deviations are computed using successful instances only.  }\label{table:hyper_high_deg_det}		
\end{table}

\begin{table}\small
\begin{subtable}{\textwidth}
	\csvreader[hyperExp]{experiments/proj_hyper_n30_d27_30_tol_high.csv}
	{}{\printExpDataLine
	}
	\caption{$n=30$, $k=27$}\label{table:hyper_low_deg_30_27}
\end{subtable}
\begin{subtable}{\textwidth}
\csvreader[hyperExp]{experiments/proj_hyper_n40_d37_30_tol_high.csv}
{}{\printExpDataLine
}
\caption{$n=40$, $k=37$}\label{table:hyper_low_deg_40_37}
\end{subtable}
\begin{subtable}{\textwidth}
	\csvreader[hyperExp]{experiments/proj_hyper_n50_d47_30_tol_high.csv}
	{}{\printExpDataLine
	}
	\caption{$n=50$, $k=47$}\label{table:hyper_low_deg_50_47}
\end{subtable}\caption{All experiments were done with $30$ randomly generated points. The polynomials in the experiments described here all have degree $3$. For Table~\ref{table:hyper_low_deg_40_37}, the last row (Error = 0.001\%) is not directly comparable to the previous rows as the success rates for FW and FW EleSym are much smaller and the relative time averages for each algorithm only consider successful instances.
More detailed experimental data regarding FW and  FW EleSym is given in Table~\ref{table:hyper_low_deg_det}.  }\label{table:hyper_low_deg}
\end{table}

\begin{table}\footnotesize
\begin{DIFnomarkup}
	\setcounter{subtable}{0}
	\begin{subtable}{\textwidth}
		\csvreader[hyperExpDet]{experiments/proj_hyper_n30_d27_30_tol_high.csv}
		{}{\printExpDataLineDet
		}
		\caption{$n=30$, $k=27$}\label{table:hyper_low_deg_30_27_det}
	\end{subtable}
	\begin{subtable}{\textwidth}
		\csvreader[hyperExpDet]{experiments/proj_hyper_n40_d37_30_tol_high.csv}
		{}{\printExpDataLineDet
		}
		\caption{$n=40$, $k=37$}\label{table:hyper_low_deg_40_37_det}
	\end{subtable}
	\begin{subtable}{\textwidth}
		\csvreader[hyperExpDet]{experiments/proj_hyper_n50_d47_30_tol_high.csv}
		{}{\printExpDataLineDet
		}
		\caption{$n=50$, $k=47$}\label{table:hyper_low_deg_50_47_det}
	\end{subtable}
\end{DIFnomarkup}
\caption{Additional data regarding the experiments in Table~\ref{table:hyper_low_deg}. The setting is the same as in Table~\ref{table:hyper_low_deg} and the meaning of the columns is the same as in Table~\ref{table:hyper_high_deg_det}. }\label{table:hyper_low_deg_det}
\end{table}

\begin{table}\footnotesize
	\begin{subtable}{\textwidth}
		\csvreader[hyperExp]{experiments/proj_hyper_n10_d1_30_tol_low.csv}
		{}{\printExpDataLine
		}
		\caption{$n=10$, $k=1$, 30 points}\label{table:hyper_high_deg_low_acc_10_1}
	\end{subtable}
	\begin{subtable}{\textwidth}
		{\csvreader[hyperExp]{experiments/proj_hyper_n10_d2_30_tol_low.csv}
			{}{\printExpDataLine
			}
		}\caption{$n=10$, $k=2$}
	\end{subtable}
	\begin{subtable}{\textwidth}
		{\csvreader[hyperExp]{experiments/proj_hyper_n20_d1_30_tol_low.csv}
			{}{\printExpDataLine
			}
		}\caption{$n=20$, $k=1$}\label{table:hyper_high_deg_low_acc_20_1}
	\end{subtable}
	\begin{subtable}{\textwidth}
		{\csvreader[hyperExp]{experiments/proj_hyper_n20_d2_30_tol_low.csv}
			{}{\printExpDataLine
			}
		}\caption{$n=20$, $k=2$}
	\end{subtable}	
	\caption{The setting is the same as in Table~\ref{table:hyper_high_deg} except that DDS is run with $10^{-3}$ tolerance.}\label{table:hyper_high_deg_low_acc}		
\end{table}

\begin{table}\small
	\begin{subtable}{\textwidth}
		\csvreader[hyperExp]{experiments/proj_hyper_n30_d27_30_tol_low.csv}
		{}{\printExpDataLine
		}\caption{$n=30$, $k=27$}
	\end{subtable}
	\begin{subtable}{\textwidth}
		\csvreader[hyperExp]{experiments/proj_hyper_n40_d37_30_tol_low.csv}
		{}{\printExpDataLine
		}\caption{$n=40$, $k=37$}
	\end{subtable}
	\begin{subtable}{\textwidth}
		\csvreader[hyperExp]{experiments/proj_hyper_n50_d47_30_tol_low.csv}
		{}{\printExpDataLine}\caption{$n=50$, $k=47$}
	\end{subtable}\caption{The setting is the same as in Table~\ref{table:hyper_low_deg} except that DDS is run with $10^{-3}$ tolerance.}\label{table:hyper_low_deg_low_acc}	
\end{table}

\subsubsection{Hyperbolic polynomials with many monomials}\label{sec:num_mil}
For certain choices of $n$ and $k$, the corresponding 
$(n-k)$-th elementary symmetric polynomial has hundreds of thousands of monomials.  
For example, for $(n,k) = (20,10)$ and $(n,k) = (30,15)$, $\sigma_{n,n-k}$ has, respectively, $184756$ and $155117520$ monomials.
In this subsection, our goal is to check whether our proposed algorithm can still function properly when the underlying hyperbolic polynomial has a huge number of monomials.
Here, we focus on the FW EleSym method and we recall that the ``EleSym'' suffix indicates the usage of special methods to handle elementary symmetric polynomials.
We will also use this opportunity to check the behavior of Algorithm~\ref{alg:proposed_method} over a single instance. 

The reason for focusing on FW EleSym only is that for $(n,k) = (20,10)$ and $(n,k) = (30,15)$, DDS struggles to complete a single iteration. And, from the previous discussion we saw that FW EleSym was significantly faster than either version of Renegar's AGM. 

For  $(n,k) \in \{(20,10), (30,15)\}$ we generated $10$ random instances using the same procedure as before. Then, for each instance, we ran FW EleSym for $10$ seconds and we plotted the Frank-Wolfe gap and the relative objective function value in Figures~\ref{fig:hyper_mon_20_10} and \ref{fig:hyper_mon_30_15} using log-log plots.
For each instance the relative objective function value was computed as follows: we compute the smallest function value obtained through the $10$ seconds among the primal iterates whose minimal eigenvalues were at least $-10^{-8}$. We call this value $\hat f_{\text{opt}}$. 
Then, denoting the objective function of \eqref{prob:primal} by $f$ and $k$-th primal iterate by $x_k$ the relative objective function value at the $k$-th iteration is
\[
\frac{\min_{1\leq i \leq k }f(x_i)-\hat f_{\text{opt}}}{\hat f_{\text{opt}}},
\]
with the caveat that ``$\min$'' is only considered over primal iterates whose minimal eigenvalues were at least $-10^{-8}$.
The goal is to measure empirically how fast the primal objective value is converging. 
Using $\hat f_{\text{opt}}$ may seem odd, but the issue is that we do not know the true optimal values and have no other baselines to compare  since we were not able to solve the problem with DDS. 

Both Figures \ref{fig:hyper_mon_20_10} and \ref{fig:hyper_mon_30_15} suggest that the Frank-Wolfe gap and the function values are decreasing sublinearly, which is consistent with the convergence results described in Section~\ref{sec:convergence}.
Denoting the common optimal value of \eqref{prob:dual} and \eqref{prob:dual_compact} by $d^*$, we recall that the Frank-Wolfe gap  at the $k$-th iteration is an upper bound to the difference $h(y_k) - d^*$, where $h(y_k)$ is the value of the dual objective function at the $k$-iterate. 
In view of Theorem~\ref{thm:convergence_rate}, the square root of the Frank-Wolfe gap times a constant can be used to bound the distance of the primal iterate to the
primal optimal solution. More precisely, denoting the $k$-th iterate by $x_k$, the primal optimal solution by $x_{\text{opt}}$ (i.e., the projection) and the Frank-Wolfe gap at the $k$-th iteration by $G_k$ we have 
\[
\norm{x_k-x_{\text{opt}}} \leq \sqrt{2}\sqrt{G_k},
\]
because the objective function of \eqref{prob:proj_original} is $1$-strongly convex and $T$ (as in Theorem~\ref{thm:convergence_rate}) is the identity matrix.

So the fact that in both plots the Frank-Wolfe gap is indeed decreasing for all instances, gives us some numerical confidence that Algorithm~\ref{alg:proposed_method} is indeed approaching the true optimal solution of \eqref{prob:primal} in spite of the challenging circumstances. 
This suggests that even if the hyperbolic polynomial has millions of monomials, Algorithm~\ref{alg:proposed_method} can still work properly provided that the underlying computational algebra for the polynomial is carefully implemented.
 

\begin{figure}	
	\begin{subfigure}{1\textwidth}
			\centering\includegraphics[scale=0.5]{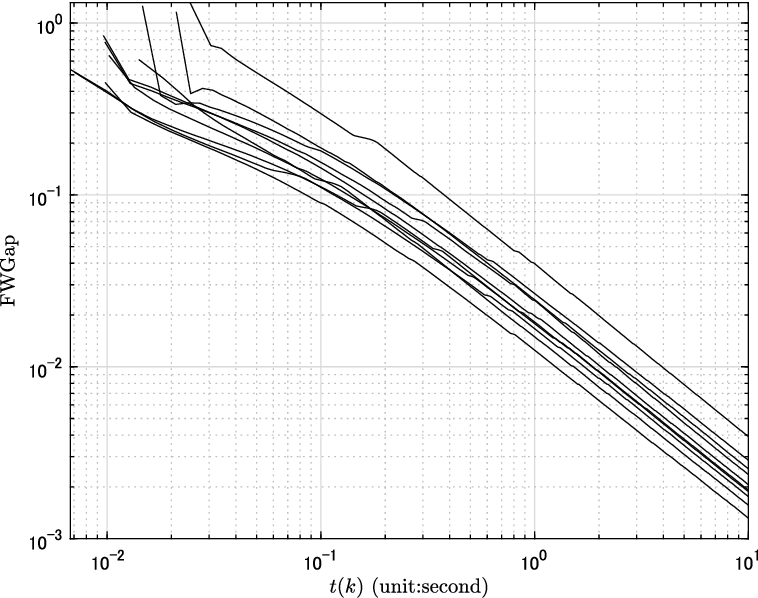}\includegraphics[scale=0.5]{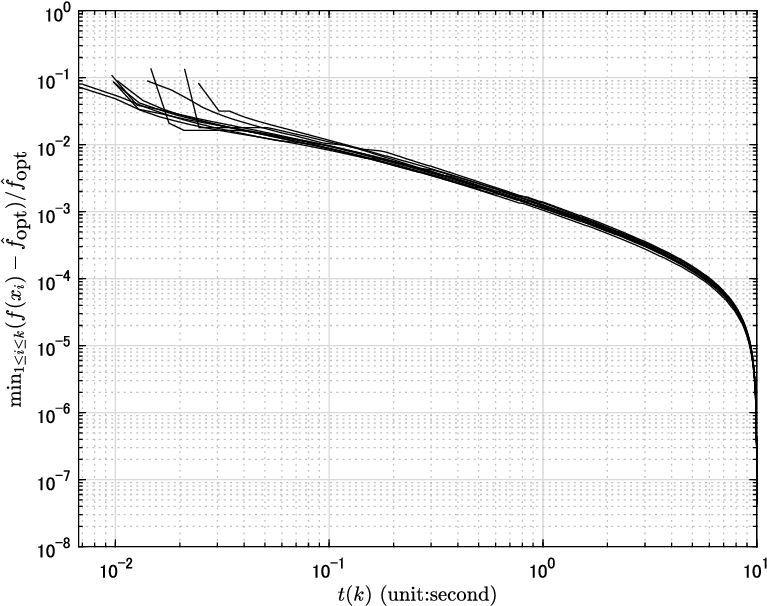}
			\caption{$n = 20$, $k = 10$. The corresponding hyperbolic polynomial has $184756$ monomials.}\label{fig:hyper_mon_20_10}
	\end{subfigure}
	\begin{subfigure}{1\textwidth}
	\centering\includegraphics[scale=0.5]{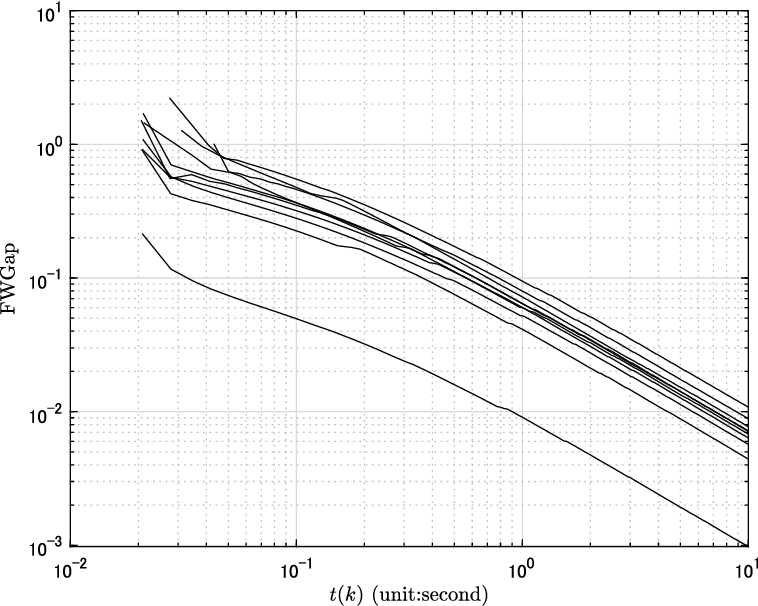}\includegraphics[scale=0.5]{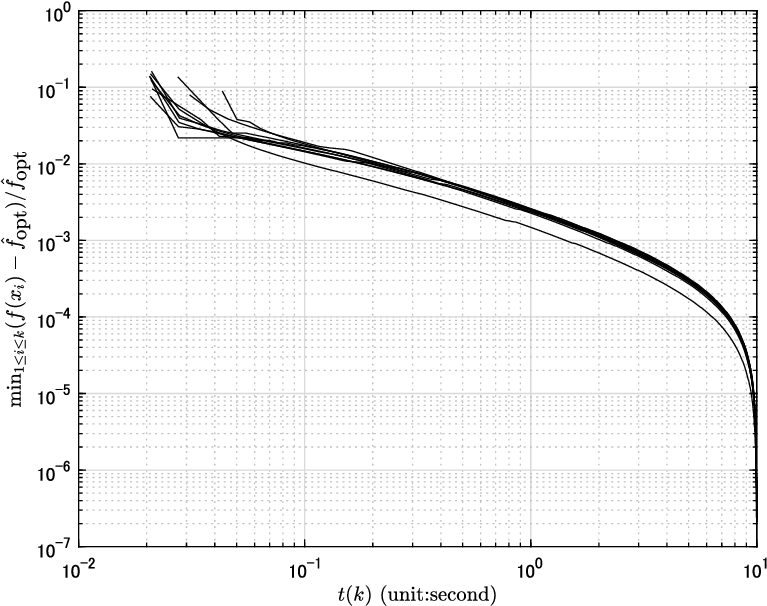}
		\caption{$n = 30$, $k = 15$. The corresponding hyperbolic polynomial has $155117520$ monomials.}\label{fig:hyper_mon_30_15}
	\end{subfigure}

\caption{Frank-Wolfe gap and relative function values log-log plots for the cases $(n,k) \in \{(20,10),(30,15)\}$,  $10$ instances each. 
	For relative function values (the plots on the right), since $\hat f_{\text{opt}}$ is the best solution obtained during the $10$ seconds, it is natural that the relative error computed empirically goes to $0$. Still, the fact that the graph is almost a straight line before that suggests that the convergence is indeed sublinear as predicted by \eqref{eq:obj_vals_convergence}. }
\end{figure}

\subsection{Projection onto $p$-cones} \label{sec:experiment_proj_p}
Algorithm~\ref{alg:proposed_method} is also applicable in more general settings so in this subsection our goal is to examine its behavior beyond the case of hyperbolicity cones. 
Here, we consider the problem of projecting a given $c \in \Re^{n+1}$ onto a $p$-cone.
\begin{align}
  \label{prob:proj_Pcone}
  \min_{x\in\Re^{n+1}} &\:\: \frac{1}{2}\| x-c \|^2 \\
  \mathrm{s.t. } & \:\: x \in \pK, \notag
\end{align}
where $\pK = \{(x_0,x_1,\ldots, x_n) \in \Re^{n+1}\mid x_0 \geq 0, x_0^p \geq \abs{x_1}^p + \cdots +\abs{x_n}^p\}$ for some $p \in (1,\infty)$.

The problem \eqref{prob:proj_Pcone} can be solved with both DDS and Mosek \cite{MC2020}. However, Mosek  does not handle $p$-norm constraints directly, so we need to reformulate 
\eqref{prob:proj_Pcone} using power cone constraints as follows. 
%
%
\begin{align*}
  x \in \stdCone_p^{n+1}
  &\iff \left\lbrace \begin{array}{l}
    x_0^p \geq \sum_{i=1}^n \abs{x_i}^p \\
    x_0 \geq 0
  \end{array}\right.\\
  &\iff \left\lbrace \begin{array}{l}
    x_0^p \geq \sum_{i=1}^n y_i x_0^{p-1} \\
    y_i x_0^{p-1} \geq |x_i|^p \qquad (i=1,\dots,n) \\
    x_0 \geq 0
  \end{array}\right.\\
  &\iff \left\lbrace \begin{array}{l}
    x_0 \geq \sum_{i=1}^n y_i  \\
    y_i^{1/p} x_0^{1-1/p} \geq |x_i| \qquad (i=1,\dots,n) \\
    x_0 \geq 0
  \end{array}\right.
\end{align*}
Using this transformation and dropping the quadratic objective function to a second-order cone constraint, \eqref{prob:proj_Pcone} is
transformed into the following equivalent problem.
\begin{align*}
  \min_{t\in\Re,x\in\Re^{n+1}} \:\: & t \\
  \mathrm{s.t. }  \:\: & t \geq \| x-c \| \\
  &x_0 \geq \sum_{i=1}^n y_i\\
  & x_0^{1-1/p}y_i^{1/p} \geq |x_i| \qquad (i=1,\dots,n)\\
  & x_0 \geq 0.
\end{align*}
In our preliminary tests, Mosek was significantly faster than DDS, so in the following experiments we only compare with Mosek.

As in Section~\ref{sec:exp_rel}, our  implementation of Algorithm~\ref{alg:proposed_method} for solving \eqref{prob:proj_Pcone} is relatively straightforward.
For more details, see the file \texttt{FW\_GCP\_exp.m}.
 The element $e$ is given by $(1,0,\ldots, 0)$ and, with that, the generalized minimum eigenvalue function is such that 
\[
\lambda_{\min}(x) = x_0 - \sqrt{\sum_{i=1}^n \abs{x_i}^p}.
\]
The computation of conjugate vectors is done using the formulae described in \cite[Section~4.1]{LLP21} and the constant $c_D$ is computed as in \eqref{eq:cd_projection}.

We follow the same experimental procedures as in Section~\ref{sec:exp_rel}. We generate $c$ by sampling from the  standard normal distribution, discarding points that are too close to the cone and repeating until $30$ points were generated.

We tested our implementation of Algorithm~2 with $p \in \{1.1,1.3,3,5\}$ and $n \in \{100,300,500,1000\}$.
The results are described in Tables~\ref{table:pcone_high} and \ref{table:pcone_low}. Table entries have the same meaning as in 
Tables~\ref{table:hyper_high_deg}--\ref{table:hyper_low_deg_low_acc}. 
 Analogously to Section~\ref{sec:comparison}, our goal was to examine how long does take it take on average to obtain a solution that has a value that is close to the one obtained by Mosek.
For example, the entry ``$2.72 \pm 1.05$'' at the column $p = 3$ at Table~\ref{table:pcone_high_1000} means that, on average over $30$ points, Algorithm~\ref{alg:proposed_method} required $2.72\%$ of the time that Mosek needed in order to find a solution whose value is within $0.5\%$ of the optimal value found by Mosek. 
As before, we only consider iterates that satisfy 
$\lambda_{\min}(x_k) \geq -10^{-8}$ and all objective function value computations are considered with respect the formulation in \eqref{prob:proj_Pcone}.
In all instances, we set the maximum running time to be equal to the time spent by Mosek.

For the experiments in Table~\ref{table:pcone_high}, Mosek was configured to run with its default accuracy settings. 
For most values of $p$ and $n$, Algorithm~\ref{alg:proposed_method} was able to obtain solutions having objective value between $1\%$ and $0.5\%$ of the value obtained by Mosek in a fraction of the time. 
The case $p = 1.1$ seems to be the most challenging and Algorithm~\ref{alg:proposed_method} often requires  at least $10\%$ of the running time Mosek to reach solutions with $E = 0.5\%$. 
The performance for the other $p$'s was better and for, say, $p = 3$, even for $n = 1000$ no more than $3\%$ of the running time of Mosek was required to reach solutions with $E = 0.5\%$.

Extra statistics for the cases $n \in \{100,1000\}$ are given in Table~\ref{table:pcone_high_det}. The meaning of the columns is analogous to the columns of Table~\ref{table:hyper_high_deg_det}.
The ``Iterations'' and ``Abs. time (sec.)'' columns contain the average and the standard deviation of the number of iterations and the absolute time required to reach each error target, respectively. 
The   ``$\norm{x_k - x_{\text{Mos}}}_\infty$'' column correspond to the average distance (in the infinity norm) to the solution obtained by Mosek. 
As expected, for each $n$ and $p$ the number of iterations and the absolute time increase as the error threshold decreases. 

We also performed experiments where Mosek is configured to run with a lower optimality threshold of $10^{-3}$, 
analogously to Tables~\ref{table:hyper_high_deg_low_acc}-\ref{table:hyper_low_deg_low_acc}\footnote{More precisely, the parameter \texttt{MSK\_DPAR\_INTPNT\_CO\_TOL\_MU\_RED} of Mosek which controls the relative complementarity gap tolerance is set to $10^{-3}$. }. 
These experiments are described in Table~\ref{table:pcone_low} and the results are largely similar to the ones reported in Table~\ref{table:pcone_high}.

Again, it should be emphasized that the goal of experimental setting described in Tables~\ref{table:pcone_high} and \ref{table:pcone_low} is to understand the trade-off between the accuracy afforded by a second-order approach and the fast iterations of a first-order method for this particular class of problems.
In this sense, Algorithm~\ref{alg:proposed_method} seems to be competitive since it consistently obtain relatively close solutions within a fraction of the time required by Mosek. 
On the other hand, it struggles to get closer than $0.1\%$ of the objective value obtained by Mosek within the allotted time budget.



\begin{table}
\begin{subtable}{\textwidth}\small
\csvreader[pconeExp]%
{experiments/proj_pcone_n100_tol_high_all.csv}{
}%
{\printpConeExpDataLine}\caption{$n = 100$}%
\end{subtable}
\begin{subtable}{\textwidth}\small
	\csvreader[pconeExp]%
	{experiments/proj_pcone_n300_tol_high_all.csv}{
	}%
	{\printpConeExpDataLine}\caption{$n = 300$}%
\end{subtable}
\begin{subtable}{\textwidth}\small
	\csvreader[pconeExp]%
	{experiments/proj_pcone_n500_tol_high_all.csv}{
	}%
	{\printpConeExpDataLine}\caption{$n = 500$}%
\end{subtable}
\begin{subtable}{\textwidth}\small
	\csvreader[pconeExp]%
	{experiments/proj_pcone_n1000_tol_high_all.csv}{
	}%
	{\printpConeExpDataLine}\caption{$n = 1000$}\label{table:pcone_high_1000}%
\end{subtable}
\caption{Mean relative times in comparison with Mosek using default accuracy for $ p\in \{1.1,1.3,3,5\}$ and $n \in \{100,300,500,1000\}$. We wrote in bold the entries that correspond to the cases where Algorithm~\ref{alg:proposed_method} has mean relative time less than $15\%$ and the success rate is $100\%$. More detailed experimental data regarding the $n \in \{100,1000\}$ cases is given in Table~\ref{table:pcone_high_det}.
}\label{table:pcone_high}
\end{table}

	\begin{table}
\begin{DIFnomarkup}%
	\setcounter{subtable}{0}
	\begin{subtable}{\textwidth}\small
		\csvreader[pconeExpDet]%
		{experiments/proj_pcone_n100_tol_high_all.csv}{
		}%
		{\printpConeExpDataLineDet}\caption{$n = 100$}%
	\end{subtable}
	\begin{subtable}{\textwidth}\small
		\csvreader[pconeExpDetAlt]%
		{experiments/proj_pcone_n100_tol_high_all.csv}{
		}%
		{\printpConeExpDataLineDetAlt}\caption{$n = 100$}%
	\end{subtable}
	\begin{subtable}{\textwidth}\small
		\csvreader[pconeExpDet]%
		{experiments/proj_pcone_n1000_tol_high_all.csv}{
		}%
		{\printpConeExpDataLineDet}\caption{$n = 1000$}%
	\end{subtable}
	\begin{subtable}{\textwidth}\small
		\csvreader[pconeExpDetAlt]%
		{experiments/proj_pcone_n1000_tol_high_all.csv}{
		}%
		{\printpConeExpDataLineDetAlt}\caption{$n = 1000$}%
	\end{subtable}
\end{DIFnomarkup}%
	\caption{Additional data regarding the experiments in Table~\ref{table:pcone_high}, for $n \in \{100,1000\}$. The experimental setting is the same as in Table~\ref{table:pcone_high}. 
		Greyed out entries correspond to the cases where success rates were less than $90\%$.
		All averages and standard deviations are computed using successful instances only. 	
}\label{table:pcone_high_det}
\end{table}


\begin{table}
	\begin{subtable}{\textwidth}\small
		\csvreader[pconeExp]%
		{experiments/proj_pcone_n100_tol_low_all.csv}{
		}%
		{\printpConeExpDataLine}\caption{$n = 100$}%
	\end{subtable}
	\begin{subtable}{\textwidth}\small
		\csvreader[pconeExp]%
		{experiments/proj_pcone_n300_tol_low_all.csv}{
		}%
		{\printpConeExpDataLine}\caption{$n = 300$}%
	\end{subtable}
	\begin{subtable}{\textwidth}\small
		\csvreader[pconeExp]%
		{experiments/proj_pcone_n500_tol_low_all.csv}{
		}%
		{\printpConeExpDataLine}\caption{$n = 500$}%
	\end{subtable}
	\begin{subtable}{\textwidth}\small
		\csvreader[pconeExp]%
		{experiments/proj_pcone_n1000_tol_low_all.csv}{
		}%
		{\printpConeExpDataLine}\caption{$n = 1000$}%
	\end{subtable}
	\caption{The setting is the same as in Table~\ref{table:pcone_high} except that Mosek is run with $10^{-3}$ complementarity gap tolerance.}\label{table:pcone_low}
\end{table}

\section{Conclusion}\label{sec:conc}
The initial motivation for this paper was the problem of computing projections onto hyperbolicity cones. We explored this question from both theoretical and numerical perspectives. 
As seen in Section~\ref{sec:projection}, there are limits to what can be done for arbitrary hyperbolicity cones and formulae analogous to the ones for the positive semidefinite cone are only available in certain special cases (Propositions~\ref{prop:dist_hyperbolicity}, \ref{prop:projection} and \ref{prop:proj}). In particular, the closest analogue (Proposition~\ref{prop:proj}) requires that the underlying polynomial be complete, diagonalizable and satisfy a special condition on the eigenvalue map.
In face of these limitations, we also proposed an algorithm based on the classic Frank-Wolfe method for computing projections, see Section~\ref{sec:algorithm}. 
In fact, our method can handle more general problems including the case where the underlying cone is not necessarily a hyperbolicity cone.

As discussed in Section~\ref{sec:algorithm}, a novel point is that the Frank-Wolfe method is actually applied to the \emph{dual problem}, since this leads to subproblems whose solutions can be expressed in terms of minimum eigenvalues computations and conjugate vectors.
In the particular case of hyperbolicity cones, we show how all the necessary 
objects are computable from the underlying hyperbolic polynomial.
Then, in Section~\ref{sec:numerical_experiment} we presented some numerical experiments that suggest that our approach has a better performance than an earlier algorithm proposed by Renegar \cite{renegar2019accelerated}. 
We also compared against interior point methods. 
As expected, IPMs excel at getting accurate solutions but we found that our approach was often able to obtain close enough solutions with a fraction of the running time. 

Still, there are a few outstanding issues that we believe could be addressed in future works. In particular, there have been many interesting works regarding Frank-Wolfe method and variants, including nonconvex extensions \cite{LT13,BPT20,ZZLP21}. In particular, one of the most common ways to improve Frank-Wolfe methods is via the so-called \emph{away steps} and it could be interesting to try to port some techniques to our setting by making use of the geometric properties of hyperbolicity cones and their duals. 
The modifications described in \cite{FGM17}, for instance, could be promising, but they seem to require a deep knowledge of the facial structure of the underlying convex set, which may present a challenge for a set obtained by taking a compact slice of the {dual} of a hyperbolicity cone as we do in our approach.

Finally, a limitation of our approach is the requirement that a constant 
$c_D$ (see Assumption~\ref{asm:optimal_solution_bound}) is known. 
To conclude this paper, we offer some thoughts on this point.
\paragraph{When $c_D$ in Assumption~\ref{asm:optimal_solution_bound} is unknown}
Suppose that $\stdCone$ is a regular convex cone and $e \in \interior \stdCone$ is arbitrary.
Suppose also that \eqref{prob:dual} has an optimal solution.
Since  $\inProd{e}{y} > 0$ always holds for $ y \in \stdCone^*\setminus\{0\}$, for any such problem, there is always some $c_D$ for which Assumption~\ref{asm:optimal_solution_bound} is satisfied.
As we saw in Section~\ref{sec:example}, $c_D$ is readily available when minimizing a positive definite quadratic function. But 
suppose that we have a problem for which $c_D$ is not available.

Then, one can start with any $e \in \interior \stdCone$, an initial guess for $c_D$, say, $c_D^0 \coloneqq 1$ and run Algorithm~\ref{alg:proposed_method} with $c_D^0$ and $e$.
Let $\bar{x}^0$ and $\bar{y}^0$ denote the output of the algorithm.
As remarked in the discussion about the stopping criteria, if $c_D^0$ is large enough so that Assumption~\ref{asm:optimal_solution_bound} is satisfied, then $\bar{x}^0$ and $\bar{y}^0$  should be close to being zero duality gap pairs of optimal solutions to \eqref{prob:primal} and \eqref{prob:dual}.
On the other hand, if $c_D^0$ is too small, then either $\bar{x}^0$ is far from being feasible to \eqref{prob:primal} or the sum of the objective values associated to $\bar{x}^0$ and $\bar{y}^0$ has large absolute value or both phenomena happen at the same time. 
In that case, we may increase $c_D^0$ by, say, setting $c_D^{1} \coloneqq 2c_D^0$ and try Algorithm~\ref{alg:proposed_method} again with $c_D^1$ and $e$ in order to obtain new solutions $\bar{x}^1$ and $\bar{y}^1$.

The summary of this discussion is that, in theory, one could handle problems for which $c_D$ is unknown by repeatedly invoking Algorithm~\ref{alg:proposed_method} with increasingly larger guesses of $c_D$  and stopping when the obtained solutions are sufficiently close to being primal-dual optimal for \eqref{prob:primal} and \eqref{prob:dual}.
This, of course, would require a very careful calibration of the stopping criterion in Algorithm~\ref{alg:proposed_method}. Although we did not explore this possibility in this paper, this might be an interesting future direction to consider.

This also raises the question of the behavior of Algorithm~\ref{alg:proposed_method} if $c_D$ is large. 
As long as Assumptions~\ref{asm:optimal_solution_bound} and \ref{asm:a} are satisfied, the convergence results in Section~\ref{sec:convergence} still hold. 
However, the convergence rates in \eqref{eq:seqencial_convergence} and \eqref{eq:obj_vals_convergence} are obtained under the assumption that the step-size is chosen as to ensure that 
$h(y_k) - d^* = O(1/k)$ holds.
Hidden in the big O notation there are constants that depend on the \emph{diameter} of the feasible solution set of \eqref{prob:dual_compact}, e.g., see the proof of Theorem~6.1 in \cite{levitin1966constrained} or \cite[Theorem~1]{bomze2021frank}.
This suggests that, in theory, taking an overly large $c_D$ may have an adverse effect on the running time. 
However, for the problems discussed in Section~\ref{sec:exp_rel}, even increasing in $100$ times the value of $c_D$ used therein did not significantly degrade the performance, see more details in Appendix~\ref{app:cd}. This suggests that, in practice, 
the actual influence of the value of $c_D$ may be quite problem dependent.

\subsection*{Acknowledgements}
We thank the referees and the associate editor for their  comments, which helped to improve the paper.

\bibliographystyle{abbrv} 
\bibliography{bib.bib}

\appendix
\section{Omitted proofs}\label{app:proof}

\subsection{Proof of Lemma~\ref{lem:iso}}\label{app:lem_iso}
\begin{proof}
Let $z$ be such that $\dist(\lambda^{-1}(u),x) = \norm{z-x}_p$ and $\lambda(z) = u$. By the isometric property, there exists $y$ such that $\lambda(y) = \lambda(z) = u$ and $\lambda(y+x) = \lambda(y)+\lambda(x)$.
Simplifying the equality $\norm{y+x}_p^2 = \norm{\lambda(y+x)}_2^2 = \norm{\lambda(y)+\lambda(x)}_2^2$ leads to
\begin{equation}\label{eq:inner}
\inProd{x}{y}_p = \inProd{\lambda(x)}{\lambda(y)}_2 = \inProd{\lambda(x)}{\lambda(z)}_2.
\end{equation}
Recalling that $\inProd{z}{x}_p \leq \inProd{\lambda(z)}{\lambda(x)}_2$ holds (see \cite[Proposition~4.4]{BBEGG10}), we have
\begin{align*}
\dist(\lambda^{-1}(u),x)^2 & = \norm{z-x}_p^2 \\
&= \norm{z}_p^2 - 2\inProd{z}{x}_p + \norm{x}_p^2 \\
& \geq \norm{z}_p^2 - 2\inProd{\lambda(z)}{\lambda(x)}_2 + \norm{x}_p^2\\
& = \norm{y}_p^2 - 2\inProd{\lambda(y)}{\lambda(x)}_2 + \norm{x}_p^2,
\end{align*}
where the last equality follows from $\norm{y}_p^2 = \norm{\lambda(y)}_2^2 = \norm{\lambda(z)}_2^2 = \norm{z}_p^2$ and $\lambda(y) = \lambda(z)$. Then, in view of \eqref{eq:inner}, we 
obtain
\begin{equation*}
\dist(\lambda^{-1}(u),x)^2 \geq \norm{x-y}_p^2.
\end{equation*}
Since   $\lambda(y)=\lambda(z) = u$ holds we have in fact $\dist(\lambda^{-1}(u),x)^2 = \norm{x-y}_p^2$. Recalling \eqref{eq:inner}, this leads to 
\[
\dist(\lambda^{-1}(u),x)^2 = \norm{x-y}_p^2 = \norm{\lambda(y)}_2^2 - 2\inProd{\lambda(y)}{\lambda(x)}_2 + \norm{\lambda(x)}_2^2 = \norm{\lambda(x)-\lambda(y)}_2^2 = \dist(u,\lambda(x))^2.
\]
\end{proof}

\subsection{Proof of Proposition~\ref{prop:non-isometric}}\label{app:non_iso}
Let
\begin{equation}
  p(x) \coloneqq (x_1+x_2+x_3)(x_1-x_2+x_3)(2x_1-x_2-x_3)(x_1+2x_2-x_3).
\end{equation}
We start with the following lemma.

\begin{lemma}\label{lem:piso}
The polynomial $p$ is hyperbolic with respect to $e \coloneqq (0,0,1)$, but is not isometric.
\end{lemma}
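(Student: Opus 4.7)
The statement has three components: (i) hyperbolicity of $p$ along $e=(0,0,1)$, (ii) that the associated hyperbolicity cone is the stated polyhedral cone, (iii) that $p$ is minimal for $\Lambda(p,e)$, and (iv) that $p$ is not isometric. Parts (i)--(iii) are essentially routine; the interesting part will be (iv).

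For (i) and (ii), the plan is to substitute: writing $L_1(x)=x_1+x_2+x_3$, $L_2(x)=x_1-x_2+x_3$, $L_3(x)=-2x_1+x_2+x_3$, $L_4(x)=-x_1-2x_2+x_3$, one checks that $L_1(x-te)=L_1(x)-t$, $L_2(x-te)=L_2(x)-t$, $L_3(x-te)=L_3(x)+t$, $L_4(x-te)=L_4(x)+t$, so
\[
p(x-te) = (L_1(x)-t)(L_2(x)-t)(L_3(x)+t)(L_4(x)+t).
\]
All roots of $t\mapsto p(x-te)$ are real for every $x$, and $p(e)=1\neq 0$, so $p$ is hyperbolic along $e$. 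Since $e\in\interior\{L_i\ge 0,\ i=1,\dots,4\}$, the hyperbolicity cone $\Lambda(p,e)$ is exactly the polyhedral cone claimed (the connected component of $\{p\neq 0\}$ containing $e$).

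For (iii), the four normals $(1,1,1)$, $(1,-1,1)$, $(-2,1,1)$, $(-1,-2,1)$ are pairwise non-proportional, so each $L_i\ge 0$ defines a distinct facet of $\Lambda$ and the description is irredundant. Any hyperbolic polynomial $\hat p$ generating $\Lambda$ must vanish on each facet; since each $L_i$ is a degree--one irreducible polynomial in $\Re[x_1,x_2,x_3]$, the Nullstellensatz (or, more concretely, the fact that a polynomial vanishing on a hyperplane is divisible by its defining linear form) forces $L_i\mid \hat p$ for $i=1,\dots,4$. Hence $p\mid \hat p$, i.e., $p$ is minimal.

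The main part will be (iv), non-isometry. The plan is to exhibit an explicit pair $y,z$ for which no compatible $x$ exists. I would try $z=(1,0,0)$ and $y=(1,1,0)$; a direct computation gives $\lambda(z)=(1,1,-1,-2)$, $\lambda(y)=(2,0,-1,-3)$, so the target eigenvector is $\lambda(z)+\lambda(y)=(3,1,-2,-5)$. The set $\lambda^{-1}(\lambda(z))$ can be enumerated by considering the $4!/2!=12$ ways to assign the multiset $\{1,1,-1,-2\}$ to the four linear forms $L_1,\dots,L_4$ and solving the resulting $4\times 3$ linear system for $x=(a,b,c)$. The hard part is a small but careful case analysis: one checks that exactly two assignments are consistent, yielding $x=(1,0,0)$ and $x=(0,1,0)$. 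For each, a direct computation of $\lambda(x+y)$ gives $(3,1,-3,-4)$ and $(3,0,-1,-5)$ respectively, neither of which equals $(3,1,-2,-5)$. Therefore no $x$ satisfies simultaneously $\lambda(x)=\lambda(z)$ and $\lambda(x+y)=\lambda(x)+\lambda(y)$, so $p$ is not isometric in the sense of Definition~\ref{def:isometric}.
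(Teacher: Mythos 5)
Your argument is correct and follows the same high-level strategy as the paper: exhibit a pair $(y,z)$ violating Definition~\ref{def:isometric} by enumerating $\lambda^{-1}(\lambda(z))$ explicitly and checking that no element is compatible with $y$. Two details differ. You use $z=(1,0,0)$, $y=(1,1,0)$ while the paper uses $z=(3,1,0)$, $y=(-1,0,0)$; the paper's choice has the minor advantage that $\lambda^{-1}(\lambda(z))$ turns out to be the singleton $\{z\}$, so the final step is a single computation, whereas yours gives the two-element preimage $\{(1,0,0),(0,1,0)\}$ requiring two checks at the end (both of your preimage computations and final eigenvalue checks are correct; I verified them). Your enumeration technique is also different: you run through the $4!/2!=12$ assignments of the multiset $\{1,1,-1,-2\}$ to the four linear forms and test consistency of the resulting $4\times 3$ systems, whereas the paper exploits the repeated eigenvalue to reduce to six cases ``$r_i(w)=r_j(w)$'' with sign subcases on $w_1,w_2$; both are valid, and your approach becomes especially clean if one first records the single linear dependency $7r_1-9r_2-4r_3+6r_4=0$. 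One small sign slip: with your definitions $L_3(x)=-2x_1+x_2+x_3$ and $L_4(x)=-x_1-2x_2+x_3$, every $L_i$ satisfies $L_i(e)=1$, hence $L_i(x-te)=L_i(x)-t$ for \emph{all} $i$, not $L_i(x)+t$ for $i=3,4$; fortunately this does not propagate, since your explicit eigenvalue lists use the correct convention. Lastly, note that Lemma~\ref{lem:piso} only asserts hyperbolicity and non-isometry; the cone description and minimality (your parts (ii) and (iii)) belong to the surrounding Proposition~\ref{prop:non-isometric} and are established separately there.
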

\begin{proof}
The roots of $p(x-te)$ are
  \begin{alignat*}{2}
    &r_1(x) = x_1+x_2+x_3, &&r_2(x) = x_1-x_2+x_3, \\
    &r_3(x) = -2x_1+x_2+x_3, \:\:&&r_4(x) = -x_1-2x_2+x_3, 
  \end{alignat*}
which are all real for $x\in \Re^3$.
Since $p(e) > 0$, $p$ is hyperbolic along $e$.

To prove $p$ is not isometric, we show that, for $z = (3,1,0)$ and $y=(-1,0,0)$, there is no $w\in \mathbb{R}^3$
such that $\lambda(w) = \lambda(z)$ and $\lambda(w+y) = \lambda(w) + \lambda(y)$.
First we show that
\begin{equation*}
  \lambda(w) = \lambda(z) \Rightarrow w = z.
\end{equation*}
Let $w \in \Re^3$ be such that $\lambda(w) = \lambda(z)$ holds.
We start by observing that $\lambda(z) = (4,2,-5,-5)$, so $z$ has an eigenvalue of multiplicity two. Therefore, if $\lambda(w) = \lambda(z)$, at least two of $r_1(w),r_2(w),r_3(w),r_4(w)$ must be the same.
We consider all possible cases.
\begin{enumerate}[$(i)$]
  \item \fbox{$r_1(w)=r_2(w)$.} This case happens if and only if $w_2  = 0$.
  We have the following subcases.
  \begin{enumerate}
    \item If $w_1>0$, then $r_1(w) = r_2(w) > r_4(w) > r_3(w)$ holds, i.e., the two largest eigenvalues of $w$ are equal. Therefore, $\lambda(w)$ can not be $\lambda(z)$, because the two smallest components of $\lambda(z)$ are equal.
    \item If $w_1=0$, then $r_1(w) = r_2(w) = r_3(w) = r_4(w)$ holds. Similarly, $\lambda(w)$ can not be $\lambda(z)$.
    \item If $w_1<0$, then $r_3(w) > r_4(w) > r_2(w) = r_1(w)$ holds. Therefore,\begin{equation*}
      \lambda(w) = \lambda(z) \Rightarrow \left\lbrace\begin{array}{l}
        r_3(w) = 4\\
        r_4(w) = 2\\
        r_2(w) = r_1(w) = -5
      \end{array}\right.
    \end{equation*}
    However, there does not exist $w$ which satisfies these equalities.
  \end{enumerate}
  \item \fbox{$r_1(w)=r_3(w)$.} This case happens if and only if $w_1 = 0$. We have the following subcases.
  \begin{enumerate}
    \item If $w_2>0$, then $r_1(w) = r_3(w) > r_2(w) > r_4(w)$ holds. Therefore, $\lambda(w)$ can not be $\lambda(z)$.
    \item If $w_2=0$, then $r_1(w) = r_2(w) = r_3(w) = r_4(w)$ holds. Therefore, $\lambda(w)$ can not be $\lambda(z)$.
    \item If $w_2<0$, then $r_4(w) > r_2(w) > r_3(w) = r_1(w)$ holds. Therefore,
    \begin{equation*}
      \lambda(w) = \lambda(z) \Rightarrow \left\lbrace\begin{array}{l}
        r_4(w) = 4\\
        r_2(w) = 2\\
        r_1(w) = r_3(w) = -5
        \end{array}\right.
    \end{equation*}
    However, there does not exist $w$ which satisfies these equalities.
  \end{enumerate}
  \item \fbox{$r_1(w)=r_4(w)$.} This case happens if and only if $2w_1=-3w_2$. We have the following subcases.
  \begin{enumerate}
    \item If $w_1>0$, then $r_2(w) > r_1(w) = r_4(w) > r_3(w)$ holds. Therefore, $\lambda(w)$ can not be $\lambda(z)$.
    \item If $w_1=0$, then $r_1(w) = r_2(w) = r_3(w) = r_4(w)$ holds. Therefore, $\lambda(w)$ can not be $\lambda(z)$.
    \item If $w_1<0$, then $r_3(w) > r_4(w) = r_1(w) > r_2(w)$ holds. Therefore, $\lambda(w)$ can not be $\lambda(z)$.
  \end{enumerate}
  \item \fbox{$r_2(w)=r_3(w)$.} This case happens if and only if $3w_1= 2w_2$. We have the following subcases.
  \begin{enumerate}
    \item If $w_1>0$, then $r_1(w) > r_2(w) = r_3(w) > r_4(w)$ holds. Therefore, $\lambda(w)$ can not be $\lambda(z)$.
    \item If $w_1=0$, then $r_1(w) = r_2(w) = r_3(w) = r_4(w)$ holds. Therefore, $\lambda(w)$ can not be $\lambda(z)$.
    \item If $w_1<0$, then $r_4(w) > r_3(w) = r_2(w) > r_1(w)$ holds. Therefore, $\lambda(w)$ can not be $\lambda(z)$.
  \end{enumerate}
  \item \fbox{$r_2(w)=r_4(w)$.} This case happens if and only if $2w_1= -w_2$. We have the following subcases.
  \begin{enumerate}
    \item If $w_1>0$, then $r_2(w) = r_4(w) > r_1(w) > r_3(w)$ holds. Therefore, $\lambda(w)$ can not be $\lambda(z)$.
    \item If $w_1=0$, then $r_1(w) = r_2(w) = r_3(w) = r_4(w)$ holds. Therefore, $\lambda(w)$ can not be $\lambda(z)$.
    \item If $w_1<0$, then $r_3(w) > r_1(w) > r_2(w) = r_4(w)$ holds. Therefore,
    \begin{equation*}
    \lambda(w) = \lambda(z) \Rightarrow \left\lbrace\begin{array}{l}
      r_3(w) = 4\\
      r_1(w) = 2\\
      r_2(w) = r_4(w) = -5
    \end{array}\right.
  \end{equation*}
  However, there does not exist $w$ which satisfies these equalities. 
  \end{enumerate}
  \item \fbox{$r_3(w)=r_4(w)$.} This case happens if and only if $w_1 = 3w_2$. We have the following subcases.
  \begin{enumerate}
    \item If $w_1>0$, then $r_1(w) > r_2(w) > r_3(w) = r_4(w)$ holds. Therefore,
    \begin{equation*}
    \lambda(w) = \lambda(z) \Rightarrow \left\lbrace\begin{array}{l}
      r_1(w) = 4\\
      r_2(w) = 2\\
      r_3(w) = r_4(w) = -5
    \end{array}\right.
    \iff \left\lbrace\begin{array}{l}
      w_1=3\\
      w_2=1\\
      w_3=0
    \end{array}\right.
    \iff w=z
    \end{equation*}
    \item If $w_1=0$, then $r_1(w) = r_2(w) = r_3(w) = r_4(w)$ holds. Therefore, $\lambda(w)$ can not be $\lambda(z)$.
    \item If $w_1<0$, then $r_4(w) = r_3(w) > r_2(w) > r_1(w)$ holds. Therefore, $\lambda(w)$ can not be $\lambda(z)$.
  \end{enumerate}
\end{enumerate}
The summary of all the six cases and subcases above is that the sole possibility for $\lambda(w) = \lambda(z)$ is case $(vi).(a)$ where we have $w=z$. 
That is, 
\begin{equation*}
    \lambda(w) = \lambda(z) \Rightarrow w = z.
\end{equation*}
  Moreover, $(3,1,-3,-4) =\lambda(z+y)$ is different from  $\lambda(z) + \lambda(y) = (6, 3, -6, -6)$. Therefore, $p$ is not isometric.
\end{proof}

\begin{proof}[Proof of Proposition~\ref{prop:non-isometric}]
	By Lemma~\ref{lem:piso}, $p$ is hyperbolic along $e$ and is not isometric. It remains to check that  $p$ is minimal and the corresponding hyperbolicity cone is as described in the statement of the proposition.
	
		The roots of $p(x-te)$ are
	\begin{align*}
	r_1(x) = x_1+x_2+x_3, \qquad & r_2(x) = x_1-x_2+x_3, \\
	r_3(x) = -2x_1+x_2+x_3,\qquad & r_4(x) = -x_1-2x_2+x_3. 
	\end{align*}
	In order for $x$ to belong to $\Lambda(p,e)$ all the roots must be nonnegative. This gives the expression for $\Lambda(p,e)$ in the statement of the proposition.
	
	Next, let $q$ be a minimal hyperbolic polynomial for $\Lambda(p,e)$ so that 
	$q$ divides $p$ and $\Lambda(p,e) = \Lambda(q,e)$ holds. Since $p$ is a product of four degree $1$ polynomials, $q$  must be a product of \emph{some} of those 
	four polynomials. Suppose that $p$ is not of minimal degree.
	Then, $q$ cannot have degree $4$, so, up to a constant,  it must be a product of \emph{strictly 
		less than} four degree $1$ polynomials among the ones that appear in the decomposition of $p$.
	Therefore, in order to show that $p$ is of minimal degree, we only need to argue that removing \emph{any} of the polynomials that appear in the decomposition of 
	$p$ will result in a larger cone. We do this case by case.
	
	If $q$ omits the factor $x_1+x_2+x_3$, then $\Lambda(q,e)$ contains $(-1,-1,1) \not \in \Lambda(p,e)$.
	If $q$ omits $x_1-x_2+x_3$, then $\Lambda(q,e)$ contains $(-1,1,1) \not \in \Lambda(p,e)$.
	If $q$ omits $2x_1-x_2-x_3$, then $\Lambda(q,e)$ contains $(1,-1,1) \not \in \Lambda(p,e)$.
	Finally, if $q$ omits $x_1+2x_2-x_3 $, then $\Lambda(q,e)$ contains $(1,1,2) \not \in \Lambda(p,e)$.
	
	The conclusion is that in order for $q$ to be minimal it cannot omit any of the degree $1$ factors of $p$, 
	so $q$ must have degree $4$ and $p$ is a minimal degree polynomial as well.	
\end{proof}

\subsection{Proof of Lemma~\ref{lem:yk_opt}}\label{app:proof_lemyopt}
\begin{proof}
Suppose that \eqref{eq:yk_opt} fails.
Then, there exists  an $\epsilon>0$ and  a subsequence $\{y_{k_j}\}$ for which \[\dist(y_{k_j},\OPT{Y}) \geq \epsilon\]  holds for all $j$. 
Since the feasible region of \eqref{prob:dual_compact} is compact (Proposition~\ref{prop:equivalent_compact_problem}) and all the $y_k$ are feasible, 
there exists a subsequence of $\{y_{k_j}\}$ that converges to some $\bar{y}$ that is also feasible to \eqref{prob:dual_compact}. 
This leads to $\lim _{j \to \infty }h(y_{k_j}) = h(\bar{y})  = d^*$ (by Assumption~\ref{asm:a}) and 
$\dist(\bar{y},\OPT{Y}) \geq \epsilon$, which is  a contradiction.	
\end{proof}

\section{Increasing the value of $c_D$}\label{app:cd}
In Section~\ref{sec:numerical_experiment} the value of $c_D$ was always taken to be as in \eqref{eq:cd_projection}. 
Considering the setting of Table~\ref{table:hyper_high_deg} and of Section~\ref{sec:exp_rel}, we analyse the behavior of FW EleSym when the value of $c_D$ is increased for the case where $(n,k)  \in \{(10,1), (20,2)\}$. 

Letting $\hat c_D$ denote \eqref{eq:cd_projection} we repeated
the experiments described in Tables~\ref{table:hyper_hig_deg_10_1} and \ref{table:hyper_hig_deg_20_2} using  $\hat c_D$, $2 \hat c_D$, $4 \hat c_D$, $8 \hat c_D$, $16 \hat c_D$ and $100 \hat c_D$. The results are given in Table~\ref{table:hyper_cd}. 
The meaning of the columns is the same as in Tables~\ref{table:hyper_high_deg} and \ref{table:hyper_high_deg_det}.

These results suggest that for the problems considered in Section~\ref{sec:exp_rel}, the influence of the value of $c_D$ is relatively mild, as we do not see a significant decrease of performance for larger values. 
Until $E = 0.5\%$, as $c_D$ increases, we see that the average relative time is essentially the same  or increases slightly.
However, starting from $E = 0.1\%$, we start to see some counter-intuitive behaviour in that the average of relative times and iterations seem to decrease as $c_D$ increases.
For example, for $E = 0.05\%$, if we compare 
$\hat c_D$ and $100 \hat c_D$, the averages of the relative time and iterations seem to be better for the latter case in both sets of experiments.
We should be careful in drawing conclusions because for those cases, the standard deviation is also high.
Nevertheless, it seems interesting that for $E= 0.01\%$ the success rate for $100 \hat c_D$ is higher than the success rate for $\hat c_D$.
We are not certain about the cause of this phenomenon.
A guess is that this may be related to numerical precision issues after a large number of iterations and the computation of eigenvalues and conjugate vectors, which are operations that are very susceptive to numerical problems.
In particular, in the computation of conjugate vectors, it is necessary to compute the number of zero eigenvalues (i.e., the multiplicity) of certain elements, see Corollary~\ref{col:opt_sub_hyper}.
A larger value of $c_D$ may have a regularizing effect and may help to avoid making a wrong decision on whether an eigenvalue is numerically zero or not as this is implemented by checking whether its absolute value is smaller than a certain predefined threshold.

	\begin{table}\scriptsize

	\begin{subtable}{\textwidth}
	\begin{DIFnomarkup}
		\csvreader[hyperExpcd]{experiments/proj_hyper_cd_n10_d1_30_tol_high.csv}
		{}{\printExpDatacd}
	\end{DIFnomarkup}
		\caption{$n=10$, $k=1$. The columns under ``$\hat c_D$'' correspond to the same exact experimental setting as in the ``FW EleSym'' columns of Tables~\ref{table:hyper_hig_deg_10_1} and \hyperref[table:hyper_hig_deg_10_1_det]{\ref*{table:hyper_high_deg_det}\subref*{table:hyper_hig_deg_10_1_det}}, so the relative time  averages are similar but not the same as they correspond to a rerun of the experiments. }\label{table:hyper_high_cd}
	\end{subtable}
	\begin{subtable}{\textwidth}\begin{DIFnomarkup}
		\csvreader[hyperExpcdAlt]{experiments/proj_hyper_cd_n10_d1_30_tol_high.csv}
		{}{\printExpDatacdAlt}\end{DIFnomarkup}
		\caption{$n=10$, $k=1$.}\label{table:hyper_high_cdalt}
	\end{subtable}
	
	\begin{subtable}{\textwidth}\begin{DIFnomarkup}
		\csvreader[hyperExpcd]{experiments/proj_hyper_cd_n20_d2_30_tol_high.csv}
		{}{\printExpDatacd}\end{DIFnomarkup}
		\caption{$n=20$, $k=2$. The columns under ``$\hat c_D$'' correspond to the same exact experimental setting as in the ``FW EleSym'' columns of Tables~\ref{table:hyper_hig_deg_20_2} and \hyperref[table:hyper_hig_deg_20_2_det]{\ref*{table:hyper_high_deg_det}\subref*{table:hyper_hig_deg_20_2_det}}, so the relative time  averages are similar but not the same as they correspond to a rerun of the experiments. }\label{table:hyper_high_20_2_cd}
	\end{subtable}
	\begin{subtable}{\textwidth}\begin{DIFnomarkup}
		\csvreader[hyperExpcdAlt]{experiments/proj_hyper_cd_n20_d2_30_tol_high.csv}
		{}{\printExpDatacdAlt}\end{DIFnomarkup}
		\caption{$n=20$, $k=2$.}\label{table:hyper_high_20_2_cdalt}
	\end{subtable}

	\caption{This experiment examines the effect of increasing $c_D$. The meaning of the columns is as in Tables~\ref{table:hyper_high_deg} and \ref{table:hyper_high_deg_det}. }\label{table:hyper_cd}	
\end{table}

\end{document}